\documentclass[a4paper,12pt]{amsart}
\usepackage[T1]{fontenc}
\usepackage{graphicx} 
\usepackage{amsaddr}
\usepackage{color}
\usepackage{layout}
\usepackage{enumerate}
\usepackage[mathlines]{lineno}
\usepackage[colorlinks,linkcolor=red,citecolor=blue]{hyperref}
\usepackage{mathrsfs}
\usepackage{xcomment}
\usepackage{comment}
\usepackage{mathtools}
\mathtoolsset{showonlyrefs}

\RequirePackage[normalem]{ulem} 
\RequirePackage{color}\definecolor{RED}{rgb}{1,0,0}\definecolor{BLUE}{rgb}{0,0,1} 

\usepackage{amsmath}
\usepackage{amssymb}
\usepackage{amsthm}

\newcommand{\R}{\mathbb{R}}

\newcommand{\argmax}{\text{argmax}}

\newcommand{\seq}[2][n]{(#2_{#1})_{#1\geq 1}}

\excludecomment{TAKEOUT}
\includecomment{TAKEIN}

\theoremstyle{definition}
\newtheorem{defn}{Definition}[section]
\newtheorem{remark}{Remark}[section]

\newtheorem{theo}{Theorem}[]
\newtheorem{lemma}{Lemma}[section]
\newtheorem{example}{Example}[section]
\newtheorem{proposition}{Proposition}[section]

\numberwithin{equation}{section}
\title[Stability in log-modified NLS]{
Stability in the log-modified nonlinear Schr\"odinger equation in dimension one
}
\author{Daniele Garrisi}
\address{School of Mathematical Sciences\\
University of Nottingham Ningbo China\\
199 Taikang East Road\\
Ningbo 315100\\
China}
\email{daniele.garrisi@nottingham.edu.cn}
\author{Yihan Liang}
\address{School of Mathematical Sciences\\
University of Nottingham Ningbo China\\
199 Taikang East Road\\
Ningbo 315100\\
China}
\email{smxyl1@nottingham.edu.cn}
\date{\today}
\begin{document}
\thispagestyle{empty}
\begin{abstract}
In the one-dimensional log-modified non-linear Schr\"odinger equation, 
there are finitely many normalised standing-waves, up to space translation and
phase multiplication. In the defocusing case, there is exactly one normalised standing-wave
having a prescribed mass and least energy. Both results are obtained by exploiting properties
of the mass function: the analyticity 
in the focusing case, and the monotonicity in the defocusing case.
\end{abstract}
\maketitle
\thispagestyle{empty}
\section*{Introduction}
In this work, we aim to study existence, orbital stability and uniqueness of
normalized standing-wave solutions to the log-modified non-linear Schr\"odinger equation
\begin{linenomath}
\begin{equation}
\label{eq.NLS}
i\partial_t\phi + \partial_{xx}\phi(t,x) - G'(\phi(t,x))\frac{\phi(t,x)}{|\phi(t,x)|} = 0,
\end{equation}
\end{linenomath}
where 
\begin{linenomath}
\begin{equation}
\label{eq.log-interaction}
G(s) = a|s|^p\log |s|^2.
\end{equation}
\end{linenomath}
According to \cite{CS23} and \cite{Mal19,PA16,TSK19}, the log-modified non-linear Schr\"odinger equation was introduced in the physics literature as a mean-field description of ultra-dilute quantum fluids in the spatial dimension \(n=2\). When \(p=2\),  standing-waves are known as Gaussons \cite{BM79}, and one of the proofs of their orbital stability was given by A.~Ardila in \cite{Ard16} in every dimension \(n\geq 2\).
Standing-waves are solutions to \eqref{eq.NLS} which can be written as
\begin{linenomath}
    \begin{equation}
        \label{eq.sw}
        \phi(t,x) = e^{-i\omega t}R(x) ,
    \end{equation}
\end{linenomath}
where \(\omega\in\mathbb{R}\) and \(R\in H^1(\mathbb{R}^n;\mathbb{R})\).
When \(p=2\) an essential ingredient to the proof of the orbital stability is the fact that every positive solution to 
\begin{linenomath}
    \begin{equation}
    \label{eq.log}
        \Delta R - G'(R) - \omega R = 0
    \end{equation}
\end{linenomath} 
can be written as \(R(x) = e^{\frac{1 - \omega}{a}} R_1(x + y)\), where \(R_1\) is the
unique \(H^1\) symmetric and positive solution to \eqref{eq.log} when \(\omega = 1\). 
The uniqueness of positive, radial and vanishing solution follows from 
\cite{McL93}. When \(2<p\) 
such a simple rescaling fails to exists and the problem of stability
and uniqueness of normalised waves is interesting even in the one-dimensional
case. For the particular case \(p=4\), R.~Carles and C.~Sparber \cite{CS23} proved the existence and stability of normalized standing-waves in dimension \(n=1\) via the method introduced in \cite{IK93}.

A normalized standing-wave is a solution to \eqref{eq.NLS} as in \eqref{eq.sw}
such that \(R\) is a minimum of the functional 
\begin{linenomath}
    \begin{gather}
    E\colon H^1(\mathbb{R};\mathbb{C})\to\mathbb{R}\\[0.5em]
        E(u) := \frac12\int_{-\infty}^{\infty} |u'(x)|^2 dx + \int_{-\infty}^{+\infty} G(u(x))dx
    \end{gather}
\end{linenomath}
on the constraint 
\begin{linenomath}
    \begin{gather}
S(\lambda) := \{u \in H^1(\mathbb{R},\mathbb{C})\mid   M(u)= \lambda\}\\[0.5em]
 M(u) := \int_{-\infty}^{+\infty} |u(x)|^2dx
    \end{gather}
\end{linenomath}
defined for every \(\lambda>0\). There are general and well-known
assumptions introduced in \cite{BBGM07} in dimension \(n\geq 3\)
and in \cite{GG17} in dimension one making the functional \(E\) well-defined
and bounded below on \(S(\lambda)\). Therefore, one can define
\begin{linenomath}
    \begin{equation}
        I\colon (0,+\infty)\to\mathbb{R},\quad I(\lambda) := \inf_{S(\lambda)} E.
    \end{equation}
\end{linenomath}
We define the two sets
\begin{linenomath}
    \begin{gather}
        \mathcal{G}_\lambda := \{u\in S(\lambda)\mid E(u) = 
        I(\lambda)\}\\[0.5em]
        \mathcal{G}_\lambda(u) := \{zu(\cdot +y)\mid (z,y)\in S^1\times\mathbb{R}\},
    \end{gather}
\end{linenomath}
where \(S^1\) is the set of complex numbers of modulus one. Before introducing
the main results, we define local well-posedness and stability.
\begin{defn}[Local well-posedness]
The non-linear Schr\"odinger equation is locally well-posed in \(H^1(\mathbb{R};\mathbb{C})\) if for every \(u_0\in H^1\), there exists
\(T>0\) and unique \(\phi\colon [0,T)\to H^1\) such that 
\begin{enumerate}[(i).]
\itemsep=0.3em
\item \(\phi(0,\cdot)=u_0\)
\item the map \([0,T)\ni t\to \phi(t,\cdot)\)  is 
\(C([0,T),H^1)\cap C^1([0,T),H^{-1})\)
\item for every \(t\in [0,T)\), \(\phi(t,\cdot)\) is a weak solution to 
\eqref{eq.NLS}.
\end{enumerate}
\end{defn}
When \eqref{eq.NLS} is globally well-posed, local solutions can be
extended to \((-\infty,+\infty)\), we set 
\(U_t(u) := \phi(t,\cdot)\) 
for every \(t\in\mathbb{R}\). On \(H^1(\mathbb{R};\mathbb{C})\), we consider the
inner product and the induced distance
\begin{linenomath}
    \begin{gather}
    \label{eq.inner}
        (u,v)_{H^1} := \text{Re}\int_{-\infty}^{+\infty} u(x)\overline{v(x)}dx + 
        \text{Re}\int_{-\infty}^{+\infty} u'(x)\overline{v'(x)}dx\\[0.5em]
        d(u,v) := \sqrt{(u - v,u - v)_{H^1}}.
        \end{gather}
\end{linenomath}
\begin{defn}[Stability]
A subset \(S\subseteq H^1\) is stable if 
\begin{enumerate}[(i).]
\itemsep=0.3em
\item \(U_t(S)\subseteq S\) for every \(t\geq 0\)
\item for every \(\varepsilon>0\), there
exists \(\delta>0\) such that \(d(u_0,S)<\delta\) implies 
\(d(U_t(u_0),S)<\varepsilon\) for every \(t\geq 0\).
\end{enumerate}
\end{defn}
\begin{defn}[Stability of standing-waves]
    The standing-wave \eqref{eq.sw} is stable if \(\mathcal{G}_\lambda(R)\)
    is stable.
\end{defn}
In literature, the set \(\mathcal{G}_\lambda\) is also called 
\textsl{ground state}, 
\cite{BBBM10}. The set \(\mathcal{G}_\lambda(R)\)
contains the solution of \eqref{eq.NLS} with initial value if \(u=R\),
and space translations. In fact, in \cite{CL82} it is shown that
if one omits space translations, the resulting set is never stable.
The existence of normalised standing-waves and the stability of \(\mathcal{G}_\lambda\) follows from well established techniques,
as the Concentration-Compactness Lemma of \cite{Lio84a,Lio84b} (in the version proposed
in \cite{BF14}) and the strict sub-additivity property of the function
\(I\), Lemma~\ref{lem.strict-subadditivity}. 

It is known that in the pure mass-critical power, 
\(G(s)=a|s|^{2 + \frac4n}\), \(E\) does not attain its infimum or is unbounded below \(S(\lambda)\) for every \(a\) in \(\mathbb{R}\). 
We refer to \cite[\S2]{GG23}
for a proof. In the log-modified pure power, \eqref{eq.log-interaction} however,
in the mass-critical case \(p = 2 + 4/n\) normalized waves exist
and minimizing sequences are compact up to translation as long as \(a>0\), while
they do not exist if \(a\leq 0\).
We prove this fact in dimension \(n=1\) in Lemma~\ref{lem.concentration-compactness}.
\begin{theo}
\label{thm.ground-state-log}
Suppose that \(2<p<6\) and \(a\neq 0\), or \(p=6\) and \(a>0\).
Then, \(\mathcal{G}_\lambda\) is non-empty and stable for every \(\lambda>0\).
\end{theo}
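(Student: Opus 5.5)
We follow the Cazenave--Lions scheme. The work splits into four tasks: verify the structural hypotheses of \cite{GG17} for the log-modified nonlinearity, prove that $I$ is negative and strictly subadditive, deduce compactness of minimizing sequences up to translation, and conclude stability from conservation laws together with global well-posedness.

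First, we check that $G(s)=a|s|^p\log|s|^2$ satisfies the growth assumptions of \cite{GG17}. Near $0$ we have $|G(s)|\le C|s|^{p-\epsilon}$ with $p-\epsilon>2$, so $G(s)=o(|s|^2)$. At infinity $|G(s)|\le C|s|^{p+\epsilon}$.
\begin{itemize}
\item If $a>0$, then $G(s)$ is negative only for $|s|<1$ and is bounded below, so $G^-(s)\le C|s|^{q}$ for some $2<q<6$ (for instance $q=p-\epsilon$ with $p-\epsilon<6$). The Gagliardo--Nirenberg inequality $\|u\|_q^q\le C\|u\|_2^{(q+2)/2}\|u'\|_2^{(q-2)/2}$ then gives coercivity of $E$ on $S(\lambda)$. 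This holds for every $2<p\le 6$, and is exactly why the critical case $p=6$ works when $a>0$.
\item If $a<0$ and $p<6$, the negative part is $|a||s|^p\log|s|^2$ for $|s|>1$. This is bounded by $C|s|^{p+\epsilon}$ with $p+\epsilon<6$, so it is again $L^2$-subcritical and we get coercivity.
\end{itemize}
Hence $I(\lambda)>-\infty$, and minimizing sequences are bounded in $H^1$.

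Second, we prove $I(\lambda)<0$ and strict subadditivity, i.e.\ Lemma~\ref{lem.strict-subadditivity}. To get $I(\lambda)<0$ we use the mass-preserving dilation $u_\sigma(x)=\sigma^{1/2}u(\sigma x)$, which gives
\[E(u_\sigma)=\tfrac{\sigma^2}{2}\|u'\|_2^2+\sigma^{-1}\int G(\sigma^{1/2}u)\,dx.\]
\begin{itemize}
\item If $a>0$, we let $\sigma\to0$. Since $|\sigma^{1/2}u|<1$, the potential term is negative and behaves like $\sigma^{p/2-1}\log\sigma$. This dominates $\sigma^2$ when $p/2-1<2$, and also when $p=6$ thanks to the logarithm.
\item If $a<0$, we instead use the amplitude scaling $\theta u$ with a large profile. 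We choose $u$ equal to a plateau of height $h>1$ on a long interval. Then $\int G$ is about $-|a|h^p\log h^2\cdot L$ and the mass is about $h^2L$. We fix the mass and let $h\to\infty$ with $L=\lambda/h^2$; the gradient cost $O(h^2/\ell)$ from transition layers of length $\ell$ is negligible.
\end{itemize}
Strict subadditivity $I(\lambda)<I(\mu)+I(\lambda-\mu)$ then follows from the standard scaling argument $I(\theta\lambda)<\theta I(\lambda)$ for $\theta>1$. This is the main obstacle, because $G$ is not homogeneous. We compute
\[E(\sqrt\theta u)=\theta E(u)+\tfrac{a}{1}\,\theta(\theta^{p/2-1}-1)\int|u|^p\log|u|^2+a\,\theta^{p/2}\log\theta\int|u|^p,\]
and we need the correction to be strictly negative along a minimizing sequence. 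For $a>0$ we would instead use the dilation $\theta^{1/2}u(\cdot)$, that is, $u\mapsto u(\cdot/\theta)$. This gives $E=\theta^{-1}\tfrac12\|u'\|^2+\theta\int G<\theta E(u)$ provided $\liminf\|u_n'\|_2>0$. That nondegeneracy holds because $I<0$ forces the kinetic energy to stay away from zero (if $u_n'\to0$ in $L^2$ then $u_n\to0$ in $L^\infty$, so $\int G(u_n)\to 0$ for $p>2$). For $a<0$ the same dilation trick works directly, since we have $E(u(\cdot/\theta))=\theta^{-1}K+\theta\int G\le\theta E(u)-(\theta-\theta^{-1})K$ in both signs of $a$, and this is the uniform approach we will take.

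Third, compactness. By the concentration-compactness lemma in the version of \cite{BF14}, i.e.\ Lemma~\ref{lem.concentration-compactness}:
\begin{itemize}
\item vanishing gives $\int G(u_n)\to0$ by Lions' lemma (using $G=o(s^2)$ at $0$ and subcritical growth), so $\liminf E(u_n)\ge0>I(\lambda)$, which is impossible;
\item dichotomy contradicts strict subadditivity.
\end{itemize}
Hence $u_n(\cdot+y_n)\to u$ in $H^1$, and $\mathcal G_\lambda\neq\emptyset$.

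Finally, stability. We use global well-posedness with conservation of $M$ and $E$, proved as in \cite{CS23} or via Cazenave's theory, since $G'$ is locally Lipschitz with at most polynomial growth in one dimension and $H^1\subset L^\infty$. Invariance of $\mathcal G_\lambda$ under the flow follows from conservation. If stability failed, there would be $u_{0,n}\to\mathcal G_\lambda$ and times $t_n$ with $d(U_{t_n}u_{0,n},\mathcal G_\lambda)\ge\varepsilon$. After rescaling the mass to $\lambda$, the sequence $U_{t_n}u_{0,n}$ is minimizing, so by the compactness step it converges up to translation to an element of $\mathcal G_\lambda$, which is a contradiction.
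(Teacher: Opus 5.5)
Your architecture matches the paper's, and for $a>0$ (including $p=6$) it is correct. The shared steps are: Gagliardo--Nirenberg coercivity; a negative infimum via the mass-preserving dilation for $a>0$ and via a plateau test function for $a<0$; strict subadditivity from $u\mapsto u(\cdot/\theta)$ together with a positive lower bound on the kinetic energy of minimizing sequences; concentration--compactness; and the Cazenave--Lions contradiction argument.

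The genuine gap is the claim that $I(\lambda)<0$ for \emph{every} $\lambda>0$ when $a<0$, and it cannot be repaired. In your plateau construction the transition layers are not negligible. They carry mass of order $h^2\ell$, so $\ell\lesssim\lambda/h^2$, and their kinetic cost is therefore at least of order $h^4/\lambda$. The plateau itself gains only about $|a|\lambda h^{p-2}\log h^2$, so for $p<6$ the cost wins as $h\to\infty$.

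This is a general obstruction, not a defect of the test function. If $a<0$ then $G\ge 0$ on $[0,1]$, so $E(u)\ge D(u)>0$ whenever $\|u\|_\infty\le 1$. If $h:=\|u\|_\infty>1$, the one-dimensional inequality $\|u\|_\infty^2\le\|u\|_2\|u'\|_2$ gives $D(u)\ge h^4/(2\lambda)$ on $S(\lambda)$. Since $s\mapsto s^{p-2}\log s^2$ is increasing on $(1,\infty)$,
\[
E(u)\ge\frac{h^4}{2\lambda}-|a|\lambda h^{p-2}\log h^2=h^{p-2}\Big(\frac{h^{6-p}}{2\lambda}-|a|\lambda\log h^2\Big).
\]
Because $\min_{h>1}h^{6-p}/\log h^2=e(6-p)/2$, one gets $E>0$ on $S(\lambda)$ whenever $4|a|\lambda^2<e(6-p)$. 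On the other hand, $\sigma^{1/2}u(\sigma\cdot)$ with $\sigma\to 0$ shows $I(\lambda)\le 0$. Hence $I(\lambda)=0$ is not attained and $\mathcal{G}_\lambda=\emptyset$ for such $\lambda$. The statement itself is false for $a<0$ and small $\lambda$. Your exclusion of vanishing and your strict subadditivity also fail there, since both rest on $I(\lambda)<0$.

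The paper's proof has the same defect. In its test-function computation the ratio $s^{2\alpha+4-p}/\log s^2$ tends to zero only if $\alpha\le(p-4)/2<1$. That is incompatible with the condition $\alpha>1$ imposed to make the layers negligible.

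What your argument does prove for $a<0$ is the result for those $\lambda$ with $I(\lambda)<0$, which form a half-line, as in \cite[Theorem~1.1]{GG17}. For such $\lambda$, the inequality $I(\theta\mu)\le\theta I(\mu)$ for $\theta\ge 1$ (strict when $I(\mu)<0$) still yields $I(\lambda)<I(\mu)+I(\lambda-\mu)$ for every $0<\mu<\lambda$. This holds even when one of $I(\mu)$, $I(\lambda-\mu)$ vanishes. Your compactness and stability steps then go through unchanged.
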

The main results of this paper focus on the stability of \(\mathcal{G}_\lambda(u)\).
Depending on whether \(a>0\) or \(a<0\), we rely on two different methods, both
based on properties of the mass function (Definition~
\ref{defn.mass-function})
\begin{linenomath}
    \begin{equation}
\lambda\colon\Omega\ni\omega\to 
\|R_\omega\|_{L^2(\mathbb{R})}^2\in (0,+\infty).
    \end{equation}
\end{linenomath}
The function \(R_\omega\) is the unique, positive and symmetric solution to \eqref{eq.log} in \(H^1\). We set
\begin{linenomath}
    \begin{equation}
    \label{eq.radial}
        \mathcal{G}_\lambda^{r,+} := \mathcal{G}_\lambda\cap H^1_{r,+}.
    \end{equation}
\end{linenomath}
\begin{theo}[Uniqueness of normalized standing-waves]
\label{thm.uniqueness-focusing}
Given \(a>0\) and \(2<p\leq 6\), the set 
\(\mathcal{G}_\lambda\cap H^1_{r,+}\) is a singleton
for every \(\lambda>0\).
\end{theo}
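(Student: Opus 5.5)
The plan is to reduce the statement to a property of the mass function \(\lambda\colon\Omega\to(0,+\infty)\) of Definition~\ref{defn.mass-function}. First, every \(u\in\mathcal{G}_\lambda\cap H^1_{r,+}\) is a constrained critical point of \(E\) on \(S(\lambda)\). By the Lagrange multiplier rule there is \(\omega\in\mathbb{R}\) such that \(u\) is a weak solution of \(u''-G'(u)-\omega u=0\). Elliptic regularity then makes \(u\) a classical \(C^2\) solution, and the strong maximum principle, or ODE uniqueness at a zero, gives \(u>0\). Since \(u\) is even, positive and in \(H^1\), the uniqueness of the positive symmetric \(H^1\) solution recalled before the statement yields \(u=R_\omega\) with \(\omega\in\Omega\) and \(\lambda(\omega)=\lambda\). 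Hence \(\mathcal{G}_\lambda\cap H^1_{r,+}\subseteq\{R_\omega\mid \omega\in\lambda^{-1}(\lambda)\}\). Non-emptiness comes from Theorem~\ref{thm.ground-state-log}: if \(u\in\mathcal{G}_\lambda\), then its symmetric decreasing rearrangement \(|u|^*\) lies in \(S(\lambda)\) and satisfies \(E(|u|^*)\le E(u)\), so it belongs to \(\mathcal{G}_\lambda\cap H^1_{r,+}\). It therefore suffices to prove that \(\lambda\) is injective on \(\Omega\), which I will obtain from strict monotonicity, as the abstract suggests for this sign of \(a\).

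Next, I will make the mass function explicit through the phase plane. Multiplying the equation by \(R'\) and integrating from \(+\infty\) gives \((R_\omega')^2=\omega R_\omega^2+2G(R_\omega)\). So \(b:=R_\omega(0)\) is the first positive zero of \(s\mapsto \omega s^2+2as^p\log s^2\). This forces \(\omega>0\), \(b\in(0,1)\) and \(\omega=-2ab^{p-2}\log b^2\). The map \(b\mapsto\omega(b)\) is increasing on \((0,b_*)\) with \(b_*=e^{-1/(p-2)}\), and this branch is exactly the one on which the zero is simple and first. Hence \(\Omega=(0,\omega(b_*))\), and we may use \(b\in(0,b_*)\) as the parameter. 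Using \(R_\omega(x)=s\) as the variable on \(x>0\) and then rescaling \(s=bt\), I get
\begin{linenomath}
\begin{equation}
\lambda=2\int_0^{b}\frac{s^2\,ds}{\sqrt{\omega s^2+2G(s)}}=2b^2\int_0^1\frac{t^2\,dt}{\sqrt{\omega(b)(t^2-t^p)+2ab^{p-2}t^p\log t^2}}.
\end{equation}
\end{linenomath}
Both terms under the square root are positive on \((0,1)\), since \(t^p\log t^2<0\) and \(b^{p-2}\) is small. The integral converges at \(t=1\) because the square root vanishes there like \(\sqrt{1-t}\), with coefficient proportional to \(\omega'(b)\neq0\) for \(b<b_*\).

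The main obstacle is proving that \(b\mapsto\lambda(b)\) is strictly monotone on \((0,b_*)\). The factor \(b^2\) and the growth of \(\omega(b)\) push in opposite directions, and the term \(2ab^{p-2}t^p\log t^2\) becomes more negative as \(b\) grows. My first attempt will be to factor out \(\omega(b)\), writing
\begin{linenomath}
\begin{equation}
\lambda=\frac{2b^2}{\sqrt{\omega(b)}}\int_0^1\frac{t^2\,dt}{\sqrt{t^2-t^p-\frac{t^p\log t^2}{\log b^2}}}.
\end{equation}
\end{linenomath}
The prefactor is \(b^{(6-p)/2}\) times a positive multiple of \((-\log b^2)^{-1/2}\), and this is increasing in \(b\) when \(p\le6\). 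The inner integrand is increasing in \(b\) because \(-\log b^2\) decreases. Hence \(\lambda\) is strictly increasing in \(b\), and therefore in \(\omega\), which is exactly where \(p\le6\) enters. If the boundary behaviour near \(b_*\) causes trouble in this computation, I would instead argue with a Pohozaev-type identity, or with the sign of \(\frac{d}{d\omega}\|R_\omega\|_{L^2}^2\) via the linearized operator. Once monotonicity holds, \(\lambda^{-1}(\lambda)\) has at most one point, and the singleton conclusion follows.
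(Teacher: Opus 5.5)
Your argument is correct. It reaches the key point, strict monotonicity (hence injectivity) of the mass function on \(\Omega=(0,\omega_2^*)\), by a different route from the paper.

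\textbf{The paper's route.} The paper never computes \(\lambda\). It checks the Euler differential inequality \(L(s)=12G(s)-7sG'(s)+s^2G''(s)\geq 0\) on \((0,s_2^*)\) and quotes \cite[Lemma~3.1]{GG17} to get that \(\lambda\) is non-decreasing. It then excludes a flat piece by rigidity: \(\lambda(\omega_1)=\lambda(\omega_2)\) with \(\omega_1<\omega_2\) would make \(\lambda\) constant on \((\omega_1,\omega_2)\). That forces \(L\equiv 0\), hence \(G(s)=c_1s^2+c_2s^6\) near \(0\), which is false.

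\textbf{Your route.} You use the scaling identity \(G(bt)=b^p\big(G(t)+at^p\log b^2\big)\) together with \(\omega=-2ab^{p-2}\log b^2\). This splits the \(b\)-dependence into two pieces:
\begin{itemize}
\item the prefactor \(\sqrt{2/a}\,b^{(6-p)/2}(-\log b^2)^{-1/2}\), which is strictly increasing when \(p\leq 6\);
\item a family of integrands on the fixed interval \((0,1)\) that increases pointwise in \(b\).
\end{itemize}
This gives strict monotonicity at once, with no external lemma and no rigidity step, and it shows transparently where \(p\leq 6\) enters.

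\textbf{What each approach buys.} Your computation also gives \(\lambda\to 0\) as \(b\to 0\). Since the radicand acquires a double zero at \(t=1\) when \(b=b_*\), monotone convergence gives \(\lambda\to+\infty\) as \(b\to b_*\). With continuity in \(b\), \(\lambda\) therefore maps \(\Omega\) increasingly onto \((0,+\infty)\). The price is generality: your computation depends on the specific log-power structure, whereas the paper's criterion applies to any \(G\) satisfying the Euler inequality. Your reduction step (Lagrange multiplier, positivity, \(\omega\in\Omega\), and non-emptiness via \(|u|^*\)) is also more explicit than the paper's, which takes these points for granted.

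\textbf{One slip to fix.} On \((0,1)\) the term \(2ab^{p-2}t^p\log t^2\) is negative, not positive, so ``both terms are positive'' is false. Positivity of the radicand comes instead from the fact that it equals \(t^2\big(\omega-V(bt)\big)\), and \(V(bt)<V(b)=\omega\) for \(t<1\) because \(V\) is increasing on \((0,b_*)\).
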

The theorem relies on a Euler differential inequality introduced in \cite{GG17}, as a sufficient condition to have \(\lambda'(\omega)> 0\)
for every \(\omega\in\Omega\).
If \(a<0\), the inequality is not satisfied nor we have a conclusion on the uniqueness. However, we can show that \(\mathcal{G}_\lambda^{r,+}\)
is a \textsl{finite set} under the following assumptions:

\hypertarget{G0}{(G0).}
\(G\) is even, \(C^1([0,+\infty))\cap C^{\omega}((0,+\infty))\) and
\(G'\) is locally Lipschitz.

\hypertarget{G1}{(G1).}
\label{G1} There exists \(s_0>0\) such that \(G(s_0)<0\).

\hypertarget{G2a}{(G2a).}
There exist \(2<p<q\) and \(C>0\) such that 
\(|G'(s)|\leq C(|s|^{p-1} + |s|^{q-1})\) for every \(s\).

\hypertarget{G2b}{(G2b).}
There exist \(2 < p_* < 6\) and \(C>0\) such that 
and \(G(s)\geq - C|s|^{p_*}\) for every \(s\geq s_*\).

From (G1), (G2a) and (G2b) and the local Lipschitz property of \(G'\), it follows
the stability of \(\mathcal{G}_\lambda\), \cite[Theorem~1.1]{GG17}. In this paper, we include the analyticity of \(G\) on \((0,+\infty)\). Consequently, the mass
function \(\lambda\) is analytic, Lemma~\ref{lemma.lambda_analytic}.
\begin{theo}
\label{thm.finite}
Suppose that \(G\) satisfies (\hyperlink{G0}{G0}), 
(\hyperlink{G1}{G1}),
(\hyperlink{G2a}{G2a}) and (\hyperlink{G2b}{G2b}). Then, \(\mathcal{G}_\lambda^{r,+}\) is a finite set.
\end{theo}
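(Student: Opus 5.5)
The plan is to parametrise $\mathcal{G}_\lambda^{r,+}$ by the frequency $\omega$ and show that only finitely many frequencies can occur.

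\textbf{Step 1: reduction to a frequency set.} Every $u\in\mathcal{G}_\lambda^{r,+}$ is a constrained minimiser of $E$ on $S(\lambda)$. Hence there is a Lagrange multiplier $\omega$ such that $u$ is a positive, symmetric $H^1$ solution of \eqref{eq.log}. By the uniqueness of positive symmetric vanishing solutions (the ODE shooting/energy argument in dimension one, as in \cite{McL93}), $u=R_\omega$ with $\omega\in\Omega$. Therefore $\mathcal{G}_\lambda^{r,+}=\{R_\omega\mid \omega\in\Omega_\lambda\}$, where
\[
\Omega_\lambda:=\{\omega\in\Omega\mid \lambda(\omega)=\lambda,\ E(R_\omega)=I(\lambda)\}.
\]
Since $\omega\mapsto R_\omega$ is injective (it is determined by the multiplier), it suffices to prove that $\Omega_\lambda$ is finite.

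\textbf{Step 2: compactness of $\Omega_\lambda$.} The goal is to show that $\Omega_\lambda$ is a compact subset of the open set $\Omega$. The multiplier satisfies $\omega\lambda=-\|u'\|_2^2-\int G'(u)u$. Stability of minimisers and boundedness of minimising sets in $H^1$ (Theorem~\ref{thm.ground-state-log}-type arguments under (G1)--(G2b), \cite{GG17}) then give that $\Omega_\lambda$ is bounded. It is also bounded away from $\partial\Omega$: near the endpoint of $\Omega$ where $R_\omega$ degenerates, $R_\omega\to0$ uniformly or loses vanishing, and one checks this is incompatible with $E(R_\omega)=I(\lambda)<0$. Closedness should follow from the continuity of $\omega\mapsto R_\omega$ in $H^1$ together with the continuity of $E$ and $I$. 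Compactness of $\Omega_\lambda$ inside $\Omega$ is the step I expect to be the main obstacle, because it requires a careful description of $\Omega$ and of the behaviour of $R_\omega$ at its endpoints.

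\textbf{Step 3: analyticity and counting.} By Lemma~\ref{lemma.lambda_analytic}, $\lambda$ is real-analytic on $\Omega$, and $\lambda$ is not constant, since it tends to different limits at the ends of $\Omega$. Hence the zeros of $\lambda(\cdot)-\lambda$ are isolated in $\Omega$. A compact set made of isolated points is finite, so $\Omega_\lambda$ is finite.

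If closedness in Step 2 is delicate, an alternative is available. The function $e(\omega):=E(R_\omega)$ is also analytic, and the identity $e'(\omega)=-\tfrac{\omega}{2}\lambda'(\omega)$ shows that along each monotone branch the energy is controlled. It then suffices to bound the number of preimages of $\lambda$ in a compact interval containing $\Omega_\lambda$, which the isolated-zero argument already does.
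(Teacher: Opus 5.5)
Your skeleton (the admissible frequencies form a compact set, and the zeros of the analytic function $\lambda(\cdot)-\lambda_0$ are isolated, with $\lambda_0$ the prescribed mass) is the right one. However, Step~3 has a genuine gap. The set $\Omega$ is in general disconnected. At each discontinuity point $\omega_k^*$ of $T$, the value $T(\omega_k^*)$ is a local maximum of $V$, so $V'(T(\omega_k^*))=0$ and $\omega_k^*\notin\Omega$. Analyticity therefore only propagates inside a single component of $\Omega$. To get isolated zeros you need $\lambda\not\equiv\lambda_0$ on \emph{every} component that meets $\Omega_{\lambda_0}$.

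The reason you give (``different limits at the ends of $\Omega$'') is not available under (G0)--(G2b). The only endpoint asymptotics in the paper, $\lambda\to0$ as $\omega\to0$, need the extra condition $\omega^{1/2}(T^2)'(\omega)=o(1)$ of Lemma~\ref{lemma.lambda_analytic}. At an interior critical frequency, the profiles develop a long plateau near $T(\omega_k^*)$, so one expects $\lambda\to+\infty$ from \emph{both} sides, i.e.\ the same limit. Your closing ``alternative'' does not repair this, since it again rests on the isolated-zero argument.

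The missing idea is to rule out the constant case rather than prove non-constancy. Suppose $\lambda\equiv\lambda_0$ on a component $J$ of $\Omega$ that meets $\Omega_{\lambda_0}$. Your own identity $e'(\omega)=-\frac{\omega}{2}\lambda'(\omega)$ then forces $E(R_\omega)\equiv I(\lambda_0)$ on $J$, so $J\subseteq\Omega_{\lambda_0}$. Letting $\omega$ run to an end of $J$, where it leaves every compact subset of $\Omega$, contradicts the compactness of $\Omega_{\lambda_0}$ in $\Omega$. This is exactly how the paper concludes.

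That compactness, which you only justify heuristically near $\partial\Omega$, follows directly from concentration-compactness. A sequence $(R_{\omega_n})$ in $\mathcal{G}_{\lambda_0}^{r,+}$ is minimizing, so by \cite[Theorem~1.1]{GG17} a subsequence converges in $H^1$ to some $R\in\mathcal{G}_{\lambda_0}^{r,+}$; translations can be dropped because the functions are even and decreasing. Hence $R=R_{\omega_0}$ with $\omega_0\in\Omega$, and $\omega_n=V(R_{\omega_n}(0))\to V(R(0))=\omega_0$. No analysis of how the profiles degenerate at $0$, at $\sup V$ or at the $\omega_k^*$ is needed.
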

In \cite{Gar12} we remarked that when \(\mathcal{G}_\lambda^{r,+}\) is
finite, every normalized standing-wave is stable. Consequently, for a large class of non-linearities, normalized standing-waves are stable, including 
\eqref{eq.log-interaction} when \(a<0\).
\begin{theo}
\label{thm.stability-wave}
Suppose that \(G\) satisfies (\hyperlink{G0}{G0}), 
(\hyperlink{G1}{G1}),
(\hyperlink{G2a}{G2a}) and 
(\hyperlink{G2b}{G2b}). Then,
\(\mathcal{G}_\lambda (u)\) is stable for every \(u\in\mathcal{G}_\lambda\).
\end{theo}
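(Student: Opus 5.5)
The plan is to derive Theorem~\ref{thm.stability-wave} from Theorem~\ref{thm.finite} together with the stability of the whole ground state $\mathcal{G}_\lambda$ (Theorem~\ref{thm.ground-state-log}, or \cite[Theorem~1.1]{GG17} under (G1), (G2a), (G2b)). This is the argument sketched in \cite{Gar12}.

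\textbf{Step 1: structure of $\mathcal{G}_\lambda$.} Every $u\in\mathcal{G}_\lambda$ is of the form $u = zR(\cdot+y)$ with $(z,y)\in S^1\times\mathbb{R}$ and $R\in\mathcal{G}_\lambda^{r,+}$. Indeed, $|u|$ is also a minimiser, since $E(|u|)\le E(u)$ and the mass is unchanged. Equality in the diamagnetic inequality $\int||u|'|^2\le\int|u'|^2$ forces $u=z|u|$ with $z$ a constant phase. The function $|u|$ solves \eqref{eq.log} with a Lagrange multiplier $\omega$. Since $G'$ is locally Lipschitz, the strong maximum principle / ODE uniqueness gives $|u|>0$. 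In dimension one, a positive $H^1$ solution of \eqref{eq.log} is symmetric about its maximum point and is unique up to translation. By Theorem~\ref{thm.finite} we may therefore write $\mathcal{G}_\lambda=\bigcup_{j=1}^N\mathcal{G}_\lambda(R_j)$ with $\mathcal{G}_\lambda^{r,+}=\{R_1,\dots,R_N\}$ finite.

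\textbf{Step 2: the orbits are separated.} Set $\delta_0:=\min_{i\neq j} d(\mathcal{G}_\lambda(R_i),\mathcal{G}_\lambda(R_j))$; I claim $\delta_0>0$. Suppose instead that $z_nR_i(\cdot+y_n)\to R_j$ in $H^1$. If $|y_n|\to\infty$, the $L^2$ mass of $R_i(\cdot+y_n)$ escapes to infinity, which is impossible because $R_j\neq0$. Hence $(z_n,y_n)$ is bounded; extracting a convergent subsequence gives $R_j=zR_i(\cdot+y)$. Positivity and evenness of both functions then force $z=1$ and $y=0$, so $i=j$. A similar compactness argument shows that each orbit is closed. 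It also shows that if $d(v,\mathcal{G}_\lambda(R_i))<\delta_0/3$, then $d(v,\mathcal{G}_\lambda(R_j))>2\delta_0/3$ for every $j\neq i$.

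\textbf{Step 3: connectedness argument.} Let $u=zR_i(\cdot+y)$, and fix $\varepsilon<\delta_0/3$. By stability of $\mathcal{G}_\lambda$ there is $\delta\le\varepsilon$ such that $d(u_0,\mathcal{G}_\lambda)<\delta$ implies $d(U_t(u_0),\mathcal{G}_\lambda)<\varepsilon$ for all $t\ge0$. Take $u_0$ with $d(u_0,\mathcal{G}_\lambda(u))<\delta$. For each $t$, the orbit $U_t(u_0)$ lies within $\varepsilon$ of exactly one orbit $\mathcal{G}_\lambda(R_{j(t)})$. Since $t\mapsto U_t(u_0)$ is continuous in $H^1$ and $t\mapsto d(U_t(u_0),\mathcal{G}_\lambda(R_j))$ is continuous, each set $\{t: j(t)=j\}$ is both open and closed in $[0,\infty)$. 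Hence $j(t)\equiv j(0)=i$, and so $d(U_t(u_0),\mathcal{G}_\lambda(u))<\varepsilon$ for all $t\ge0$.

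Invariance $U_t(\mathcal{G}_\lambda(u))\subseteq\mathcal{G}_\lambda(u)$ holds because $U_t(zR(\cdot+y))=e^{-i\omega t}zR(\cdot+y)$, combined with uniqueness of solutions. This uses the global well-posedness already assumed in the stability framework.

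The main obstacle is Step~1, namely identifying every minimiser with a phase-translate of an element of $\mathcal{G}_\lambda^{r,+}$. This requires the equality case of the diamagnetic inequality, positivity, and uniqueness up to translation of positive one-dimensional solutions of \eqref{eq.log}. For the latter I would first try the ODE energy identity $\tfrac12 R'^2 = G(R)+\tfrac{\omega}{2}R^2$, which makes the profile unique and even about its maximum.
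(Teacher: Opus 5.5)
Your proof is correct, and it finishes by a different, more elementary route than the paper. Both arguments share the same frame. First, $\mathcal{G}_\lambda$ is a finite union of orbits: this combines Theorem~\ref{thm.finite} with the representation $u=zR(\cdot+y)$, which the paper quotes from \cite[Lemma~2.4]{GG17} and you reprove. In your Step~1, establish $|u|>0$ before invoking the equality case of the diamagnetic inequality, since the constant-phase conclusion requires $|u|$ not to vanish. Second, the orbits are at positive mutual distance. Third, $\mathcal{G}_\lambda$ itself is stable.

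For the separation, the paper proves the quantitative bound of Lemma~\ref{lem.gap} by minimising over $S^1\times\mathbb{R}$ and using non-expansivity of Schwarz symmetrisation. Your compactness argument gives only $\delta_0>0$, but that is all that is needed.

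The real difference is the last step. The paper builds an energy barrier, $\inf E>I(\lambda)$ on $S(\lambda)\cap\partial B(\mathcal{G}_\lambda(R_i),\delta)$, using compactness of minimising sequences. It then combines the stability of $\mathcal{G}_\lambda$, the intermediate value theorem and energy conservation to reach a contradiction. You observe instead that the trajectory stays for all $t\ge0$ in the $\varepsilon$-neighbourhood of $\mathcal{G}_\lambda$, whose parts around distinct orbits are separated. Hence the orbit index $j(t)$ is locally constant on the connected set $[0,\infty)$.

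Since the paper already invokes the stability of $\mathcal{G}_\lambda$, your topological argument shows the barrier is redundant there. It also sidesteps a point the paper glosses over. The paper's contradiction hypothesis takes $u_n\in S(\lambda)$, although perturbed initial data need not have mass exactly $\lambda$, and the barrier is only defined on $S(\lambda)$.

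The paper's barrier mechanism, for its part, is the classical Cazenave--Lions one. Suitably arranged, it needs only compactness of minimising sequences, not the stability of $\mathcal{G}_\lambda$ as a black box. Finally, you also verify the invariance condition (i) of the stability definition, which the paper leaves implicit.
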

This theorem applies, in particular, to a combination of an arbitrarily high 
number of powers, as in \cite{Mae08} and \cite{GG19}. In \cite{GG19}
uniqueness of normalized waves, and therefore stability, has been obtained 
for a combination of three powers, under some restrictions on the coefficients.
In Proposition~\ref{prop.combined-powers}, we prove that for the non-linearity
\begin{linenomath}
    \begin{gather}
    \label{eq.combined}
        G(s) = \sum_{i=1}^m \alpha_i |s|^{p_i},\quad 2 < p_i < p_{i+1}\\
        \alpha_i<0\Rightarrow p_i<6\ \forall i
    \end{gather}
\end{linenomath}
\(\mathcal{G}_\lambda(u)\) is stable for every \(u\in\mathcal{G}_\lambda\).
Finally, we describe the asymptotic behaviour of \(\lambda\) as \(\omega\to 0\). In particular, when
\(G\) is as in \eqref{eq.log-interaction} or
\eqref{eq.combined}, \(\lambda(\omega) = o(1)\) as per
Theorem~\ref{thm.log-asymptotic} and Proposition~\ref{prop.combined-powers},
respectively.
\section{Existence of minima and compactness up to translation}
In this section, we prove that minimizing sequences of \(E\) constrained
to \(S(\lambda)\) are compact up to translation in all the cases
where normalized standing-waves exist, namely \(2<p<6\) for
every \(a\neq 0\), and \(p=6\) for \(a>0\), while they do not exist
in all the other cases. The case \(p=2\) is omitted as \(E\) is not even
well-defined in \(H^1\). Instead, we refer to \cite{Ard16} for the stability
of standing-waves (Gaussons) when \(p=2\). In the remainder of this
section we will use the notations
\begin{linenomath}
    \begin{equation}
        D(u) := \frac12 \int^{+\infty}_{-\infty}  |u'(x)|^2 ,
        \quad P(u) := \int^{+\infty}_{-\infty}  G(|u(x)|)dx
    \end{equation}
\end{linenomath}
for every \(u\in H^1(\mathbb{R};\mathbb{C})\).
\begin{proposition}
\label{prop.smooth}
    \(E\) and \(M\) are \(C^2(H^1(\mathbb{R};\mathbb{C}),\mathbb{R})\).
\end{proposition}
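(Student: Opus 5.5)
The plan is to split \(E=D+P\) and treat the three pieces separately. \(M\) and \(D\) are continuous quadratic forms on \(H^1(\mathbb{R};\mathbb{C})\): \(M(u)=\|u\|_{L^2}^2\) and \(D(u)=\frac12\|u'\|_{L^2}^2\). Hence they are \(C^\infty\), with \(M'(u)v=2\,\text{Re}\int u\bar v\) and \(M''(u)[v,w]=2\,\text{Re}\int v\bar w\), and similarly for \(D\). All the work is in \(P(u)=\int g(u(x))\,dx\), where we regard \(g\colon\mathbb{R}^2\cong\mathbb{C}\to\mathbb{R}\), \(g(z):=G(|z|)=a|z|^p\log|z|^2\) with \(g(0)=0\). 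We will show that \(P\) is \(C^2\) as a Nemytskii-type functional. Besides the regularity of \(g\), we use only two facts: the one-dimensional embedding \(\|u\|_{L^\infty}\leq C\|u\|_{H^1}\), and \(\|vw\|_{L^1}\leq\|v\|_{L^2}\|w\|_{L^2}\).

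\textbf{Step 1 (pointwise regularity; the main obstacle).} We show that \(g\in C^2(\mathbb{R}^2)\) with \(g(0)=0\), \(\nabla g(0)=0\), \(D^2g(0)=0\). Away from the origin \(g\) is smooth. Write \(g(z)=f(|z|)\) with \(f(r)=a r^p\log r^2\). The gradient is \(\nabla g(z)=f'(|z|)\,z/|z|\). The Hessian has eigenvalues \(f''(r)\) in the radial direction and \(f'(r)/r\) in the tangential direction. Since
\begin{equation}
f'(r)=a r^{p-1}(p\log r^2+2), \qquad f''(r)=a r^{p-2}\bigl((p-1)p\log r^2+4p-2\bigr),
\end{equation}
all of \(f(r)/r^2\), \(f'(r)/r\) and \(f''(r)\) are \(O(r^{p-2}|\log r|)\to 0\) as \(r\to0\), because \(p>2\). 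Two consequences follow. First, the Hessian extends continuously by \(0\) at the origin. Second, the gradient is differentiable at \(0\) with zero derivative, since \(|\nabla g(z)|=o(|z|)\). Hence \(g\in C^2\). In particular, for every \(K>0\) there is \(C_K\) such that \(|g(z)|\leq C_K|z|^2\), \(|\nabla g(z)|\leq C_K|z|\) and \(|D^2g(z)|\leq C_K\) for \(|z|\leq K\). Moreover \(D^2g\) is uniformly continuous on \(\{|z|\leq K\}\); let \(\varpi_K\) denote a modulus of continuity.

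\textbf{Step 2 (well-definedness and candidate derivatives).} Fix \(u\in H^1\) and set \(K:=C\|u\|_{H^1}+1\). Then \(|g(u(x))|\leq C_K|u(x)|^2\), so \(P(u)\) is finite. We define the candidate derivatives
\begin{equation}
P'(u)v:=\int \nabla g(u)\cdot v\,dx, \qquad P''(u)[v,w]:=\int D^2g(u)[v,w]\,dx,
\end{equation}
where \(\cdot\) is the real inner product on \(\mathbb{R}^2\). Both are bounded: \(|P'(u)v|\leq C_K\|u\|_{L^2}\|v\|_{L^2}\) and \(|P''(u)[v,w]|\leq C_K\|v\|_{L^2}\|w\|_{L^2}\).

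\textbf{Step 3 (Fréchet differentiability and continuity of \(P''\)).} Take \(\|h\|_{H^1}\leq 1/C\), so that \(\|u+th\|_{L^\infty}\leq K\) for \(t\in[0,1\)]. By Taylor's formula with integral remainder, applied pointwise,
\begin{equation}
g(u+h)-g(u)-\nabla g(u)\cdot h-\tfrac12 D^2g(u)[h,h]=\int_0^1(1-t)\bigl(D^2g(u+th)-D^2g(u)\bigr)[h,h]\,dt .
\end{equation}
Integrating in \(x\), the left-hand side of the remainder estimate is bounded by \(\varpi_K(\|h\|_{L^\infty})\,\|h\|_{L^2}^2=o(\|h\|_{H^1}^2)\). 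Hence \(P\) is twice Fréchet differentiable with the derivatives of Step 2. (The analogous first-order Taylor estimate gives \(P'\) as the derivative of \(P\). The same argument applied to \(\nabla g\), whose increments are controlled by \(\sup|D^2g|\) on the ball, shows \(P''(u)\) is the derivative of \(P'\).)

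For continuity of \(P''\), take \(\|u-w\|_{H^1}\) small so that both functions are bounded by \(K\) in \(L^\infty\). Then
\begin{equation}
\bigl|P''(u)[v,h]-P''(w)[v,h]\bigr|\leq \varpi_K\bigl(C\|u-w\|_{H^1}\bigr)\,\|v\|_{L^2}\|h\|_{L^2},
\end{equation}
so \(u\mapsto P''(u)\) is continuous in operator norm. Together with Step 1 and the quadratic forms \(D\) and \(M\), this gives \(E,M\in C^2(H^1(\mathbb{R};\mathbb{C}),\mathbb{R})\).

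The only genuinely delicate point is Step 1: the logarithm makes \(g\) non-analytic at \(0\), and one must check that \(p>2\) gives enough decay of \(r^{p-2}\log r\) for \(D^2g\) to be continuous at the origin. Everything afterwards is routine because \(H^1(\mathbb{R})\hookrightarrow L^\infty\) reduces all estimates to compact sets of values.
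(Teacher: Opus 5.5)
Your proof is correct, but it takes a different route from the paper. The paper argues indirectly. It proves a double-power bound $|G''(s)|\le c_1(|s|^{p-2-\varepsilon_0}+|s|^{p-2+\varepsilon_0})$ and then invokes the general Nemytskii result \cite[Proposition~7]{GG17} to get $P\in C^2(H^1,\mathbb{R})$. It treats $M$ with the same proposition for $G(s)=s^2$, and $D$ through $D(u)=\|u\|_{H^1}^2-M(u)$, up to a factor $\frac12$.

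You instead prove the $C^2$ property from scratch. You check that $g(z)=G(|z|)$ is $C^2$ on $\mathbb{R}^2$ with vanishing $2$-jet at the origin, explicitly controlling the tangential Hessian eigenvalue $G'(r)/r$ as well as $G''(r)$. You then use $H^1(\mathbb{R})\hookrightarrow L^\infty$, so that only the modulus of continuity of $D^2g$ on a compact disc matters.

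Your route buys three things. It is self-contained. It works for every $p>2$ and every $a$ with no growth condition at infinity, which shows why the paper's aside that $p+\varepsilon_0>6$ when $p=6$ is harmless in dimension one. It also handles the complex-valued structure that the paper leaves to the cited proposition. The paper's route is shorter, and it produces the growth estimate \eqref{eq.nemytskii}, whose double integration \eqref{eq.coercive.5} is reused for coercivity and weak lower semicontinuity.

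One small point of wording concerns your parenthetical claim that $P''$ is the derivative of $P'$. That step uses the modulus $\varpi_K$, exactly as in your second-order Taylor estimate, since $\bigl|P'(u+h)v-P'(u)v-P''(u)[h,v]\bigr|\le\varpi_K(\|h\|_{L^\infty})\|h\|_{L^2}\|v\|_{L^2}$. The bound $\sup|D^2g|$ alone only gives local Lipschitz continuity of $P'$.
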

\begin{proof}
Since \(p>2\), then \(G\in C^2(\mathbb{R})\) and 
\begin{linenomath}
    \begin{equation}
    \label{eq.second-derivative}
        G''(s) = as^{p-2} (p(p-1)\log s^2 + 4p - 2)
    \end{equation}
\end{linenomath}
for every \(s>0\), and \(G''(0) = 0\). To prove the regularity of \(E\), we show that 
\(G''\) satisfies a double-power estimate. Firstly, we consider the case 
\(2 < p < 6\). Let \(\varepsilon_0>0\) be
such that \(0 < p - 2 - \varepsilon_0 < p - 2 + \varepsilon_0 < 4\).
\begin{linenomath}
    \begin{equation}
    \begin{split}
    |a||s|^{p - 2 - \varepsilon_0} |s|^{\varepsilon_0}\log |s|^2&\leq 
    \||s|^{\varepsilon_0}\log|s|^2\|_{L^{\infty}(0,1)} |s|^{p - 2 - \varepsilon_0}\\
    &=\frac{2|a|}{e \varepsilon_0}|s|^{p - 2 - \varepsilon_0}
    \end{split}
    \end{equation}
\end{linenomath}
for every \(|s|\leq 1\). If \(|s|\geq 1\), we rely on the inequality 
\begin{linenomath}
    \begin{equation}
|a|p(p-1)|s|^{p - 2} \log|s|^2\leq \frac{2|a|p(p-1)}{e\varepsilon_0} |s|^{p - 2 + \varepsilon_0}.
\end{equation}
\end{linenomath}
Since 
\begin{linenomath}
    \begin{equation}
(4p-2)|s|^{p - 2}\leq (4p-2)(|s|^{p - 2 - \varepsilon_0} + |s|^{p - 2 + \varepsilon_0}), 
    \end{equation}
\end{linenomath}
combining estimates on \(|s|\leq 1\) and \(|s|\geq 1\), we obtain       
\begin{linenomath}
    \begin{equation}
    \begin{split}
    \label{eq.nemytskii}
        |G''(s)|\leq 
         c_1 (\varepsilon_0,a,p)(|s|^{p - 2 + \varepsilon_0} + |s|^{p - 2 + \varepsilon_0}).
        \end{split}
    \end{equation}
\end{linenomath}
If \(p=6\), the same estimates are satisfied, except that \(p + \varepsilon_0>6\).
Therefore, from \cite[Proposition~7]{GG17}, \(P\in C^2(H^1,\mathbb{R})\). As for \(M\), 
one can apply the same proposition when \(G(s) = s^2\). 
The kinetic part is also two-times continuously differentiable as 
\(D(u) = \|u\|_{H^1}^2 - M(u)\).
\end{proof}
\begin{proposition}
\label{prop.coercive}
Suppose that \(2 < p \leq 6\). 
If \(p = 6\) and \(a<0\), then \(E\) is not bounded below for every \(\lambda>0\). Otherwise, 
\(E\) is bounded below and coercive if
\(2 < p < 6\) and \(a\neq 0\) or \(p=6\) and \(a>0\). Moreover,
\(I(\lambda)<0\) for every \(\lambda>0\).
\end{proposition}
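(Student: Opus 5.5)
The plan has three parts: a scaling argument for unboundedness, a Gagliardo--Nirenberg estimate for coercivity, and a concentrated test function for negativity of \(I\). Throughout we use the mass-preserving dilation \(u_\theta(x) := \theta^{1/2}u(\theta x)\), which satisfies \(M(u_\theta)=M(u)\) and \(D(u_\theta)=\theta^2 D(u)\). For the pure log-power we have
\begin{linenomath}
\begin{equation}
P(u_\theta) = a\theta^{\frac p2 - 1}\Big(\int |u|^p\log|u|^2\,dx + \log\theta\int|u|^p\,dx\Big).
\end{equation}
\end{linenomath}

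\emph{Unboundedness when \(p=6\), \(a<0\).} Here \(\theta^{p/2-1}=\theta^2\). Fix any \(u\in S(\lambda)\) with \(u\neq 0\). Then
\begin{linenomath}
\begin{equation}
E(u_\theta)=\theta^2\Big(D(u)+a\int|u|^6\log|u|^2 + a\log\theta\int|u|^6\Big).
\end{equation}
\end{linenomath}
As \(\theta\to+\infty\), the bracket tends to \(-\infty\) because \(a<0\). Hence \(E(u_\theta)\to-\infty\) on \(S(\lambda)\).

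\emph{Boundedness below and coercivity.} Only the negative part of \(G\) matters, so the first step is to bound \(G\) from below. If \(a>0\), then \(G(s)<0\) only for \(|s|<1\), and there \(G(s)\geq -C_\varepsilon |s|^{p-\varepsilon}\) for small \(\varepsilon>0\) with \(p-\varepsilon>2\). If \(a<0\) and \(2<p<6\), then \(G(s)\geq -C_\varepsilon(|s|^{p-\varepsilon}+|s|^{p+\varepsilon})\), with \(\varepsilon\) chosen so that \(2<p\pm\varepsilon<6\); this is the same bound on \(|s|^\delta\log|s|^2\) used in Proposition~\ref{prop.smooth}. In all these cases \(G(s)\geq -C(|s|^{q_1}+|s|^{q_2})\) with \(2<q_1\leq q_2<6\). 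Apply the one-dimensional Gagliardo--Nirenberg inequality \(\|u\|_q^q\leq C\|u\|_2^{\frac q2+1}\|u'\|_2^{\frac q2-1}\). On \(S(\lambda)\) this gives
\begin{linenomath}
\begin{equation}
E(u)\geq D(u) - C_\lambda\big(D(u)^{\frac{q_1-2}{4}}+D(u)^{\frac{q_2-2}{4}}\big).
\end{equation}
\end{linenomath}
Both exponents are strictly less than \(1\), so \(E\) is bounded below on \(S(\lambda)\), and \(D(u)\to\infty\) forces \(E(u)\to\infty\). Since \(M\) is fixed, this is coercivity in \(H^1\).

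\emph{The case \(p=6\), \(a>0\)} is where I expect the main obstacle, since the critical exponent forbids the \(|s|^{6+\varepsilon}\) term. The point is that \(a>0\) makes \(G\) negative only on \(|s|<1\), where \(G(s)\geq -C|s|^{6-\varepsilon}\), and \(6-\varepsilon\) is subcritical. The lower-bound argument above therefore applies with \(q_1=q_2=6-\varepsilon\). This is why the positive part of \(G\) can simply be discarded.

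\emph{\(I(\lambda)<0\).} Fix \(u\in S(\lambda)\), smooth and positive, and let \(\theta\to 0^+\), so the wave is spread out. Then \(D(u_\theta)=\theta^2D(u)\), while
\begin{linenomath}
\begin{equation}
P(u_\theta)\sim a\,\theta^{\frac p2-1}\log\theta\int|u|^p \quad (\theta\to 0^+).
\end{equation}
\end{linenomath}
If \(a>0\), the leading term is negative because \(\log\theta<0\). Since \(\frac p2-1\leq 2\), it dominates \(\theta^2 D(u)\); when \(p=6\) it is the \(\log\theta\) factor that gives the domination. Hence \(E(u_\theta)<0\) for small \(\theta\).

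If \(a<0\) and \(2<p<6\), this sign argument fails, so instead let \(\theta\to\infty\). Now \(P(u_\theta)\approx a\theta^{p/2-1}\log\theta\int|u|^p\to-\infty\), but with exponent \(p/2-1<2\), so it loses to the kinetic term. A different test function is therefore needed. I would take amplitude \(s_0>1\), where \(G(s_0)<0\), and a long plateau: \(v_L\) equal to \(s_0\) on an interval of length \(L\) with fixed transition layers, rescaled to have mass \(\lambda\). Equivalently, use \(u_\theta\) with large mass and apply the scaling \(I(\lambda)\leq \frac{\lambda}{\mu}\,\cdot\) in reverse. The simplest route is to note that a plateau of height \(s_0\) and length \(L\) has \(E\approx G(s_0)L+O(1)<0\) for large \(L\), with mass \(\approx s_0^2L\). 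To reach an arbitrary \(\lambda\), I would combine this with the scaling \(w(x)=\sigma u(x)\) and the subhomogeneity \(I(\sigma^2\lambda)\leq \sigma^2 I(\lambda)\) for \(\sigma>1\), which follows from \(G(\sigma s)\leq\sigma^2G(s)\) when \(G<0\). The delicate point is obtaining negativity for small \(\lambda\). For that I would instead keep amplitude \(s_0\) and spread mass \(\lambda\) by dilation, checking the sign of \(D+P\) directly.
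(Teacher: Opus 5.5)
The scaling argument for \(p=6\), \(a<0\), the Gagliardo--Nirenberg lower bound, and the spreading argument \(\theta\to 0^+\) giving \(I(\lambda)<0\) for \(a>0\) are correct and follow the paper. For \(p=6\), \(a>0\) your bound works because only the negative part of \(G\), supported in \(|s|<1\), matters. The gap is the case you flagged yourself: \(a<0\), \(2<p<6\), small \(\lambda\). It cannot be closed, because the statement is false there.

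Your fixed-amplitude plateau (height \(s_0>1\), length \(L\to\infty\)) gives negative energy only for large mass. Neither scaling you invoke can bring that down to small mass. The amplitude scaling \(w=\sigma u\), \(\sigma>1\), increases the mass. Moreover, the pointwise bound \(G(\sigma s)\le\sigma^2 G(s)\) fails for small \(s\): when \(a<0\), \(G(s)/s^2=as^{p-2}\log s^2\) is increasing on \((0,e^{-1/(p-2)})\). The correct dilation inequality \(I(\vartheta\lambda)\le\vartheta I(\lambda)\), \(\vartheta\ge 1\), also only pushes negativity upward in mass.

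Your closing suggestion of keeping amplitude \(s_0\) and spreading the mass cannot work for small \(\lambda\). When \(a<0\) we have \(G\ge 0\) on \([0,1]\). Take \(u\in S(\lambda)\) and set \(m:=\|u\|_\infty\). If \(m\le 1\), then \(E(u)\ge D(u)>0\). If \(m>1\), use three facts: \(m^4\le\|u\|_2^2\|u'\|_2^2=2\lambda D(u)\); \(s^{p-2}\log s^2\) is increasing on \([1,\infty)\); and \(\log t\le t/e\). Together they give
\[
E(u)\ge\frac{m^4}{2\lambda}-|a|\lambda\, m^{p-2}\log m^2\ge m^4\Big(\frac{1}{2\lambda}-\frac{2|a|\lambda}{e(6-p)}\Big)>0
\]
whenever \(\lambda^2<e(6-p)/(4|a|)\). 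Since \(E(u_\theta)\to 0\) as \(\theta\to 0^+\), it follows that \(I(\lambda)=0\) for such \(\lambda\), and the infimum is not attained.

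The paper's own proof of this case does not fill the gap either. It uses plateaus of height \(s\to\infty\) and width \(r\approx\lambda/(2s^2)\to 0\), with ramps of width \(r^\alpha\), \(\alpha>1\). That is exactly the concentrating regime you correctly rejected as kinetic-dominated. The ratio of the kinetic term \(s^2r^{-\alpha}\) to \(2r|G(s)|\) is of order \(s^{2\alpha+4-p}/\log s\). It tends to zero only if \(\alpha\le (p-4)/2<1\), which is incompatible with \(\alpha>1\) when \(p<6\). The paper's choice \(\alpha>\max\{1,(p-4)/2\}\) reverses the needed inequality.

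What is actually provable for \(a<0\) is what your plateau gives. \(I(\lambda)<0\) holds for \(\lambda\) large, consistent with the large-mass result recalled in the Remark after the proposition. \(I(\lambda)=0\) holds below an explicit threshold. The ``Moreover'' clause should therefore be restricted to \(a>0\), or to large \(\lambda\) when \(a<0\).
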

\begin{proof}
In the critical case, we can show that the functional \(E\) is not
bounded below using a conformal rescaling, just as it can be done
for pure-power nonlinearities, for instance, at the end of the
proof of \cite[Lemma~3.1]{GG17}. Given \(u_0\in S(\lambda)\) we set
\(u_R(x) := R^{\frac12}u_0 (Rx)\).
From the variable change \(Rx=y\) we obtain
\begin{linenomath}
    \begin{equation}
        \begin{split}
\|u_R\|_{L^2}^2 &= \int^{+\infty}_{-\infty}  |u_R(x)|^2 dx 
= \int^{+\infty}_{-\infty}  R |u_0(Rx)|^2 dx \\
&= \int^{+\infty}_{-\infty}  R|u_0(y)|^2 R^{-1}dy = \int^{+\infty}_{-\infty}|u_0(y)|^2 dy=\|u_0\|_{L^2}^2 = \lambda.
\end{split}
\end{equation}
\end{linenomath}
Thus, \(u_R \in S(\lambda)\) for all \(R>0\). Also,
\begin{linenomath}
    \begin{equation}
        \begin{split}
        D(u_R) &= \frac{1}{2} \int^{+\infty}_{-\infty} | u_R'(x)|^2 dx 
= \frac{1}{2} \int^{+\infty}_{-\infty}  \left|R^{\frac{1}{2}}  u_0'(Rx)R\right|^2 dx \\
&= \frac{1}{2} \int^{+\infty}_{-\infty}R^3  |u_0'(Rx)|^2 dx =\frac{1}{2}\int^{+\infty}_{-\infty}R^3R^{-1}|u_0'(y)|^2 dy \\[0.5em]
&=R^2 D(u_0)
        \end{split}
    \end{equation}
\end{linenomath}
and
\begin{linenomath}
    \begin{equation}
    \begin{split}
        P(u_R) &= a \int^{+\infty}_{-\infty} |u_R(x)|^p\log |u_R(x)|^2 dx\\ 
&= a \int^{+\infty}_{-\infty} R^{\frac{p}2} |u_0(Rx)|^p \log |R^{\frac12}u_R(Rx)|^2 dx \\
&= a R^{\frac{p}2-1} \int^{+\infty}_{-\infty} |u_0(y)|^p 
\log |R^{\frac12} u_0(y)|^2 dy \\[0.8em]
&= R^{\frac{p}{2}-1} P(u_0) + aR^{\frac{p}2-1}\log R \int^{+\infty}_{-\infty} |u_0|^p dy.
\end{split}
    \end{equation}
\end{linenomath}
Thus, 
\begin{equation}
\label{eq.coercive.1}
\begin{aligned}
E(u_R) &= D(u_R) + P(u_R) \\
&= R^2 D(u_0) + R^2 P(u_0) + aR^{\frac{p}2-1}\log R
\int^{+\infty}_{-\infty} |u_0|^pdy\\
&= R^2 (D(u_0) + P(u_0)) + aR^{\frac{p}2-1}\log R
\int^{+\infty}_{-\infty} |u_0|^p dy \\
&= R^2 E(u_0) + aR^{\frac{p}2 - 1}\log R
\int^{+\infty}_{-\infty} |u_0|^pdy \\
&= R^2(E(u_0) + a\|u_0\|_p^p    R^{\frac{p}2 - 3}\log R).
\end{aligned}
\end{equation}
If \(a<0\) and \(p=6\), then the energy \(E_R(u_0)\) is unbounded below when 
\(R\) diverges to \(+\infty\). 

\paragraph*{\(p<6\) or \(a>0\)} In the sub-critical case \(2 < p < 6\), lower bounds and coercivity
can be obtained from the Gagliardo-Nirenberg inequality:
for every \(2 < p < 6\),
\begin{linenomath}
\begin{equation}
\label{eq.coercive.gn}
\|u\|_{L^p(\mathbb{R})}^p\leq 2 \|u\|_{L^2(\mathbb R)}^{\frac{p+2}{2}}
\|u'\|_{L^2(\mathbb R)}^{\frac{p-2}{2}}.
\end{equation}
\end{linenomath}
We refer to \cite{DEL14} for the optimal constant. A double integration of
the inequality \eqref{eq.nemytskii} gives
\begin{linenomath}
\begin{equation}
    \label{eq.coercive.5}
|G(s)|\leq c_1(|s|^{p - \varepsilon_0} + 
|s|^{p + \varepsilon_0}),
\end{equation}
\end{linenomath}
Therefore, for every \(u\in S(\lambda)\)
\begin{linenomath}
\begin{equation}
    \begin{split}
E(u) &= \frac12\int_{-\infty}^{+\infty} |u'|^2dx + a\int_{-\infty}^{+\infty}
|u|^p \log |u|^2dx\\
&\geq \frac12\int_{-\infty}^{+\infty} |u'|^2dx - 
c_1\int_{-\infty}^{+\infty} |u|^{p - \varepsilon_0}dx
- c_1\int_{-\infty}^{+\infty}|u|^{p + \varepsilon_0}dx.
\end{split}
\end{equation}
\end{linenomath}
From \eqref{eq.coercive.gn},
\begin{linenomath}
    \begin{equation}
    \label{eq.cc.8}
        \begin{split}
        E(u)
&\geq \frac12\int_{-\infty}^{+\infty} |u'|^2dx - 
2c_1 \|u\|_{L^2(\mathbb R)}^{\frac{p - \varepsilon_0 +2}{2}}
\|u'\|_{L^2(\mathbb R)}^{\frac{p - \varepsilon_0-2}{2}} - 
2c_1\|u\|_{L^2(\mathbb R)}^{\frac{p + \varepsilon_0 +2}{2}}
\|u'\|_{L^2(\mathbb R)}^{\frac{p + \varepsilon_0-2}{2}}\\
&=\frac12 D - 2c_1\lambda^{\frac{p - \varepsilon_0+2}{4}} D^{\frac{p - \varepsilon_0-2}{4}} - 2c_1\lambda^{\frac{p + \varepsilon_0+2}{4}} D^{\frac{p + \varepsilon_0-2}{4}}\\
&=: \frac12 D - 2c_1 \lambda^{\gamma_1 + 1}D^{\gamma_1} - 
2c_1\lambda^{\gamma_2 + 1}D^{\gamma_2},
\end{split}
\end{equation}
\end{linenomath}
where
\begin{linenomath}
\begin{equation}
\gamma_1 := \frac{p - \varepsilon_0+2}{4},\quad
\gamma_2 := \frac{p + \varepsilon_0-2}{4}.
\end{equation}
\end{linenomath}
Since \(p+\varepsilon_0<6\), there holds \(0 < \gamma_i < 1\) for \(i=1,2\). Therefore, \(E\) is bounded below. Moreover, if \(E\) is bounded above,
then \(D\) is bounded above. Since \(\|u\|_2^2 = \lambda\), 
the \(H^1\) norm of \(u\) is bounded above. Therefore, \(E\) is bounded below and coercive.

If \(p=6\) and \(a>0\), it is convenient to integrate in a neighbourhood of the origin and in a neighbourhood of infinity. From \eqref{eq.coercive.5} (\(p=6\)),
and \eqref{eq.coercive.gn},
\begin{linenomath}
    \begin{equation}
    \label{eq.coercive.8}
\begin{split}
E(u) &= \frac12\int_{-\infty}^{+\infty} |u'|^2dx + 
a\int_{|u|\leq 1} |u|^6\log |u|^2 dx + 
a\int_{|u|> 1} |u|^6\log |u|^2 dx\\
&\geq  \frac12\int_{-\infty}^{+\infty} |u'|^2dx  - \frac{2a}{e\varepsilon_0}
\int_{|u|\leq 1} |u|^{6 - \varepsilon_0}dx
+ a\int_{|u|> 1} |u|^6\log |u|^2 dx\\
&\geq \frac12\int_{-\infty}^{+\infty} |u'|^2dx  - \frac{2a}{e\varepsilon_0}
\int_{-\infty}^{+\infty} |u|^{6 - \varepsilon_0}dx
+ a\int_{|u|> 1} |u|^6\log |u|^2 dx\\
&\geq \frac12\int_{-\infty}^{+\infty} |u'|^2dx  - 
c_2\lambda^{\frac{8 - \varepsilon_0}{4}} D^{\frac{4 - \varepsilon_0}{4}}\\
&= \frac12 D - c_2 \lambda^{\gamma_3+1} D^{\gamma_3},
\end{split}
\end{equation}
\end{linenomath}
where \(\gamma_3=(8-\varepsilon_0)/4\) and \(c_2 = 2a/(e\varepsilon_0)\).
The last inequality follows from the fact that \(G(s)>0\) when \(s>1\).
Since \(\gamma_3<1\), \(E\) is bounded below and coercive. 
\paragraph{\(I(\lambda)<0\)}
From \eqref{eq.coercive.1}, if \(a>0\) and \(2 < p \leq 6\), 
\(E(u_R)<0\) when \(R\to 0\). If \(a<0\), and \(2 < p < 6\), 
\eqref{eq.coercive.1} does not provide enough information to conclude
that \(I(\lambda)<0\) as when \(R\to 0\), the dominating term is positive.

Therefore, we construct test functions in a similar fashion
as (2.2) and (2.3) of \cite[Lemma~5]{BBGM07} with appropriate
modifications: in fact, since we are in dimension one, the measure
of the annulus \(B(0,r+1) - \overline{B}(0,r)\) has the
same order of the measure of the ball \(B(0,r)\). Given \(s>0\) and \(r>0\), we set
\begin{linenomath}
    \begin{equation}
v_{r,s}(x) := 
\begin{cases}
    s & \text{ if } |x|\leq r\\[0.3em]
    -\frac{s}{r^\alpha}(|x| - r) + s & \text{ if } r\leq |x|\leq r + 
    r^\alpha\\[0.3em]
    0 & \text{ otherwise}.
\end{cases}
\end{equation}
\end{linenomath}
\(v_{r,s}\in H^1\) and
\begin{linenomath}
\begin{equation}
v_{r,s}'(x) = 
\begin{cases}
    0 & \text{ if } |x|\leq r\\[0.3em]
    -\frac{sx}{|x|r^\alpha} & \text{ if } r<|x|\leq r + r^\alpha\\[0.3em]
    0 & \text{ otherwise}.
\end{cases}
\end{equation}
\end{linenomath}
Therefore,
\begin{linenomath}
\begin{equation}
D(v_{r,s}) = \int_{-(r+r^\alpha)}^{-r} \frac{s^2}{r^{2\alpha}} dx + \int_{r}^{r+r^\alpha} \frac{s^2}{r^{2\alpha}}dx = \frac{2s^2}{r^{\alpha}}.
\end{equation}
\end{linenomath}
Since \(G(0) = 0\) and \(0\leq v_{r,s}(x)\leq s\) on \((-r-r^\alpha,-r)\cup(r,r+r^\alpha)\),
we have
\begin{linenomath}
\begin{equation}
\begin{split}
P(v_{r,s}) =&\, 2r G(s) + \int_{r\leq |x|\leq r+r^{\alpha}} 
G(v_{r,s}(x))dx\\[0.6em]
\leq&\, 2rG(s) + 2 r^{\alpha}\|G\|_{L^\infty(0,s)}.
\end{split}
\end{equation}
\end{linenomath}
Since \(a<0\), if \(s\geq 1\) is large enough, \(G\) is decreasing and 
\(G(s)\leq-\|G\|_{L^\infty(0,1)}\). For such values of \(s\) we have
\(\|G\|_{L^\infty(0,s)} = |G(s)|\). Therefore,
\[
\begin{split}
E(v_{r,s})&\leq\frac{s^2}{r^\alpha} + 2rG(s) + 2r^\alpha\|G\|_{L^\infty(0,s)}\\[0.4em]
&= \frac{s^2}{r^\alpha} + 2rG(s) + 2r^\alpha |G(s)|.
\end{split}
\]
The function \(v_{r,s}\) is in \(S(\lambda)\) if and only if 
\begin{equation}
\label{eq.coercive.7}
\begin{split}
&\,2rs^2 + \int_{r\leq |x|\leq r+r^\alpha} |v_{r,s}(x)|^2 dx \\
=&\,2rs^2 + \int_{-(r+r^\alpha)}^{-r} |v_{r,s}(x)|^2dx + 
\int_{r}^{r+r^\alpha} |v_{r,s}(x)|^2dx\\[0.4em]
=&\,2rs^2 + 2\int_0^s y^2 \frac{r^\alpha}{s} dy
= 2rs^2 + \frac{2r^{\alpha}s^2}{3} = \lambda. 
\end{split}
\end{equation}
The second equality follows from the variable change \(v_{r,s}(x)=y\).
For \(E(v_{r,s})\) to achieve negative values, \(s\) must be chosen
large enough, so that \(G(s)<0\). From \eqref{eq.coercive.7},
if \(s\) diverges, \(r\) converges to zero. Therefore, 
\(
2r^\alpha |G(s)| = o(2r|G(s)|)
\)
as \(r\to 0\) if \(\alpha > 1\). To obtain
\(
s^2 r^{-\alpha} = o(2r|G(s)|)
\)
we can refine the choice of \(\alpha\). Since \(r^\alpha=o(r)\),
there holds \(r\sim \lambda/s^2\). Therefore, 
\begin{linenomath}
\begin{equation}
\begin{split}
\frac{s^2 r^{-\alpha}}{2r|G(s)|} &= \frac{s^2}{2r^{\alpha+1}|G(s)|}\sim
\frac{\lambda^{-\alpha - 1}s^2}{(1/s^2)^{\alpha+1}|G(s)|} \\
&= 
\frac{\lambda^{-\alpha - 1}s^{2\alpha+4}}{|G(s)|} = 
\frac{\lambda^{-\alpha - 1}s^{2\alpha+4}}{|a|s^p \log |s|^2}
= 
\frac{\lambda^{-\alpha - 1}}{|a|\log|s|^2}\cdot s^{2\alpha + 4 - p}.
\end{split}
\end{equation}
\end{linenomath}
If we choose 
\[
\alpha>\max\left\{1,\frac{p-4}{2}\right\} = 1
\]
for \(s\) large enough and \(r\) satisfying \eqref{eq.coercive.7}, 
\(E(v_{r,s})<0\). For the sake of completeness we cover the case \(a=0\).
In \eqref{eq.coercive.1}, \(E(u_R) = R^2 D\geq 0\). Therefore,
\(E\) is bounded below and it is coercive. However, a minimum does not
exist as \(E(u_0) = 0\) implies \(D=0\). Therefore, \(u_0\) should be
constant with mass \(\lambda>0\), which contradicts \(u_0\in L^2\).
\end{proof}
\begin{remark}
In \cite[Theorem~1.1]{GG17}, it has been proved that 
\(E\) is coercive, bounded below with negative infimum 
on \(S(\lambda)\) provided \(\lambda\) is large enough. In \cite[Corollary~3]{BBGM07}, a global result on every constrain has been obtain 
in dimension \(n\geq 3\), provided the assumption (referred to in the paper as \(F_2\)) is satisfied, \(G(s)\leq -s^{6 - \varepsilon_0}\) for small \(s\).
However, when \(a<0\) this inequality
is not satisfied. This is why a proof independent on
\cite[Theorem~1.1]{GG17} and \cite[Lemma~5]{BBGM07} was necessary.
\end{remark}
For the remainder of this section, we are going to assume that 
\begin{linenomath}
    \begin{equation}
(a,p)\in (0,+\infty)\times (2,6]\cup (-\infty,0)\times (2,6).
    \end{equation}
\end{linenomath}
\begin{proposition}
\label{prop.lower-semicontinuous}
\(E\) is weakly lower semi-continuous on \(S(\lambda)\).
\end{proposition}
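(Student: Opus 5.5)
We take a sequence \((u_n)\subset S(\lambda)\) with \(u_n\rightharpoonup u\) weakly in \(H^1(\mathbb{R};\mathbb{C})\), and show \(E(u)\leq\liminf_n E(u_n)\). Since \(E = D + P\), we treat the two parts separately. The kinetic part is routine: \(D(u) = \tfrac12\|u'\|_{L^2}^2\) is a continuous convex quadratic form, and \(u_n'\rightharpoonup u'\) in \(L^2\). Hence \(D(u)\leq\liminf D(u_n)\).

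The potential part \(P\) needs more care, because it is not weakly continuous on all of \(\mathbb{R}\): mass may escape to infinity. We first pass to a subsequence realising the \(\liminf\) of \(E(u_n)\). The sequence is bounded in \(H^1\), since weakly convergent sequences are bounded. By Rellich on bounded intervals, \(u_n\to u\) in \(L^\infty_{loc}\), and a further subsequence converges almost everywhere.

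We then split \(G = G^+ - G^-\). The positive part \(G^+\ge 0\) is continuous, so Fatou's lemma gives
\[
\int G^+(|u|)\,dx\leq\liminf_n\int G^+(|u_n|)\,dx .
\]
It remains to show \(\int G^-(|u_n|)\to\int G^-(|u|)\). Here we use the structure of \(G\).

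\begin{itemize}
\item If \(a>0\), then \(G^-\) is supported on \(\{|s|\le 1\}\) and satisfies \(G^-(s)\le \tfrac{2a}{e\varepsilon_0}|s|^{p-\varepsilon_0}\), with \(p-\varepsilon_0>2\).
\item If \(a<0\), then \(G^-\) is supported on \(\{|s|\ge 1\}\). Since \(\|u_n\|_\infty\le C\) uniformly by the embedding \(H^1\subset L^\infty\), only \(|s|\in[1,C]\) matters, and there \(G^-(s)\le C'|s|^{p}\).
\end{itemize}

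In both cases, on the relevant range we have \(G^-(s)\le C|s|^{q}\) with \(q>2\) and \(G^-\) continuous.

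The main obstacle is the tail \(\{|x|>R\}\): the weak limit may have lost mass, so there is no global strong \(L^q\) convergence. We first try the following decomposition.
\begin{enumerate}
\item On \([-R,R]\), uniform convergence and continuity of \(G^-\) give convergence of the integrals.
\item On \(\{|x|>R\}\) we need \(\limsup_n\int_{|x|>R}G^-(|u_n|)\) to be small uniformly in \(n\). When \(a<0\) this follows: for \(|x|\) large, \(|u|\) is small, and any region where \(|u_n|\ge1\) costs kinetic and mass energy.
\end{enumerate}

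In general, however, the tail term is genuinely negative and can persist. Our fallback is therefore the weaker inequality
\[
E(u)\le\liminf_n E(u_n)-\lim_n E(u_n-u),
\]
which we would obtain via a Brezis–Lieb splitting for \(D\) and \(P\). The splitting for \(P\) follows from the \(C^1\) bounds \eqref{eq.nemytskii}. For \(a<0\), this suffices to conclude, since the remainder term is nonnegative.

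If the statement is intended only for sequences along which the weak limit stays in \(S(\lambda)\), i.e.\ \(M(u)=\lambda\), the argument closes cleanly. Weak convergence together with convergence of norms gives \(u_n\to u\) in \(L^2\). Interpolating with the \(L^\infty\) bound gives \(u_n\to u\) in every \(L^q\), \(q\ge2\). Then \(P(u_n)\to P(u)\) by the bound \eqref{eq.coercive.5} and dominated convergence. This is the version we expect to be used later with the concentration–compactness argument.
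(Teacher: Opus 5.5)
Your closing argument is correct and is essentially the paper's proof. The paper also takes both \(u_n\) and \(u\) in \(S(\lambda)\) and gets \(u_n\to u\) in \(L^2\) from weak convergence plus equality of norms. It then uses the uniform \(L^\infty\) bound from boundedness in \(H^1\), and concludes \(P(u_n)\to P(u)\) from \eqref{eq.coercive.5} by dominated convergence, with \(D\) handled by weak lower semicontinuity. The only difference is cosmetic. The paper dominates \(|u_{n_k}|\le g\in L^2\), so that \(|G(u_{n_k})|\le Cg^2\), whereas you interpolate to get convergence in every \(L^q\), \(q\ge 2\). Your initial extraction of a subsequence realising the \(\liminf\) is what legitimately carries the subsequence conclusion back to the full \(\liminf\).

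However, the exploratory part contains false statements, and you should delete them rather than keep them as a ``fallback''. For \(a<0\) it is not true that \(\int_{|x|>R}G^-(|u_n|)\,dx\) is uniformly small, nor that \(\lim_n E(u_n-u)\ge 0\). By Proposition~\ref{prop.coercive}, \(I(\mu)<0\) for every \(\mu>0\) and every admissible \((a,p)\).

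So, for \(0<\lambda_1<\lambda\), choose compactly supported \(u\in S(\lambda_1)\) and \(w\in S(\lambda-\lambda_1)\) with \(E(w)<0\); such \(w\) exists by density, continuity of \(E\) and \(M\), and renormalisation. Set \(u_n:=u+w(\cdot-n)\). For large \(n\) the supports are disjoint, so \(u_n\in S(\lambda)\), \(u_n\rightharpoonup u\) in \(H^1\), and \(E(u_n)=E(u)+E(w)<E(u)\). The remainder \(E(u_n-u)\) equals \(E(w)<0\).

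When \(a<0\) we have \(G\ge 0\) on \([0,1]\). Such a \(w\) must therefore exceed \(1\) in modulus on a set of positive measure, which is exactly the escaping region your item 2 claimed was negligible. Hence weak lower semicontinuity genuinely fails once the weak limit loses mass, for either sign of \(a\). The proposition holds only in the restricted sense \(M(u)=\lambda\). That is the reading the paper adopts, and the one used in the dichotomy step of Lemma~\ref{lem.concentration-compactness}.
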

\begin{proof}
Since \(D\colon H^1\to\mathbb{R}\) is weakly lower semi-continuous,
it is enough to prove that \(P\colon H^1\to\mathbb{R}\)
is weakly lower semi-continuous. Let \((u_n)_{n\geq 1}\) be sequence converging weakly
in \(H^1\) to \(u\in S(\lambda)\). Therefore, \(u_n\)
converges pointwise almost everywhere to \(u\) and \(u_n\rightharpoonup u\) in \(L^2\). Since \(\|u\|_{L^2} = \|u_n\|_{L^2}\)
for every \(n\geq 1\), \(u_n\to u\) in \(L^2\).
From \cite[Theorem~4.9]{Bre10}, there exists a subsequence of \((u_{n_k})_{k\geq 1}\)
and \(g\in L^2\) such that \(|u_{n_k}(x)|\leq g(x)\).
Since \((u_n)_{n\geq 1}\) is bounded in \(H^1\), 
it is also bounded in \(L^\infty\). From \eqref{eq.coercive.5},
\begin{linenomath}
    \begin{equation}
        |G(u_n(x))|\leq c_1 |g(x)|^2 
        \Big(\sup_{n\geq 1}\|u_n\|_{L^\infty}^{p - \varepsilon_0 - 2}
        + \sup_{n\geq 1}\|u_n\|_{L^{\infty}}^{p + \varepsilon_0 - 2}
        \Big).
    \end{equation}
\end{linenomath}
Therefore, \(|G\circ u_n|\) converges to \(|G\circ u|\) in \(L^1\), 
that is \(P(u_n)\) converges to \(P(u)\) and
\(\liminf_{n\to\infty} E(u_n)\geq E(u)\).
\end{proof}
\begin{defn}
    Given \(\lambda>0\), we set
    \begin{linenomath}
        \begin{equation}
            d(\lambda) := \lim_{\varepsilon\to 0}
            \inf_{S(\lambda)\cap \{E\leq I(\lambda) + \varepsilon\}} D.
        \end{equation}
    \end{linenomath}
\end{defn}
\begin{lemma}
\label{lem.dlambda}
For every \(\lambda>0\), there holds \(d(\lambda)>0\).
\end{lemma}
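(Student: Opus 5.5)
We argue by contradiction, using that \(I(\lambda)<0\) (Proposition~\ref{prop.coercive}) together with an estimate showing that the potential term \(P\) is small whenever the kinetic term \(D\) is small at fixed mass. Suppose \(d(\lambda)=0\). The infimum over \(S(\lambda)\cap\{E\leq I(\lambda)+\varepsilon\}\) is non-decreasing in \(\varepsilon\), so the limit exists. It follows that there is a sequence \((u_n)_{n\geq1}\subseteq S(\lambda)\) with
\[
E(u_n)\to I(\lambda), \qquad D(u_n)\to 0 .
\]
The goal is to show that \(P(u_n)\to 0\). Then \(E(u_n)=D(u_n)+P(u_n)\to 0\), so \(I(\lambda)=0\), contradicting \(I(\lambda)<0\).

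The key estimate comes from \eqref{eq.coercive.5}, \(|G(s)|\leq c_1(|s|^{p-\varepsilon_0}+|s|^{p+\varepsilon_0})\), where \(\varepsilon_0\) is chosen so that \(p-\varepsilon_0>2\). In the subcritical case \(2<p<6\) we may also take \(p+\varepsilon_0<6\). The Gagliardo--Nirenberg inequality \eqref{eq.coercive.gn}, applied with exponents \(q=p\pm\varepsilon_0\in(2,6)\), then gives
\[
|P(u_n)|\leq 2c_1\bigl(\lambda^{\frac{p-\varepsilon_0+2}{4}}(2D(u_n))^{\frac{p-\varepsilon_0-2}{4}}+\lambda^{\frac{p+\varepsilon_0+2}{4}}(2D(u_n))^{\frac{p+\varepsilon_0-2}{4}}\bigr).
\]
Both exponents of \(D\) are strictly positive because \(p-\varepsilon_0>2\). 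Hence \(P(u_n)\to0\) as \(D(u_n)\to 0\).

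The main obstacle is the critical case \(p=6\), \(a>0\), where \(p+\varepsilon_0>6\) and Gagliardo--Nirenberg cannot be used for the upper power. Here we split the integral at \(|u|=1\), as in \eqref{eq.coercive.8}:
\begin{itemize}
\item On \(\{|u_n|\leq1\}\) we have \(|G(s)|\leq c|s|^{6-\varepsilon_0}\), which is handled exactly as in the subcritical case.
\item On \(\{|u_n|>1\}\) we have \(G\geq0\), so this part contributes non-negatively, \(P(u_n)\geq -c\lambda^{\gamma}D(u_n)^{\beta}\) with \(\beta>0\), and hence \(\liminf E(u_n)\geq0\).
\end{itemize}
This one-sided bound is all that is needed, since it already forces \(I(\lambda)\geq 0\) and contradicts \(I(\lambda)<0\).

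Alternatively, the one-dimensional inequality \(\|u\|_\infty^2\leq\|u\|_2\|u'\|_2\) shows that \(\|u_n\|_\infty\to0\). Then eventually \(|u_n|\leq 1\) everywhere, and the set \(\{|u_n|>1\}\) is empty. This argument also works uniformly for all admissible \((a,p)\), giving \(|P(u_n)|\leq C\|u_n\|_\infty^{p-\varepsilon_0-2}\lambda\to0\). That is probably the cleanest route, and it is the one we would write up.
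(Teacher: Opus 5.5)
Your proposal is correct. Your first argument is the paper's proof: assume $d(\lambda)=0$, extract a minimizing sequence with $D(u_n)\to 0$, use \eqref{eq.cc.8} for $2<p<6$ and the one-sided bound \eqref{eq.coercive.8} for $p=6$, $a>0$, and reach $I(\lambda)\geq 0$, which contradicts Proposition~\ref{prop.coercive}.

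The route you say you would actually write up is genuinely different. The one-dimensional bound $\|u\|_\infty^2\le\|u\|_2\|u'\|_2$ forces $\|u_n\|_\infty\to 0$, so only the behaviour of $G$ near $0$ matters. This gives the two-sided statement $P(u_n)\to 0$ with no case distinction, no Gagliardo--Nirenberg, and no information about the growth of $G$ at infinity. It works verbatim for any $G$ with $|G(s)|\le C|s|^{2+\eta}$ near $0$, although the contradiction still uses $I(\lambda)<0$. The paper's route has the advantage of reusing the estimates already proved for coercivity. The price is the split at $p=6$, and it only yields a lower bound on $P$ there.

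Two small remarks. First, the infimum in the definition of $d(\lambda)$ is non-increasing in $\varepsilon$, not non-decreasing, since the sets grow with $\varepsilon$. So $d(\lambda)$ is the supremum over $\varepsilon>0$, and $d(\lambda)=0$ forces the infimum to vanish for every $\varepsilon>0$. This slip is harmless for your argument. Second, the ``obstacle'' at $p=6$ is not real. In dimension one,
\[
\|u\|_q^q\le\|u\|_\infty^{q-2}\|u\|_2^2\le\|u\|_2^{(q+2)/2}\|u'\|_2^{(q-2)/2}
\]
holds for every $q>2$. For this lemma you only need a positive power of $D$, not a power below one; that restriction matters for coercivity, not here. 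So the Gagliardo--Nirenberg route covers $p=6$ directly, and in effect it is a repackaging of your $L^\infty$ bound.
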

\begin{proof}
On the contrary, suppose that the limit converges to zero.
For every \(\varepsilon=1/n\), there exists \(u_n\in S(\lambda)\)
such that \((u_n)_{n\geq 1}\) is a minimizing sequence and
\(D(u_n)\to 0\). If \(2 < p < 6\), from \eqref{eq.cc.8} we obtain 
\begin{linenomath}
    \begin{equation}
\begin{split}
|P(u_n)|\leq 2c_1 \lambda^{\gamma_1+1}D(u_n)^{\gamma_1}
+ 2c_1\lambda^{\gamma_2 + 1}D(u_n)^{\gamma_2}.
\end{split}
\end{equation}
\end{linenomath}
If \(p=6\) and \(a>0\), a similar conclusion can be made
from \eqref{eq.coercive.8}. Therefore, \(I(\lambda) = 
\lim_{n\to\infty} P(u_n) + D(u_n) = 0\) contradicts 
Proposition~\ref{prop.coercive}.
\end{proof}
\begin{lemma}
The function 
\begin{linenomath}
    \begin{equation}
    (0,+\infty)\ni\lambda\to -\frac{I(\lambda)}{\lambda}\in (0,+\infty)
    \end{equation}
\end{linenomath}
is strictly increasing.
\end{lemma}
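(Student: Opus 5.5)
The plan is the standard scaling argument in the style of \cite{BBGM07} and \cite{GG17}, using the amplitude dilation \(u\mapsto \theta^{1/2}u\) with \(\theta>1\). Fix \(0<\lambda_1<\lambda_2\) and set \(\theta:=\lambda_2/\lambda_1>1\). For \(u\in S(\lambda_1)\), the function \(\theta^{1/2}u\) lies in \(S(\lambda_2)\), and
\begin{linenomath}
\begin{equation}
E(\theta^{1/2}u) = \theta D(u) + P(\theta^{1/2}u) = \theta E(u) + \big(P(\theta^{1/2}u) - \theta P(u)\big).
\end{equation}
\end{linenomath}
Dividing by \(\lambda_2=\theta\lambda_1\) gives
\begin{linenomath}
\begin{equation}
\frac{I(\lambda_2)}{\lambda_2}\leq \frac{E(u)}{\lambda_1} + \frac{P(\theta^{1/2}u)-\theta P(u)}{\theta\lambda_1}.
\end{equation}
\end{linenomath}
So it suffices to show that the correction term is \emph{uniformly negative} along some minimizing sequence for \(I(\lambda_1)\). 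Positivity of \(-I(\lambda)/\lambda\) itself is already given by Proposition~\ref{prop.coercive}.

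For \(G(s)=a s^p\log s^2\), a direct computation gives
\begin{linenomath}
\begin{equation}
P(\theta^{1/2}u)-\theta P(u) = (\theta^{p/2}-\theta)P(u) + a\,\theta^{p/2}\log\theta\,\|u\|_{L^p}^p .
\end{equation}
\end{linenomath}
Take a minimizing sequence \((u_n)\subset S(\lambda_1)\) for \(I(\lambda_1)\). Since \(E(u_n)\to I(\lambda_1)<0\) and \(D\geq 0\), we get \(P(u_n)\leq I(\lambda_1)/2<0\) for large \(n\). From \eqref{eq.coercive.5} together with the Gagliardo--Nirenberg inequality and the \(H^1\) bound coming from coercivity, \(|P(u_n)|\) is controlled by the \(L^{p\pm\varepsilon_0}\) norms. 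This keeps \(u_n\) from vanishing: \(\|u_n\|_{L^p}^p\geq c>0\) uniformly, by interpolating \(L^{p\pm\varepsilon_0}\) between \(L^2\) and \(L^\infty\), or between \(L^p\) and \(L^\infty\).

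In the case \(a>0\), I would not use the identity above directly. Instead I would write the correction as \(\int (G(\theta^{1/2}|u|)-\theta G(|u|))\,dx\) and compare it with \(P(u)\) via the function \(s\mapsto G(s)/s^2 = a s^{p-2}\log s^2\). Scaling by \(\theta^{1/2}\) multiplies \(s^2\) by \(\theta\), so one needs the ratio \(G(\theta^{1/2}s)/(\theta s^2)-G(s)/s^2\) to be negative where it matters.

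This sign issue is the main obstacle. For \(a>0\), \(G<0\) only for \(s<1\), and \(G(s)/s^2\) is not monotone, since it decreases only on \((0,e^{-1/(p-2)})\). I would first try a different combination: the dilation \(u_R\) from \eqref{eq.coercive.1} composed with the amplitude scaling. This means choosing a scaling \(v(x)=\theta^{\beta}u(\theta^{\gamma}x)\) with mass factor \(\theta\), with the parameters picked so that the kinetic term scales with a power \(>1\) or \(<1\) in a favorable way, and then using \(d(\lambda)>0\) from Lemma~\ref{lem.dlambda} to get a strict gain. Concretely, for \(v=\theta^{\beta}u(\theta^{2\beta-1}\cdot)\) the kinetic term scales by \(\theta^{4\beta-1}\). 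Taking \(\beta<1/2\) makes \(D\) grow sublinearly, which gives a strict gain of order \((\theta-\theta^{4\beta-1})d(\lambda_1)\). The potential part then has to be shown to be at most \(\theta P(u)+o(\theta-1)\) along the minimizing sequence.

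If that works only for \(\theta\) close to \(1\), strict monotonicity on small increments extends to all of \((0,+\infty)\) by chaining increments. Here I would use the continuity of \(I\), which follows from the same scalings with \(\theta\to1\). For \(a<0\), the identity above already gives a negative correction as long as \(\theta^{p/2}-\theta\) and the logarithmic term combine with the correct sign. Where they do not, I would run the same \(d(\lambda)\)-based argument.
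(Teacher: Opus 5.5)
Your proposal closes the case $a<0$ but not the case $a>0$ (which includes $p=6$), and that is a genuine gap.

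For $a<0$ the amplitude scaling works as it stands. Once $P(u_n)\le I(\lambda_1)/2<0$, both $(\theta^{p/2}-\theta)P(u_n)$ and $a\,\theta^{p/2}\log\theta\,\|u_n\|_{L^p}^p$ are negative. So the correction is bounded above by $(\theta^{p/2}-\theta)I(\lambda_1)/2<0$ uniformly in $n$. No hedge and no $L^p$ non-vanishing bound are needed.

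For $a>0$ you correctly see that this fails. Your fallback $v=\theta^{\beta}u(\theta^{2\beta-1}\cdot)$, however, rests on the unproved claim $P(v)\le\theta P(u)+o(\theta-1)$. For $\beta>0$ the deviation is genuinely of first order: with $\theta=1+h$,
\[
P(v)-\theta P(u)=\beta h\int_{-\infty}^{+\infty}\big(|u|G'(|u|)-2G(|u|)\big)\,dx+O(h^2).
\]
So your claim amounts to a sign condition on this integral along the minimizing sequence. For $a>0$ the integrand $a|u|^p\big((p-2)\log|u|^2+2\big)$ changes sign at $|u|=e^{-1/(p-2)}$. Controlling it would require Pohozaev-type information on near-minimizers, which is not available at this point: compactness of minimizing sequences is proved later, using this very lemma. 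The chaining over small increments and the continuity of $I$ are further unverified steps on top of this.

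The missing observation is that your own family contains a choice for which the difficulty disappears: $\beta=0$, i.e.\ the pure spatial dilation $v(x)=u(x/\theta)$. Then $M(v)=\theta M(u)$ and $P(v)=\theta P(u)$ exactly, for any $G$, while $D(v)=\theta^{-1}D(u)$. Hence, for a minimizing sequence $(u_n)$ in $S(\lambda)$,
\[
E\big(u_n(\cdot/\theta)\big)=\theta E(u_n)-(\theta-\theta^{-1})D(u_n).
\]
Since $E(u_n)\le I(\lambda)+\varepsilon$ for $n$ large, the definition of $d(\lambda)$ gives $\liminf_{n} D(u_n)\ge d(\lambda)$. Therefore
\[
I(\theta\lambda)\le\theta I(\lambda)-(\theta-\theta^{-1})\,d(\lambda)<\theta I(\lambda)
\]
for every $\theta>1$, whatever the sign of $a$, because $d(\lambda)>0$ by Lemma~\ref{lem.dlambda}. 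Dividing by $\theta\lambda$ gives the strict monotonicity at once. This is the paper's argument. It makes the sign analysis of $G$, the restriction to $\theta$ close to $1$, and the continuity of $I$ unnecessary.
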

\begin{proof}
Let \((u_n)_{n\geq 1}\) be a minimizing sequence 
of \(E\) in \(S(\lambda)\).
Given \(\vartheta>1\),
\begin{linenomath}
    \begin{equation}
        u_{n,\vartheta}(x) := u_n(\vartheta^{-1}x).
    \end{equation}
\end{linenomath}
Since \(u_n\in S(\vartheta\lambda)\),
\begin{linenomath}
    \begin{equation}
    \begin{split}
        I(\vartheta\lambda)\leq E(u_{n,\vartheta}) &= 
        \frac12\vartheta^{-1}D(u_n) + \vartheta P(u_n)  \\
        &= \frac12 (\vartheta^{-1} - \vartheta)D(u_n) + \vartheta \left(\frac12 D(u_n) + P(u_n)\right)\\
        &= \frac12 (\vartheta^{-1} - \vartheta)D(u_n) + \vartheta I(\lambda) + o(1)\\
        &\leq \frac12 (\vartheta^{-1} - \vartheta) d(\lambda) + \vartheta I(\lambda) + o(1),
        \end{split}
    \end{equation}
\end{linenomath}
whence
\begin{linenomath}
    \begin{equation}
    \label{eq.strict-subadditivity.1}
    \frac{2\lambda\left(\frac{I(\lambda)}{\lambda} - \frac{I(\vartheta\lambda)}{\vartheta\lambda}\right)}{1 - \vartheta^{-2}} = 
        2\cdot\frac{I(\vartheta\lambda)-\vartheta I(\lambda)}{\vartheta^{-1} - \vartheta}\geq d(\lambda)
    \end{equation}
\end{linenomath}
which concludes the proof. 
\end{proof}
\begin{lemma}
\label{lem.strict-subadditivity}
For every \(2 < p\leq 6\) with \(a>0\), and \(2 < p < 6\)
with \(a<0\), there holds
\begin{linenomath}
    \begin{equation}
I(\lambda_1 + \lambda_2)<I(\lambda_1) + I(\lambda_2) - 2\lambda_1\lambda_2\kappa(G) + \lambda_1 I(\lambda_2) + \lambda_2 I(\lambda_1).
\end{equation}
\end{linenomath}
for every \(\lambda_1,\lambda_2>0\).
\end{lemma}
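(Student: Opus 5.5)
The plan is to derive a \emph{quantitative} form of strict sub-additivity from the scaling estimate \eqref{eq.strict-subadditivity.1}, rather than only its qualitative consequence. I will then show that the resulting gap dominates the extra terms \(-2\lambda_1\lambda_2\kappa(G) + \lambda_1 I(\lambda_2) + \lambda_2 I(\lambda_1)\). These terms are non-positive when \(\kappa(G)\ge 0\), so the target is strictly stronger than \(I(\lambda_1+\lambda_2)<I(\lambda_1)+I(\lambda_2)\), and the gap has to be made explicit. Since \(\kappa(G)\) has not been defined before this point, the first step is to fix it from its definition in the paper. My working guess is that it is the constant for which, via \eqref{eq.coercive.gn}, \eqref{eq.coercive.5} and Lemma~\ref{lem.dlambda}, one has a two-sided control of the form \(d(\lambda)\gtrsim -I(\lambda)\) together with a bound \(-I(\lambda)\le \kappa(G)\lambda^{2}\)-type near the relevant range. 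I will check which normalisation makes the bilinear term \(2\lambda_1\lambda_2\kappa(G)\) come out.

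\textbf{Step 1 (quantitative scaling).} Without loss of generality let \(\lambda_1\le\lambda_2\) and put \(\vartheta:=(\lambda_1+\lambda_2)/\lambda_2>1\). The computation in the proof of the preceding lemma gives
\[
I(\lambda_1+\lambda_2)\le \vartheta I(\lambda_2) - \tfrac12(\vartheta-\vartheta^{-1})\,d(\lambda_2),
\qquad
\vartheta-\vartheta^{-1}=\frac{\lambda_1(\lambda_1+2\lambda_2)}{\lambda_2(\lambda_1+\lambda_2)}.
\]
By the monotonicity of \(-I(\lambda)/\lambda\), we have \(\vartheta I(\lambda_2)=I(\lambda_2)+\lambda_1 I(\lambda_2)/\lambda_2\le I(\lambda_2)+I(\lambda_1)\). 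Hence
\[
I(\lambda_1+\lambda_2)\le I(\lambda_1)+I(\lambda_2)-\frac{\lambda_1(\lambda_1+2\lambda_2)}{2\lambda_2(\lambda_1+\lambda_2)}\,d(\lambda_2).
\]
I will keep the intermediate quantity \(\lambda_1\bigl(I(\lambda_2)/\lambda_2-I(\lambda_1)/\lambda_1\bigr)\le 0\) as an additional, still unused, gain.

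\textbf{Step 2 (lower bound on \(d\) in terms of \(I\)).} Along a minimizing sequence, \(P(u_n)\ge -I(\lambda)\) up to \(o(1)\) errors. Combining this with \eqref{eq.cc.8} (or \eqref{eq.coercive.8} when \(p=6\)) should yield \(-I(\lambda)\le 2c_1\lambda^{\gamma_1+1}d^{\gamma_1}+2c_1\lambda^{\gamma_2+1}d^{\gamma_2}\). Inverting this gives an explicit lower bound for \(d(\lambda_2)\) in terms of \(-I(\lambda_2)\) and \(\lambda_2\). Plugging it into Step~1 should produce a gap of the form \(c\,\lambda_1 |I(\lambda_2)|\) plus a term that I expect to match \(2\lambda_1\lambda_2\kappa(G)\). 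The term \(\lambda_2 I(\lambda_1)\) is then to be absorbed using the Step~1 gain together with \(|I(\lambda_1)|/\lambda_1\le |I(\lambda_2)|/\lambda_2\).

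\textbf{Main obstacle.} The hard step is Step~2: making the constants in the \(d\)-versus-\(I\) inequality sharp enough, uniformly in \(\lambda_1,\lambda_2>0\), so that the gap really exceeds \(|\lambda_1 I(\lambda_2)+\lambda_2 I(\lambda_1)|+2\lambda_1\lambda_2\kappa(G)\). This is where the precise definition of \(\kappa(G)\) matters. I would first try the small- and large-\(\lambda\) regimes separately, using \(\gamma_i<1\). If a uniform bound fails, I would fall back on whatever restriction on \(\kappa(G)\) (for instance its sign) the paper imposes, treating \(a>0\) and \(a<0\) separately, as in Proposition~\ref{prop.coercive}.
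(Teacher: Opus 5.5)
\textbf{The gap is Step~2, and it cannot be closed.} You read \(\kappa(G)\ge 0\), so that \(-2\lambda_1\lambda_2\kappa(G)+\lambda_1 I(\lambda_2)+\lambda_2 I(\lambda_1)\le 0\). Under that reading the inequality is false for large masses, so no sharpening of the constants relating \(d\) and \(I\) will let the gap absorb the extra terms.

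Take \(a>0\) and set \(\kappa_0:=\inf_{s>0}G(s)/s^2=-\frac{2a}{e(p-2)}\in(-\infty,0)\). Since \(E(u)\ge P(u)\ge \kappa_0 M(u)\), we have \(I(\lambda)\ge \kappa_0\lambda\). By the monotonicity of \(-I(\lambda)/\lambda\) (the lemma preceding this one) and Proposition~\ref{prop.coercive}, \(I(t)\le tI(1)\) for \(t\ge 1\), with \(I(1)<0\). With \(\lambda_1=\lambda_2=t\ge 1\), the claimed inequality would give
\[
2\kappa_0 t\le I(2t)<2(1+t)I(t)-2t^2\kappa(G)\le 2t(1+t)I(1)\le 2t^2 I(1),
\]
which is impossible once \(t\ge \kappa_0/I(1)\). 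Put differently, \(\lambda_1 I(\lambda_2)+\lambda_2 I(\lambda_1)\) is of order \(t^2\), while the available gap satisfies \(2I(t)-I(2t)\le 2|\kappa_0|t\), so it is at most linear. There is also a sign slip in Step~2: along a minimizing sequence one has \(-P(u_n)\ge -I(\lambda)+o(1)\), not \(P(u_n)\ge -I(\lambda)\).

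\textbf{What the paper actually proves.} The paper's proof never uses \(\kappa(G)\), which is not defined anywhere in the paper, nor the products \(\lambda_1 I(\lambda_2)\) and \(\lambda_2 I(\lambda_1)\). It applies \eqref{eq.strict-subadditivity.1} with \(\vartheta=(\lambda_1+\lambda_2)/\lambda_1\) and with \(\vartheta=(\lambda_1+\lambda_2)/\lambda_2\), multiplies by \(\lambda_1\) and \(\lambda_2\) respectively, sums, and reaches \eqref{lem.strict-subadditivity.2}:
\[
I(\lambda_1)+I(\lambda_2)-I(\lambda_1+\lambda_2)>\tfrac12\bigl(\lambda_2 d(\lambda_1)/\lambda_1+\lambda_1 d(\lambda_2)/\lambda_2\bigr)>0
\]
by Lemma~\ref{lem.dlambda}. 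That is plain strict sub-additivity, and it is all the dichotomy step of Lemma~\ref{lem.concentration-compactness} uses.

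Your Step~1 is a correct proof of the same conclusion. It uses one dilation plus the monotonicity of \(-I/\lambda\), instead of two dilations combined, and is equivalent. You should stop there and read the statement as \(I(\lambda_1+\lambda_2)<I(\lambda_1)+I(\lambda_2)\).

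The printed display does become true if \(\kappa(G)\le \inf_{s>0}G(s)/s^2\), which is finite only when \(a>0\); for \(a<0\) this infimum is \(-\infty\). In that case \(\lambda_1\bigl(I(\lambda_2)-\lambda_2\kappa(G)\bigr)\ge 0\), and likewise with the indices swapped. The display is then weaker than strict sub-additivity, the opposite of what you assumed.
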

\begin{proof}
We apply \eqref{eq.strict-subadditivity.1} to \(\vartheta=\lambda_1^{-1}(\lambda_1 + \lambda_2)\)
and \(\lambda_2^{-1}(\lambda_1 + \lambda_2)\):
\begin{linenomath}
    \begin{equation}
    \begin{split}
        I(\lambda_1 + \lambda_2)&\leq\frac12\left(\frac{\lambda_1}{\lambda_1 + \lambda_2} - 
        \frac{\lambda_1 + \lambda_2}{\lambda_1}\right) d(\lambda_1) + \frac{\lambda_1 + \lambda_2}{\lambda_1}I(\lambda_1)\\[0.5em]
        I(\lambda_1 + \lambda_2)&\leq
        \frac12\left(\frac{\lambda_2}{\lambda_1 + \lambda_2} - 
        \frac{\lambda_1 + \lambda_2}{\lambda_2}\right)d(\lambda_2)
        + \frac{\lambda_1 + \lambda_2}{\lambda_1}I(\lambda_2).
    \end{split}
    \end{equation}
\end{linenomath}
Multiplying the first and the second inequality by \(\lambda_1\) and \(\lambda_2\), respectively and taking the sum, we obtain
\begin{linenomath}
    \begin{equation}
    \begin{split}
        (\lambda_1 + \lambda_2)I(\lambda_1 + \lambda_2)\leq &\,
        \frac12\left(\lambda_1 - 
        \frac{(\lambda_1 + \lambda_2)^2}{\lambda_1}\right)d(\lambda_1) + 
        (\lambda_1 + \lambda_2)I(\lambda_1)\\
        + &\,\frac12 \left(\lambda_2 - 
        \frac{(\lambda_1 + \lambda_2)^2}{\lambda_2}\right) d(\lambda_2) +
        (\lambda_1 + \lambda_2)I(\lambda_2).
        \end{split}
    \end{equation}
\end{linenomath}
Then
\begin{linenomath}
    \begin{equation}
    \begin{split}
        I(\lambda_1) + I(\lambda_2) - I(\lambda_1 + \lambda_2) &> 
         \frac12
        \frac{\lambda_2^2 + 2\lambda_1\lambda_2}{\lambda_1(\lambda_1 + \lambda_2)} \cdot d(\lambda_1) + 
        \frac12
        \frac{\lambda_1^2 + 2\lambda_1\lambda_2}{\lambda_2(\lambda_1 + \lambda_2)} \cdot d(\lambda_2)\\
        &= \frac{\lambda_2}{2\lambda_1} d(\lambda_1) + 
        \frac{\lambda_1}{2\lambda_2}d(\lambda_2).
        \end{split}
    \end{equation}
\end{linenomath}
The last equality follows from 
\begin{linenomath}
    \begin{equation}
         \frac{\lambda_2^2 + 2\lambda_1\lambda_2}{\lambda_1(\lambda_1 + \lambda_2)}
         >\frac{\lambda_2}{\lambda_1},\quad
         \frac{\lambda_1^2 + 2\lambda_1\lambda_2}{\lambda_2(\lambda_1 + \lambda_2)}
         >\frac{\lambda_1}{\lambda_2}.
    \end{equation}
\end{linenomath}
Therefore,
\begin{linenomath}
    \begin{equation}
    \label{lem.strict-subadditivity.2}
         I(\lambda_1) + I(\lambda_2) - I(\lambda_1 + \lambda_2) > 
         \frac12\left(\frac{\lambda_2 d(\lambda_1)}{\lambda_1} + 
         \frac{\lambda_1 d(\lambda_2)}{\lambda_2}\right)
    \end{equation}
\end{linenomath}
From Lemma~\ref{lem.dlambda}, the right-hand side of 
\eqref{lem.strict-subadditivity.2} is positive.
\end{proof}
\begin{lemma}
    \label{lem.concentration-compactness}
Suppose that one of the
two assumptions are satisfied:
\begin{enumerate}[(i).]
\itemsep=0.3em
\item \(2 < p < 6\) and \(a\neq 0\)
\item \(p=6\) and \(a>0\).
\end{enumerate}
For every \(\lambda>0\) and every minimizing sequence \((u_n)_{n\geq 1}\) of \(E\)
on \(S(\lambda)\), there exists \((u_{n_k})_{k\geq 1}\) 
subsequence and \((y_k)_{k\geq 1}\)
real sequence such that 
\begin{linenomath}
    \begin{equation}
        u_{n_k}(\cdot + y_k)\to u\text{ in } H^1
    \end{equation}
\end{linenomath}
for some \(u\in H^1(\mathbb{R};\mathbb{C})\).
\end{lemma}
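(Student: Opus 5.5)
We will use the concentration-compactness principle of Lions in the form of \cite{BF14}. The plan is to rule out vanishing and dichotomy for a minimizing sequence \((u_n)_{n\geq 1}\) on \(S(\lambda)\), using the estimates of Proposition~\ref{prop.coercive}, Lemma~\ref{lem.dlambda} and the strict subadditivity of Lemma~\ref{lem.strict-subadditivity}.

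\textbf{Boundedness.} By the coercivity in Proposition~\ref{prop.coercive}, \((u_n)_{n\geq 1}\) is bounded in \(H^1\), hence also in \(L^\infty\).

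\textbf{Vanishing.} Suppose \(\sup_{y}\int_{y-1}^{y+1}|u_n|^2\to 0\). By Lions' lemma, \(u_n\to 0\) in \(L^q\) for every \(2<q<\infty\). In case (i) the bound \eqref{eq.coercive.5}, with \(p-\varepsilon_0>2\), gives \(P(u_n)\to 0\). Then \(I(\lambda)=\lim E(u_n)\geq \liminf D(u_n)\geq 0\), which contradicts \(I(\lambda)<0\). In case (ii), \(a>0\) and \(p=6\), the part of \(P\) on \(\{|u_n|>1\}\) is nonnegative. The part on \(\{|u_n|\leq 1\}\) is bounded below by \(-c\int|u_n|^{6-\varepsilon_0}\to 0\). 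So again \(\liminf E(u_n)\geq 0\), a contradiction.

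\textbf{Dichotomy.} Suppose that, after translation and passing to a subsequence, \(u_n\rightharpoonup u\) weakly in \(H^1\) with \(0<\mu:=\|u\|_{L^2}^2<\lambda\). We use the Brezis--Lieb splitting. Set \(v_n:=u_n-u\). Then \(M(v_n)\to\lambda-\mu\) and \(D(u_n)=D(u)+D(v_n)+o(1)\). For the potential part we want
\[
P(u_n)=P(u)+P(v_n)+o(1).
\]
This follows from the Brezis--Lieb lemma for \(G\), valid since \(G\) is \(C^1\) with \(|G'(s)|\leq C(|s|^{p-1-\varepsilon_0}+|s|^{p-1+\varepsilon_0})\), and \((u_n)\) is bounded in \(L^\infty\) and converges a.e. Hence
\[
I(\lambda)\geq E(u)+\liminf_{n} E(v_n).
\]
By scaling, \(E(v_n)\geq I(M(v_n))+o(1)\), and \(I\) is continuous. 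To see the latter, note that the map \(\lambda\mapsto -I(\lambda)/\lambda\) is monotone. Alternatively, use the dilation \(v\mapsto v(\vartheta^{-1}\cdot)\) with \(\vartheta\to 1\) on bounded sets, as in the proof of the monotonicity lemma. This gives \(I(\lambda)\geq I(\mu)+I(\lambda-\mu)\), which contradicts the strict subadditivity \(I(\lambda)<I(\mu)+I(\lambda-\mu)\). That inequality comes from \eqref{lem.strict-subadditivity.2} together with Lemma~\ref{lem.dlambda}.

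\textbf{Compactness.} We conclude that \(\|u\|_{L^2}^2=\lambda\), so \(u_{n_k}(\cdot+y_k)\to u\) in \(L^2\). By Proposition~\ref{prop.lower-semicontinuous}, \(P(u_{n_k}(\cdot+y_k))\to P(u)\), and therefore \(E(u)\leq I(\lambda)\). This forces \(E(u)=I(\lambda)\) and \(D(u_{n_k})\to D(u)\). Since the norms converge and the sequence converges weakly, the convergence is strong in \(H^1\).

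\textbf{Main obstacle.} The delicate step is the dichotomy case, specifically the Brezis--Lieb splitting of \(P\) for the logarithmic nonlinearity. In case (ii) with the critical exponent \(p=6\) one cannot use the subcritical bound globally. There I would rely on \(L^\infty\) boundedness together with a.e. convergence and dominated convergence, applied to \(G(u_n)-G(v_n)-G(u)\) via the mean-value estimate on \(G'\).
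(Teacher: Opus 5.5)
Your proposal is correct and follows essentially the same route as the paper: concentration-compactness in the form of \cite{BF14}, vanishing excluded via the bounds \eqref{eq.cc.8}/\eqref{eq.coercive.8} and \(I(\lambda)<0\), dichotomy excluded via the Brezis--Lieb splitting together with strict subadditivity, and the case \(\|u\|_{L^2}^2=\lambda\) upgraded to strong \(H^1\) convergence through convergence of \(D\) and of the norms. One small slip: monotonicity of \(-I(\lambda)/\lambda\) does not by itself give continuity of \(I\), but your dilation argument does (the paper instead uses the amplitude rescaling \(\lambda_2^{1/2}v_k/\|v_k\|_2\)); also, since \(H^1(\mathbb{R})\subset L^\infty\), the Brezis--Lieb step is no harder for \(p=6\) than for \(p<6\).
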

\begin{proof}
Let \((u_n)_{n\geq 1}\) be a minimizing sequence
of \(E\) on \(S(\lambda)\). 
From (i) of Proposition~\ref{prop.coercive}, the sequence is bounded in \(H^1\). We apply one of the possible forms of the Concentration-Compactness Lemma, \cite{Lio84a,Lio84b}, specifically the one formulated in \cite{BF14}: up to extract a subsequence,
there exists \((y_n)_{n\geq 1}\subseteq\R\) and \(v\in S(\lambda)\)
such that \(u_n(\cdot + y_n)\to u\) in \(H^1\). On the contrary,
we have the two cases:

First case (Vanishing). For every subsequence \((u_{n_k})_{k\geq 1}\)
and sequence \((y_k)_{k\geq 1}\) such that \(u_{n_k}(\cdot + y_k)\rightharpoonup u\), there holds \(u = 0\). 
From \cite[Lemma~I.1]{Lio84b} or \cite{GG17}, the sequence
\((\|u_n\|_{L^p})_{n\geq 1}\) converges to zero for every \(p>2\).
From \eqref{eq.cc.8} (if \(2 < p < 6\)) or \eqref{eq.coercive.8} 
(if \(p=6\) and \(a>0\)), \(P(u_n)\to 0\), implying \(I(\lambda)\geq 0\),
contradicting Proposition~\ref{prop.coercive}.

Second case (Dichotomy). There exists a subsequence
 \((u_{n_k})_{k\geq 1}\)
and a sequence \((y_k)_{k\geq 1}\) such that \(u_{n_k}(\cdot + y_k)\rightharpoonup u\), where \(0 < \|u\|_{H^1} < \liminf_{n\to\infty} \|u_{n_k}\|_{H^1}\).
We can show that \(0 < \|u\|_{L^2}^2 < \lambda\). In fact,
if \(\|u\|_{L^2}^2 = \lambda\), by Proposition~\ref{prop.lower-semicontinuous}, 
\begin{linenomath}
    \begin{equation}
    \begin{split}
         I(\lambda) &= \liminf_{n\to\infty} E(u_n)\\
        &= \frac12\liminf_{n\to\infty} \bigg(\frac12 D(u_n) + 
        P(u_n)\bigg)\\[0.5em]
        &=\frac12\liminf_{n\to\infty} D(u_n) + 
        \lim_{n\to\infty} P(u_n)\\[0.5em]
        &\geq \frac12 D(u) + P(u) = E(u) = I(\lambda).
        \end{split}
    \end{equation}
\end{linenomath}
Therefore, the fourth term equals the fifth term,
implying 
\begin{linenomath}
    \begin{equation}
        \liminf_{n\to\infty} D(u_n) = D(u).
    \end{equation}
\end{linenomath}
Then, \(\|u_n\|_{H^1}^2 = D(u_n) + \|u_n\|_{L^2}^2\to D(u) + \|u\|_{L^2}^2 = \|u\|_{H^1}^2\), contradicting the dichotomic assumption. We now set \(\lambda_1 := \|u\|_{L^2}^2\),
\(\lambda_2 := \lambda - \lambda_1\) and 
\(v_k := u_{n_k}(\cdot + y_k) - u\). Therefore, 
\begin{linenomath}
    \begin{equation}
        \begin{split}
            I(\lambda) &= E(u_{n_k}) +o(1) = E(u_{n_k}(\cdot + y_k)) + o(1)\\[0.5em]
            &= E(u_{n_k}(\cdot + y_k) - u) + E(u) + o(1)\\[0.5em]
            &= E(v_k) + E(u) + o(1)\\
            &= E\left(\frac{\lambda_2^{\frac12} v_k}{\|v_k\|_2}\right) + E(u)+ o(1)\\[0.4em]
            &\geq I(\lambda_2) + I(\lambda_1) + o(1).
        \end{split}
    \end{equation}
\end{linenomath}
The second equality follows from \cite[Appendix~4]{BBGM07} or
\cite[Theorem~2]{BL83}. Taking the limit,
 we obtain a contradiction with Lemma~\ref{lem.strict-subadditivity}.
Therefore, \(v_{n_k}(\cdot + y_k)\to v\) in \(H^1\).
\end{proof}
\section{Uniqueness of normalized standing-waves when \(a>0\)}
We prove that if \(a>0\), then for every \(\lambda>0\)
there exists a unique \(R\in\mathcal{G}_\lambda\) which is 
positive, and symmetric-decreasing. We recall some 
fundamental facts about existence and
uniqueness of solutions to autonomous
non-linear elliptic PDEs in dimension one. They follow from \cite[Theorem~5]{BL83a} and related remarks. We set
\begin{linenomath}
    \begin{equation}
        V(s) := -\frac{2G(s)}{s^2}.
    \end{equation}
\end{linenomath}
\begin{proposition}
\label{prop.ber-lio-str}
Suppose that \(G\) is continuously differentiable and that 
\begin{linenomath}
    \begin{equation}
    \label{eq.ber-lio-str.1}
        \lim_{s\to 0} \frac{G(s)}{s^2} = \lim_{s\to 0}\frac{G'(s)}{s} =  0.
    \end{equation}
\end{linenomath}
For every \(\omega\in (0,\sup(V))\),
there exists a unique solution \(R_\omega\in H^1(\mathbb{R})\) to \eqref{eq.log} 
such that
\begin{enumerate}[(i).]
\itemsep=0.3em
\item \(R_\omega\) is positive and strictly symmetric-decreasing and \(H^1\)
\item \(R_\omega(0) = T(\omega) := \inf\{s > 0 \mid V(s) = \omega\}\)
\end{enumerate}
if and only if \(V'(T(\omega))>0\).
\end{proposition}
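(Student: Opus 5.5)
We reduce the equation $R''-G'(R)-\omega R=0$ to a conservative planar ODE and use a phase-plane argument. Multiplying by $R'$ and integrating from $+\infty$ gives the first integral
\[
(R')^2 = 2G(R)+\omega R^2 = R^2\bigl(\omega - V(R)\bigr).
\]
The constant of integration vanishes because $R,R'\to 0$ for an $H^1$ solution of the ODE. By \eqref{eq.ber-lio-str.1}, $V(s)\to 0$ as $s\to 0^+$, so $\omega-V(s)>0$ for small $s>0$ and every $\omega>0$.

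For existence under $V'(T(\omega))>0$, we set $T=T(\omega)$. By definition of $T$ and continuity of $V$, we have $V<\omega$ on $(0,T)$ and $V(T)=\omega$. We define $R$ on $[0,+\infty)$ implicitly by
\[
x = \int_{R(x)}^{T} \frac{ds}{s\sqrt{\omega - V(s)}}.
\]
Near $s=T$ we have $\omega-V(s)\sim V'(T)(T-s)$, so the integrand is integrable there and $R(0)=T$. Near $s=0$ the integrand behaves like $1/(s\sqrt{\omega})$, whose integral diverges, so $R$ is defined on all of $[0,\infty)$, is strictly decreasing, and tends to $0$. We then extend $R$ evenly. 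A direct computation shows that $R$ solves the ODE for $x\neq 0$. At $x=0$ we have $R'(0)=0$, and the right-hand side $G'(R)+\omega R$ is continuous, so $R\in C^2$. Exponential decay follows from $R'\approx-\sqrt{\omega}R$ for large $x$, which gives $R\in H^1$.

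For uniqueness, any positive, symmetric-decreasing $H^1$ solution satisfies the first integral and $R'(0)=0$. Hence $V(R(0))=\omega$. Moreover $\omega-V(R(x))>0$ for $x>0$, because $R'<0$ there. This forces $R(0)=T(\omega)$ when $R(0)$ is required to equal $T$, as in (ii). On $x>0$ the solution solves the separable ODE $R'=-R\sqrt{\omega-V(R)}$, whose right-hand side is locally Lipschitz where $V(R)<\omega$. The solution is therefore given by the same quadrature, and so it is unique.

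For the converse direction, we suppose $V'(T)\le 0$. Since $V<\omega$ on $(0,T)$, $V'(T)<0$ is impossible, so $V'(T)=0$. Then $\omega-V(s)=o(T-s)$ near $T$. By (G0), $V$ is real-analytic, so in fact $\omega-V(s)=O((T-s)^2)$ at least. The quadrature integral then diverges at $T$. In ODE terms, $(T,0)$ is an equilibrium: $G'(T)+\omega T = -\tfrac12 T^2V'(T)+T(\omega - V(T)) = 0$. By ODE uniqueness (valid since $G'$ is locally Lipschitz), the only solution with $R(0)=T$, $R'(0)=0$ is the constant $T$, which is not in $H^1$. So no solution satisfies (i)--(ii).

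The main obstacle is this equivalence direction together with the regularity at $x=0$. It hinges on identifying $(T,0)$ as an equilibrium exactly when $V'(T)=0$, via the identity $G'(s)+\omega s = s(\omega - V(s)) - \tfrac12 s^2V'(s)$. We will verify this identity carefully first.
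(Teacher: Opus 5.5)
Your proof is correct, but it is not the paper's route. The paper proves nothing about the ODE itself. It quotes \cite[Theorem~5]{BL83a}: with $f(s)=-G'(s)-\omega s$ and $F(s)=-G(s)-\omega s^2/2$, a unique even positive solution vanishing at infinity exists exactly when $\zeta_0=\inf\{s>0\mid F(s)=0\}$ is positive and $f(\zeta_0)>0$. The paper then only translates these conditions. $F(\zeta_0)=0$ means $V(\zeta_0)=\omega$, so $\zeta_0=T(\omega)$, and $f(\zeta_0)$ has the sign of $V'(\zeta_0)$. Your identity gives the exact relation $f(T)=\tfrac12 T^2V'(T)$; the paper writes $f(\zeta_0)=V'(\zeta_0)$, which is harmless for signs. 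Membership in $H^1$ comes from \cite[Remark~6.3]{BL83a}.

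You instead reprove the one-dimensional Berestycki--Lions theorem directly in terms of $V$, using the first integral, the quadrature $x=\int_{R(x)}^{T}\frac{ds}{s\sqrt{\omega-V(s)}}$ and the equilibrium $(T,0)$. The citation is shorter. Your version is self-contained, already produces the relation $R'=-R\sqrt{\omega-V(R)}$ that the paper re-derives in Lemma~\ref{lemma.lambda_analytic}, and shows which hypothesis each direction uses.

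That last point deserves a remark. Existence and uniqueness need only $G\in C^1$ and \eqref{eq.ber-lio-str.1}, since any solution with (i)--(ii) must coincide with your quadrature. The ``only if'' direction is different: you use local Lipschitz continuity of $G'$ (equilibrium argument) or analyticity (divergence argument). You take both from (G0), which is not a hypothesis of the proposition. This is no weaker than the paper, because \cite[Theorem~5]{BL83a} is stated for locally Lipschitz nonlinearities, so the paper relies on the same assumption silently.

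The assumption also cannot be dropped. Take a $C^1$ profile $V$ with the following properties:
\begin{itemize}
\item $V(s)=s$ near $0$;
\item $V<\omega$ on $(0,T)$;
\item $\omega-V(s)=(T-s)\bigl(\log\frac{1}{|T-s|}\bigr)^{-3}$ near $T$.
\end{itemize}
Then $G=-\frac12 s^2V$ is $C^1$ and satisfies \eqref{eq.ber-lio-str.1}. Moreover $T(\omega)=T$, $\omega<\sup V$ and $V'(T)=0$. Yet your quadrature converges at $T$, because $\int_0 t^{-1/2}(\log\frac1t)^{3/2}\,dt<\infty$. It therefore yields a solution with (i)--(ii) that leaves the non-Lipschitz equilibrium $(T,0)$.

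So state the Lipschitz hypothesis explicitly. With it, the equilibrium argument suffices and the analyticity argument is redundant.
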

\begin{proof}
We set \(f(s) := - G'(s) - \omega s\) and \(F(s) := -G(s) - \omega s^2/2\).
From \cite[Theorem~5]{BL83a}, there exists a solution to \(u'' + f(u) = 0\) such that
\(\lim_{|x|\to\infty} u(x) = 0\) if and only if 
\(\zeta_0 := \inf\{s>0\mid F(s) = 0\}\) is positive and \(f(\zeta_0)>0\).
In the proof of \cite[Theorem~5]{BL83a}, this solution is constructed
with the initial value problem
\begin{linenomath}
    \begin{equation}
u''(x) + f(u(x)) = 0,\quad u(0) = \zeta_0,\quad u'(0) = 0.
    \end{equation}
\end{linenomath}
The condition \(F(\zeta_0) = 0\) is equivalent to 
\begin{linenomath}
    \begin{equation}
        -G(\zeta_0) - \frac{\omega\zeta_0^2}{2} = 0
    \end{equation}
\end{linenomath}
which implies \(V(\zeta_0) = \omega\). Since 
\begin{linenomath}
    \begin{equation}
    \label{prop.ber-lio-str.2}
    \begin{split}
        f(\zeta_0) &= -G'(\zeta_0) - \omega \zeta_0 = 
        -G'(\zeta_0) - V(\zeta_0)\zeta_0 \\[0.3em]
        &= -G'(\zeta_0) + \frac{2G(\zeta_0)}{\zeta_0^2}\cdot \zeta_0 
        = \frac{1}{\zeta_0^2}\bigg(-G'(\zeta_0)\zeta_0^2 + 2G(\zeta_0)\zeta_0\bigg) \\
        &= V'(\zeta_0),
        \end{split}
    \end{equation}
    \end{linenomath}
The condition \(f(\zeta_0)>0\) is equivalent to \(V'(\zeta_0)>0\). 

From the first limit in \eqref{eq.ber-lio-str.1}, \(V(0) = 0\). Since \(\omega>0\), there exists \(\zeta>0\) such that \(V(\zeta) = \omega\).
From \eqref{prop.ber-lio-str.2}, and the second limit in \eqref{eq.ber-lio-str.1}, 
\[
V'(s) = -G'(s) - \omega s = s\left(-\frac{G'(s) + \omega s}{s}\right)\sim\omega s.
\]
Since \(\omega>0\), \(V'>0\) on \((0,\varepsilon)\) for \(\varepsilon>0\) small
enough. Therefore, \(T(\omega)>0\). If \(V'(T(\omega))>0\) is positive, then
a unique positive, vanishing at infinity, symmetric solution to \eqref{eq.log} 
exists. We denote it \(R_\omega\).
To prove that \(R_\omega\in H^1\), we rely on \cite[Remark~6.3]{BL83a}:
if \(\lim_{s\to 0} f(s)/s =: -m < 0\), then \(R_\omega\) and \(R_\omega'\)
have exponential decay at infinity. In our case, 
\begin{linenomath}
    \begin{equation}
        \frac{f(s)}{s} = -\frac{G'(s) + \omega s}{s} = 
        -\frac{G'(s)}{s} - \omega.
    \end{equation}
\end{linenomath}
From \eqref{eq.ber-lio-str.1}, and \(\omega > 0\), assumptions of the quoted remark are satisfied.
\begin{example}
When \(V\) is invertible, \(T\) is just the inverse function of \(V\).
This is the case where \(G(s)=-a|s|^p\); in the double power case
\(G(s) = -a|s|^p + b|s|^q\) and \(G(s) = a|s|^p \log|s|^2\), 
\(T\) is the inverse function of \(V\) only in the interval \([0,s_2^*]\), where \(s_2^*\) is the unique local
maximum of \(V\).
\end{example}
\begin{defn}[Mass function]
\label{defn.mass-function}
We denote \(\Omega\) the set of \(\omega\in(0,\sup(V))\)
such that \(V'(T(\omega))>0\). We also denote
\begin{linenomath}
    \begin{equation}
        \lambda\colon\Omega\to (0,+\infty),\quad
        \lambda(\omega) := M(R_\omega).
    \end{equation}
\end{linenomath}
\end{defn}
\begin{remark}
\label{rem.domain-log}
If \(G\) is as in \eqref{eq.log} with \(a>0\), then the domain
of \(\lambda\) is given by \((0,\sup(V))\). Therefore 
\(\lambda\colon (0,\sup(V))\to (0,+\infty)\) is a well-defined
function. We set \(\sup(V) := \omega_2^*\).
\end{remark}
\end{proof}
\begin{proposition}
If \(2 < p \leq 6\), then 
\(G\) satisfies the Euler differential inequality
\begin{linenomath}
    \begin{equation}
    \label{eq.Euler}
        L(s) := 12G(s) - 7sG'(s) + s^2 G''(s)\geq 0
    \end{equation}
\end{linenomath}
for every \(s\in (0,s_2^*)\), where
\(s_2^* := \argmax(V)\).
\end{proposition}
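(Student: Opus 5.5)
The plan is to compute \(L\) explicitly for \(G(s)=a s^p\log s^2\) with \(a>0\), which is the standing assumption of this section, and reduce \eqref{eq.Euler} to an elementary inequality in the variable \(\ell:=\log s^2\). The point is that \(G\), \(sG'\) and \(s^2G''\) are all of the form \(a s^p\) times an affine function of \(\ell\). Hence \(L(s)=a s^p\,\Phi(\ell)\) with \(\Phi\) affine, and it only remains to locate the range of \(\ell\) that corresponds to \(s\in(0,s_2^*)\).

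\textbf{Step 1 (computing \(L\)).} For \(s>0\) we have \(sG'(s)=a s^p(p\ell+2)\). By \eqref{eq.second-derivative}, \(s^2G''(s)=a s^p\bigl(p(p-1)\ell+4p-2\bigr)\). Collecting terms gives
\begin{equation}
L(s)=a s^p\Bigl[(12-7p+p^2-p)\ell-14+4p-2\Bigr]=a s^p\Bigl[(p-2)(p-6)\ell+4(p-4)\Bigr].
\end{equation}

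\textbf{Step 2 (identifying \(s_2^*\)).} We have \(V(s)=-2a s^{p-2}\ell\), so
\begin{equation}
V'(s)=-2a s^{p-3}\bigl((p-2)\ell+2\bigr).
\end{equation}
Thus \(V'>0\) exactly when \(\ell<-2/(p-2)\). Consequently \(V\) increases up to \(s_2^*=e^{-1/(p-2)}\) and decreases afterwards, which is consistent with the Example above. It follows that \(s\in(0,s_2^*)\) if and only if \(\ell<-2/(p-2)\); in particular \(\ell<0\) there.

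\textbf{Step 3 (sign of the bracket).} If \(p=6\), the bracket equals \(8>0\). If \(2<p<6\), the coefficient \((p-2)(p-6)\) is negative. Multiplying \(\ell<-2/(p-2)\) by it reverses the inequality, so \((p-2)(p-6)\ell>-2(p-6)=12-2p\). Hence the bracket exceeds \(12-2p+4p-16=2(p-2)>0\). Since \(a>0\) and \(s^p>0\), we conclude \(L(s)>0\) on \((0,s_2^*)\), which is in fact a strict version of \eqref{eq.Euler}.

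The only delicate point is bookkeeping rather than analysis. One has to check the coefficients of \(sG'\) and \(s^2G''\) carefully, since a slip there changes the factorisation \((p-2)(p-6)\). One also has to use the correct direction of the inequality characterising \(s<s_2^*\). The sign of \(a\) is essential: for \(a<0\) the same computation yields \(L<0\), in agreement with the remark after Theorem~\ref{thm.uniqueness-focusing} that the Euler inequality fails in the defocusing case.
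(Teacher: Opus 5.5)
Your proof is correct and follows the same route as the paper. You compute \(L(s)=a s^p\bigl[(p-2)(p-6)\log s^2+4(p-4)\bigr]\), identify \(s_2^*=e^{-1/(p-2)}\) from the sign of \(V'\), and check the sign of the affine bracket on \((0,s_2^*)\), with \(L=8as^6\) when \(p=6\). The only cosmetic difference is that the paper locates the unique zero \(s^*\) of \(L\) and verifies \(\log s^*>\log s_2^*\), whereas you evaluate the bracket at the endpoint \(\log s^2=-2/(p-2)\), obtaining \(2(p-2)>0\), and use monotonicity; the two checks are equivalent.
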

\begin{proof}
If \(a>0\), then \(V(s) = -2a|s|^{p-2}\log |s|^2\). The function
is bounded above and the supremum is achieved by the
unique positive zero of \(V'(s) = -4as^{p-3}((p-2)\log s + 1)\),
namely \(s_2^* = e^{-1/(p-2)}\). Since \(\lim_{s\to 0+} V(s) = 0\)
and \(\lim_{s\to_\infty} V(s) = -\infty\), \(s_2^*\) is a global
maximum. For every \(s>0\),
\begin{linenomath}
    \begin{equation}
    \label{eq.Euler.2}
    \begin{split}
        G'(s) &= as^{p-1} (p\log|s|^2 + 2)\\[0.4em]
        G''(s) &= as^{p-2} (p(p-1)\log s^2 + 4p - 2)\\
    \end{split}
    \end{equation}
\end{linenomath}
Substituting \(G,G'\) and \(G''\) into the differential inequality, 
we obtain
\begin{linenomath}
    \begin{equation}
    \begin{split}
a^{-1} L(s) &= 12s^p\log s^2 - 7s^{p} (p\log s ^2 + 2)
+ s^{p} (p(p-1)\log s^2 + 4p - 2) \\
&=s^p\log s^2 (12 - 7p + p(p-1)) + 4s^p (p-4)\\
&= s^p (2(p-2)(p-6) \log s + 4(p-4)).
\end{split}
    \end{equation}
\end{linenomath}
We set \(A = 2(p-2)(p-6)\) and \(B = 4(p-4)\). Since $2<p<6$, we have $A<0$. Therefore, \(L > 0\) in a neighbourhood of zero which does not contain zero, and vanishes exactly once on \((0,+\infty)\) at the point
\(s^*\) such that
\begin{linenomath}
    \begin{equation}
        \log s^* = -\frac{2(p-4)}{(p-2)(p-6)}.
    \end{equation}
\end{linenomath}
We conclude the proof by showing that \(\log s^* > \log s_2^*\) or,
equivalently
\begin{linenomath}
    \begin{equation}
    \begin{split}
-\frac{2(p-4)}{(p-2)(p-6)} + \frac{1}{p-2} &= 
\frac{1}{p-2}\left(-\frac{2(p-4)}{(p-6)} + 1\right)\\
 &= \frac{1}{p-2}\cdot \frac{-2p + 8 + p-6}{p-6}\\
&=\frac{1}{p-2}\cdot \frac{2 - p}{p-6} = \frac1{6-p}>0.
\end{split}
    \end{equation}
\end{linenomath}
Finally, if \(p=6\), then \(L(s) = 8as^p\geq 0\).
\end{proof}
\begin{proof}[Proof of Theorem~\ref{thm.uniqueness-focusing}]
Let \(R_1\) and \(R_2\) be two minima of \(E\) on \(S(\lambda)\)
such that both \(R_1,R_2\) are positive and symmetric-decreasing.
There are Lagrange multipliers \(\mu_1,\mu_2\) such that 
\(E'(R_i) = \mu_i M'(R_i)\) for \(i=1,2\), or equivalently,
\(R_i\) is a weak solution to \(R_i'' - G'(R_i) + 2\mu_i R_i = 0\)
for \(i=1,2\). We set \(\omega_i := -2\mu_i\). From the
uniqueness result \cite[Theorem~5]{BL83a}, we have \(R_i = R_{\omega_i}\).
Without loss of generality, we can suppose that \(\omega_1\leq \omega_2\).
From Remark~\ref{rem.domain-log}, the domain of \(\lambda\)
is an interval. Since \(G\) satisfies the differential inequality \eqref{eq.Euler},
we apply \cite[Lemma~3.1]{GG17} to claim that \(\lambda\colon (0,\omega_2^*)\to (0,+\infty)\) is a monotonically non-decreasing function. Therefore, from \(\omega_1 < \omega_2\) it would
follow \(\lambda(\omega)\equiv\lambda\) for every \(\omega\in (\omega_1,\omega_2)\). Therefore, from \cite[Lemma~3.1]{GG17},
it follows \(L(s) = 0\) for every \(s\in (0,R_1(0))\). Therefore,
there are \(c_1,c_2\in\mathbb{R}\) such that
\(G(s) = c_1 s^2 + c_2 s^6\) for every \(s\in (0,R_1(0))\),
contradicting the definition of \(G\) given in \eqref{eq.log}.
If \(\omega_1 = \omega_2\), then we apply once again 
\cite[Theorem~5]{BL83a} to obtain \(R_1 = R_2\).
\end{proof}
\section{Stability of the non-linear Schr\"odinger equation with analytic non-linearity}
If \(a<0\) and \(2 < p < 6\), \cite[Lemma~3.1]{GG17} cannot
be applied as the sign change of \(a\) implies a sign change
in \eqref{eq.Euler}, meaning that we cannot rely on the monotonicity
property of \(\lambda\). Using analytic properties of
the mass function, however, it is possible
to show that \(\mathcal{G}_\lambda^{r,+}\) is a finite set,
even if not necessarily a singleton. The finiteness, however,
turns out to be enough to prove the orbital stability of 
standing-waves. 

In the next subsection we only assume that 
\(G\) is analytic on \((0,+\infty)\) 
and that the limits in \eqref{eq.ber-lio-str.1} are satisfied. Under
these assumptions alon domain of the mass function
is not necessarily an interval and its complement on
\((0,\sup(V))\) can be characterised as discontinuity points
of the function \(T\) defined in Proposition~\ref{prop.ber-lio-str}.
\subsection{Continuity of \(T\) and critical frequencies}
\label{ssect.frequencies}
\begin{proposition}
\label{prop.T-continuity}
Suppose that \(G\) is continuous and the first limit
in \eqref{eq.ber-lio-str.1} is satisfied.
Then \(T\colon (0,\sup(V))\to (0,+\infty)\) is a non-decreasing
piece-wise, continuous function with discontinuities occurring at \(\omega\) such that
\begin{enumerate}[(i).]
\itemsep=0.3em
    \item \(T(\omega)\) is a local maximum of \(V\) and
    \item there exists \(s>T(\omega)\) such that \(V(s)>\omega\).
\end{enumerate}
\end{proposition}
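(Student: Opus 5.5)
The plan is to work directly from the definition \(T(\omega)=\inf\{s>0\mid V(s)=\omega\}\), using only the continuity of \(V\) on \([0,+\infty)\) with \(V(0)=0\). The first limit in \eqref{eq.ber-lio-str.1} gives this boundary value.

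\emph{Monotonicity.} Since \(V\) is continuous and \(V(0)=0<\omega\), the intermediate value theorem gives \(V(s)<\omega\) on \([0,T(\omega))\). The infimum is attained, so \(V(T(\omega))=\omega\). Equivalently,
\[
T(\omega)=\inf\{s>0\mid V(s)\geq\omega\}.
\]
Now take \(\omega_1<\omega_2<\sup(V)\). The set \(\{V\geq\omega_2\}\) is contained in \(\{V\geq\omega_1\}\), so \(T(\omega_1)\leq T(\omega_2)\). In fact \(T\) is strictly increasing, because \(V(T(\omega_1))=\omega_1\neq\omega_2\).

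\emph{Left continuity.} Let \(\omega_n\uparrow\omega\). Then \(T(\omega_n)\) increases to some limit \(\ell\leq T(\omega)\). By continuity of \(V\) we get \(V(\ell)=\lim_n\omega_n=\omega\), and therefore \(\ell\geq T(\omega)\). Hence \(T\) is left continuous everywhere, and any discontinuity is a jump from the right.

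\emph{Right discontinuities.} Let \(\omega_n\downarrow\omega\) and set \(\ell:=\lim_n T(\omega_n)\geq T(\omega)\). Again \(V(\ell)=\omega\). Suppose \(\ell>T(\omega)\). For \(s\in(T(\omega),\ell)\) and every \(n\) we have \(s<T(\omega_n)\), so \(V(s)<\omega_n\); letting \(n\to\infty\) gives \(V(s)\leq\omega\) on \([T(\omega),\ell]\). We also know \(V<\omega\) on \([0,T(\omega))\). Together with \(V(T(\omega))=\omega\), this shows \(T(\omega)\) is a local maximum of \(V\), which is (i).

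Since \(\omega<\sup(V)\), there is \(s\) with \(V(s)>\omega\). Such an \(s\) cannot lie in \([0,\ell]\), so \(s>\ell>T(\omega)\), which is (ii).

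\emph{Converse, if needed.} I would also check that (i) and (ii) together force a jump. Let \(T(\omega)\) be a local maximum with \(V\leq\omega\) on \([T(\omega),T(\omega)+\delta]\). For \(\omega'>\omega\), the point \(T(\omega')\) lies beyond \(T(\omega)+\delta\), because \(V<\omega'\) on \([0,T(\omega)+\delta]\). Condition (ii) guarantees that \(T(\omega')\) is finite for \(\omega'\) close to \(\omega\). Therefore \(\liminf_{\omega'\downarrow\omega}T(\omega')\geq T(\omega)+\delta\), and \(T\) jumps at \(\omega\).

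\emph{Piecewise continuity.} Discontinuities of a monotone function are at most countable, so \(T\) is piecewise continuous in that sense. If genuine finiteness on compact subintervals is needed, it would have to come from analyticity, which limits the local maxima of \(V\); the proposition assumes only continuity, so I would read ``piece-wise'' via monotonicity.

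The main obstacle I expect is the right-discontinuity step. There one must pass from \(V(s)<\omega_n\) to the non-strict bound \(V\leq\omega\) on the whole gap \([T(\omega),\ell]\). This requires that the left side \(V<\omega\) on \([0,T(\omega))\) is established first, so that \(T(\omega)\) is a genuine local maximum of \(V\) relative to a two-sided neighbourhood.
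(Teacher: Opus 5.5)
Your proof is correct and follows essentially the paper's route. Monotonicity comes from the minimality of \(T\) via the intermediate value theorem, and then you prove the two implications between (i)--(ii) and a jump of \(T\); you argue the forward implication directly, where the paper argues by contradiction. Your explicit left-continuity step is a genuine addition: the paper starts from a sequence \(\omega_n\to\omega\) with \(\omega_n>\omega\) and \(T(\omega_n)\geq T(\omega)+\delta\), tacitly assuming every discontinuity is a jump from the right. Your remark that mere continuity of \(G\) yields only countably many jumps, not local finiteness, is also accurate.
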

\begin{proof}
Firstly, we prove that \(T\) is non-decreasing. 
Given \(\omega_1\leq\omega_2\), we show that 
\(T(\omega_1)\leq T(\omega_2)\). Suppose that
\(T(\omega_1)>T(\omega_2)\) and set \(s_i := T(\omega_i)\) for \(i=1,2\).
Since \(V\) is continuous, \(V(0) = 0\) and \(V(s_1)\leq V(s_2)\), 
there exists \(s_3\in (0,s_2)\) such that \(V(s_3) = V(s_1)\). Since
\(s_3 < s_1\), \(V(s_3) = T(\omega_1)\) contradicts the minimality
of \(T(\omega_1)\) in the definition given in (ii) of 
Proposition~\ref{prop.ber-lio-str}.

(i) and (ii) imply that \(T\) is not continuous at \(\omega\).
Suppose that \(T(\omega)\) is a local maximum. Then, there exists \(\delta >0\) such
that \(V(s)\leq V(T(\omega))\) for every \(s\in (T(\omega) - \delta,T(\omega) + \delta)\).
From (ii), there exists \(s' >T(\omega)\) such that \(V(s')>V(T(\omega)) = \omega\). Then 
\(s'\geq T(\omega) + \delta\). We claim that 
for every \(\omega'\in (\omega,V(s'))\), there holds
\(T(\omega')\geq T(\omega) + \delta\). On the contrary,
we have \(T(\omega)\leq T(\omega') < T(\omega) + \delta\),
because \(T\) is non-decreasing. Since \(T(\omega)\)
is a global maximum in \((T(\omega) - \delta,T(\omega) + \delta)\), we have 
\(V(T(\omega'))\leq\omega\). That is \(\omega'\leq\omega\).

\paragraph{\(T\) is not continuous at \(\omega\) imply (i) and (ii)}
Let \((\omega_n)_{n\geq 1}\) be a sequence such that \(\omega_n > \omega\) and \(\delta>0\) such that \(\omega_n\to\omega\)
and \(T(\omega_n)\geq T(\omega) + \delta\). 

(i). If \(T(\omega)\) is not a local maximum,
there exists \(s\in (T(\omega) - \delta,T(\omega) + \delta)\) such that \(\omega < V(s)\);
there exists \(n_0\) such that \(V(s)>\omega_{n}>\omega\) for every \(n\geq n_0\).
By the Mean Value Theorem, there exists \(s_{n_0}\in (T(\omega),s)\) such that \(V(s_{n_0}) = \omega_{n_0}\).
By definition of \(T\), we have 
\begin{linenomath}
    \begin{equation}
    \delta + T(\omega)\leq T(\omega_{n_0})\leq s_{n_0} < s < \delta + T(\omega).
    \end{equation}
\end{linenomath}
If \(s<T(\omega)\), then a contradiction can be obtained from
\begin{linenomath}
    \begin{equation}
    \delta + T(\omega)\leq T(\omega_{n_0})\leq s_{n_0} < T(\omega).
    \end{equation}
\end{linenomath}
(ii) follows directly from the assumption that some \(\omega_n>\omega\). The inequality
\(T(\omega_n) < T(\omega)\) contradicts the monotonicity of \(T\), while \(T(\omega_n) = T(\omega)\)
implies \(\omega_n = \omega\). Therefore, we set \(s := T(\omega_n)\) and obtain (ii).
We refer to Figure~\ref{fig.T-not-continuous} for an example.
\end{proof}
\begin{figure}[h!]
\label{fig.T-continuous}
\centering
\includegraphics[width=0.7\textwidth]{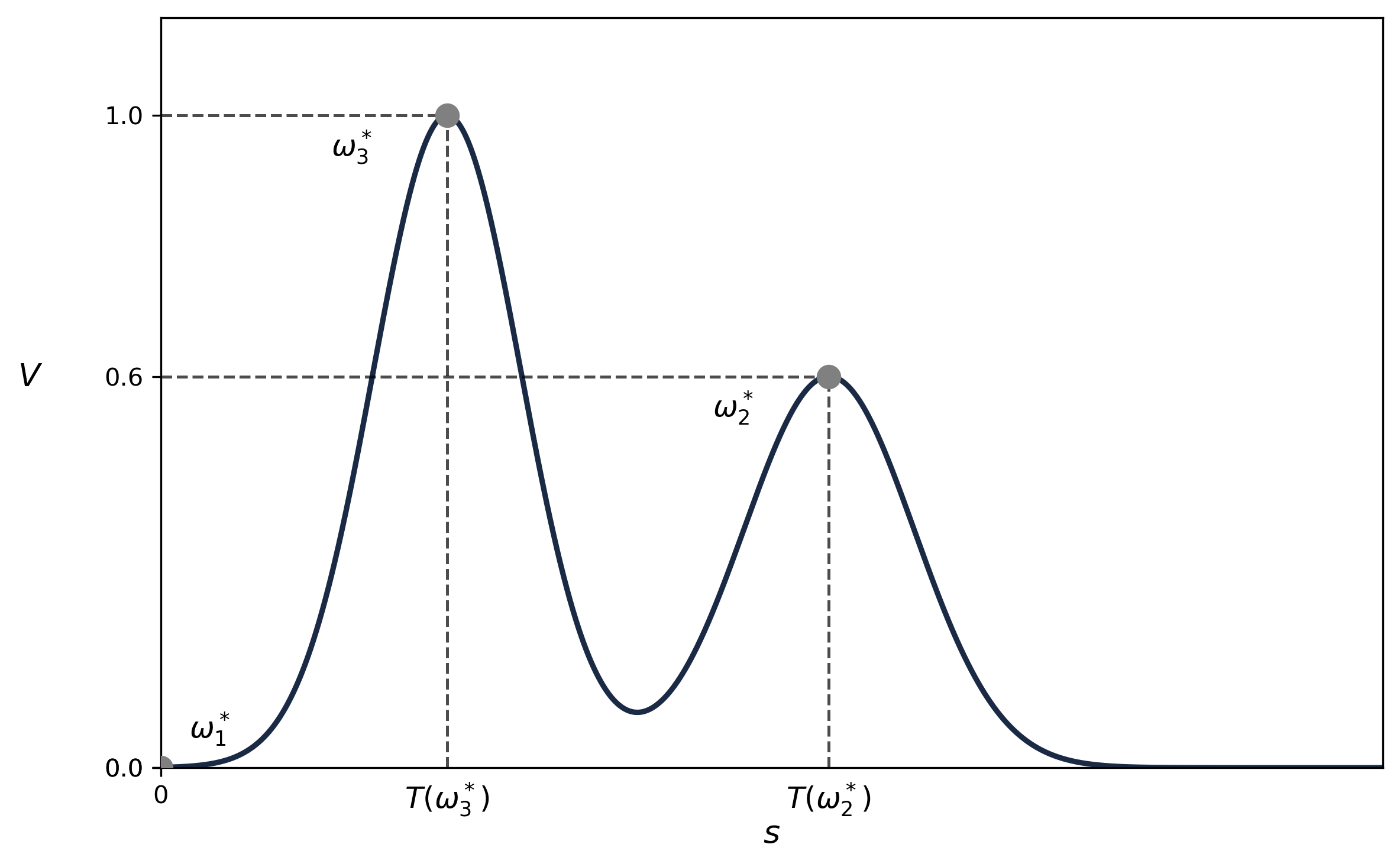}
\caption{\(V\) has two local maxima and \(T\) is continuous on
\((0,\sup(V))\).}
\label{fig:V_two_maxima_1}
\end{figure}
\begin{figure}[h!]
\centering
\includegraphics[width=0.7\textwidth]{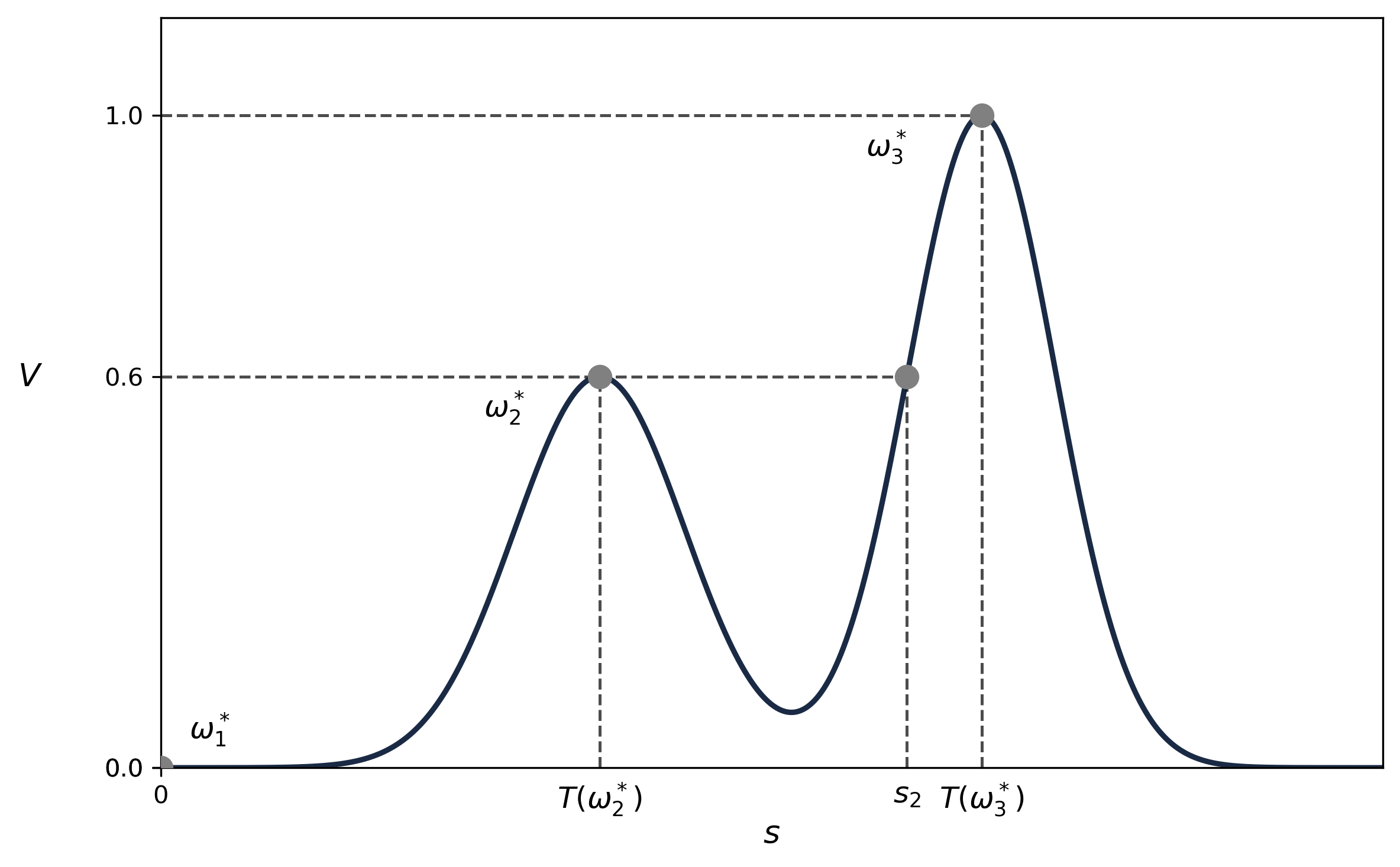}
\caption{\(V\) has two local maxima and \(T\) has a discontinuity
at \(\omega = \omega_2^*\).}
\label{fig.T-not-continuous}
\end{figure}
\subsection{Analytic properties of the mass function}
Throughout this section, we will assume that \(G\) and consequently \(V\).
are analytic on \((0,+\infty)\). Therefore, \(K_V\), the set of critical values of \(V\)
is isolated on \((0,+\infty)\) with \(0\) the only possible accumulation point. From 
Proposition~\ref{prop.T-continuity}, the set of discontinuity points of \(T\)
is a subset of \(K_V\) and it is a countable subset of \([0,\sup(V))\).
We use the representation
\begin{linenomath}
    \begin{equation}
    (\omega_k^*)_{k\in\mathbb{Z}},\quad \omega_k^* < \omega_{k+1}^*.
    \end{equation}
\end{linenomath}
In most of the applications, pure-power combinations or log-modified pure-powers,
\(0\) is not an accumulation point of \(K_V\). In this case, we prefer the representation
\begin{linenomath}
    \begin{equation}
      (\omega_k^*)_{k\geq 1},\quad \omega_k^* < \omega_{k+1}^*
    \end{equation}
\end{linenomath}
and denote \(\omega_1^* = 0\). We also denote
\begin{linenomath}
    \begin{equation}
        t_k := T(\omega_k^*),\quad s_k := \sup\big\{s|V(s) = \omega_k^*\big\}\cap (0,T(\omega_{k+1}^*)).
    \end{equation}
\end{linenomath}
Clearly \(t_k<s_k<t_{k+1}\) (check Figure~\ref{fig.T-not-continuous}).
\begin{proposition}
\label{prop.local-integrability}
For every \(k\geq 1\) the function \(T\colon (\omega_k^*,\omega_{k+1}^*)\to (s_k,t_{k+1})\) is 
the inverse of \(V\) and \((T^2)'\) is in \(L^1(\omega_k^*,\omega_{k+1}^*)\).
\end{proposition}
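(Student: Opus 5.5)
On each interval \((\omega_k^*,\omega_{k+1}^*)\) the function \(T\) is continuous, non-decreasing and contains no critical value of \(V\) in its interior. The plan is therefore to identify \(T\) there with a local inverse of \(V\) on an interval where \(V\) is strictly increasing, and then to read off integrability of \((T^2)'\) from monotonicity of \(T\).

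\textbf{Step 1: \(T\) is the inverse of \(V\).} By Proposition~\ref{prop.T-continuity}, the discontinuity points of \(T\) lie in \(K_V\), so \(T\) is continuous on \((\omega_k^*,\omega_{k+1}^*)\).

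\begin{itemize}
\item By definition \(V(T(\omega))=\omega\), so \(T\) is injective.
\item Being non-decreasing, \(T\) is strictly increasing, and by continuity it maps onto an interval \((\alpha,\beta)\).
\item The endpoints are the one-sided limits \(\alpha=T(\omega_k^{*}+)\) and \(\beta=T(\omega_{k+1}^{*}-)\).
\end{itemize}

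The right-hand limit is \(\alpha=s_k\): indeed \(V(\alpha)=\omega_k^*\), and \(\alpha\) is the last preimage of \(\omega_k^*\) before the branch. This uses the definition of \(s_k\) together with the minimality in the definition of \(T\). By the same argument, \(\beta=t_{k+1}\).

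On \((s_k,t_{k+1})\), \(V\) takes values in \((\omega_k^*,\omega_{k+1}^*)\), since otherwise minimality of \(T\) would fail. For \(s\) in that interval, set \(\omega=V(s)\). Then \(T(\omega)\le s\), and \(T(\omega)\) is also a preimage lying in \((s_k,t_{k+1})\).

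\textbf{Step 2: \(V\) is strictly increasing on \((s_k,t_{k+1})\).} Suppose \(V\) were not injective there. Then \(V\) would have an interior local extremum, hence a critical value in \((\omega_k^*,\omega_{k+1}^*)\). This is excluded, because the \(\omega_k^*\) enumerate the isolated critical values, which are isolated by analyticity. Hence \(V\) is strictly monotone on this interval, and increasing because \(T\) is. It follows that \(T=V^{-1}\) there, and \(V'>0\) off a discrete set, so \(T\) is analytic wherever \(V'\neq 0\).

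I expect the main obstacle to be the bookkeeping between "critical values" and "discontinuities". The points \(\omega_k^*\) as introduced are discontinuities of \(T\), which form a subset of \(K_V\). If the enumeration contains only discontinuities, a critical value of \(V\) (an inflection with \(V'=0\)) could lie inside the interval. This does not break monotonicity: \(V\) remains strictly increasing, and \(T\) stays continuous but loses differentiability at isolated points. So I will phrase Step 2 using only the facts that \(T\) is continuous and strictly increasing and that \(V\circ T=\mathrm{id}\), rather than relying on the absence of critical points.

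\textbf{Step 3: integrability of \((T^2)'\).} The function \(T^2\) is monotone and continuous on the interval. It is differentiable except at the discrete set where \(V'(T)=0\), and there \((T^2)'=2T/V'(T)\ge 0\). For a monotone function, Lebesgue's theorem gives
\[
\int_{\omega_k^*}^{\omega_{k+1}^*}(T^2)'\,d\omega\le T(\omega_{k+1}^{*}-)^2-T(\omega_k^{*}+)^2=t_{k+1}^2-s_k^2<\infty .
\]
Since \((T^2)'\ge 0\), this bound is exactly \(L^1\) integrability. Near \(\omega_{k+1}^*\), \(V'(T)\to 0\), so \((T^2)'\) may blow up; the monotonicity bound above controls this blow-up without further analysis. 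The case \(k\) with \(\omega_{k+1}^*=\sup V\) is handled the same way, provided \(t_{k+1}\) is finite, which holds since \(T\) is bounded by the point where \(V\) attains its supremum.
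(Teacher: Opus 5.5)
Your proof is correct, but it obtains the inverse-function statement from the side of \(T\), whereas the paper works from the side of \(V\).

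The paper assumes an interior local maximum \(s_M\in(s_k,t_{k+1})\) of \(V\) and excludes it by cases:
\begin{enumerate}[(a).]
\item \(V(s_M)\geq\omega_{k+1}^*\) contradicts the minimality of \(t_{k+1}=T(\omega_{k+1}^*)\);
\item \(V(s_M)\leq\omega_k^*\) contradicts the maximality of \(s_k\);
\item an intermediate value would be a discontinuity of \(T\) by Proposition~\ref{prop.T-continuity}.
\end{enumerate}
Strict monotonicity of \(V\) then yields \(T\circ V=\mathrm{id}\).

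You use the same proposition only to get continuity of \(T\) on the open interval. Then \(V\circ T=\mathrm{id}\) and monotonicity make \(T\) a homeomorphism onto \((T(\omega_k^*+),T(\omega_{k+1}^*-))\), on which \(V=T^{-1}\) is automatically strictly increasing. This avoids the case analysis on extrema. It also makes your first version of Step~2 unnecessary; that version relies on reading \((\omega_k^*)\) as all of \(K_V\). The cost is that the endpoints of the image must be identified explicitly.

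Your argument for \(\beta=t_{k+1}\) is fine. For \(\alpha=s_k\), however, the relevant fact is not the minimality of \(T\):
\begin{itemize}
\item since \(V=T^{-1}>\omega_k^*\) on \((\alpha,t_{k+1})\), that interval contains no preimage of \(\omega_k^*\), so \(s_k\leq\alpha\);
\item \(V(\alpha)=\omega_k^*\) together with \(\alpha<t_{k+1}\) gives \(\alpha\leq s_k\).
\end{itemize}

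For integrability, the two arguments coincide in substance: both use that \(T^2\) is bounded and non-decreasing with \((T^2)'\geq 0\). The paper integrates over \([\omega_k^*+\eta,\omega_{k+1}^*-\eta]\) and lets \(\eta\to 0\). Your use of Lebesgue's inequality \(\int_a^b f'\leq f(b)-f(a)\) for monotone \(f\) gives the bound directly. It is also insensitive to isolated interior points where \(V'(T)=0\).
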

\begin{proof}
From (ii) of Proposition~\ref{prop.ber-lio-str}, 
\(V(T(\omega)) = \omega\). By definition of \(s_k,t_{k+1}\), \(V(s_k) = \omega_k^*\)
and \(V(t_{k+1}) = \omega_{k+1}^*\). The function \(V\) is strictly increasing
on the interval \((s_k,t_{k+1})\)
On the contrary, let 
\(s_k<s_M<t_{k+1}\) be a local maximum of \(V\). If \(V(s_M)\geq\omega_{k+1}^*\), since \(V(0)=0\),
from the Mean Value Theorem we obtain a contradiction with the minimality of \(T(\omega_{k+1}^*)\).
Therefore, \(V(s_M)<\omega_{k+1}^*\). If \(V(s_M)\leq\omega_{k}^*\),
by the Mean Value Theorem on the interval \([s_M,t_{k+1}]\), there exists
\(s>s_M\) such that \(V(s) = \omega_k^*\), contradicting the maximality of \(s_k\).
Therefore, \(V(s_M)\) satisfies (i) and 
(ii) in Proposition~\ref{prop.T-continuity} and is a discontinuity point of \(T\),
contradicting the definition of \(\omega_k^*\). Therefore, 
\(V\colon (s_k,t_{k+1})\to (\omega_k^*,\omega_{k+1}^*)\) is injective and surjective and the composition \(T\circ V = id\) also holds. To prove that \((T^2)'\) is \(L^1\), we consider \(\eta>0\) such that
\(\omega_k^* + \eta < \omega_{k+1}^* - \eta\) and evaluate
\begin{linenomath}
\begin{equation}
\begin{split}
\int_{\omega_{k}^* + \eta}^{\omega_{k+1}^* - \eta} (T^2)'(y)dy = 
T(\omega_{k}^* + \eta)^2 - T(\omega_{k+1}^* - \eta)^2.
\end{split}
\end{equation}
\end{linenomath}
Since \(T\) is an increasing function, the term above is the sum of two
increasing functions of \(\eta\). Since \(T\) is bounded on \((\omega_k^*,\omega_{k+1}^*)\), the limit
exists as \(\eta\to 0\). Since \((T^2)'\geq 0\) on \((\omega_k^*,\omega_{k+1}^*)\), 
we proved that \((T^2)'\in L^1(\omega_k^*,\omega_{k+1}^*)\).
\end{proof}

\begin{lemma}
\label{lemma.lambda_analytic}
Suppose that \(G\in C^\omega ((0,+\infty),\mathbb{R})\), and
\eqref{eq.ber-lio-str.1} are satisfied. Then the mass function
\begin{linenomath}
    \begin{equation}
        \lambda\colon\Omega\to (0,+\infty),\quad
        \lambda(\omega) := \|R_\omega\|_{L^2}^2
    \end{equation}
\end{linenomath}
is analytic. Moreover, if
\begin{linenomath}
    \begin{equation}
    \label{T}
\omega^{\frac12}(T^2)'(\omega) = o(1)
    \end{equation}
\end{linenomath}
as \(\omega\to 0\), then \(\lambda=o(1)\) as \(\omega\to 0\).
\end{lemma}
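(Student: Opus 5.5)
The plan is to write \(\lambda(\omega)\) as an explicit one-dimensional integral through the first integral of the ODE, and then read off analyticity and the asymptotics from that formula. Multiplying \eqref{eq.log} by \(R_\omega'\) and integrating from \(x\) to \(+\infty\) gives, for the positive symmetric-decreasing solution, \((R_\omega')^2 = \omega R_\omega^2 + 2G(R_\omega) = R_\omega^2(\omega - V(R_\omega))\). On \((0,+\infty)\) the solution is strictly decreasing, so we may change variables \(s = R_\omega(x)\). This yields
\begin{equation}
\lambda(\omega) = 2\int_0^{T(\omega)} \frac{s\,ds}{\sqrt{\omega - V(s)}} .
\end{equation}
By minimality of \(T(\omega)\) in Proposition~\ref{prop.ber-lio-str}, the denominator is positive on \((0,T(\omega))\), and it behaves like \(\sqrt{\omega}\) near \(0\) because \(V(0)=0\).

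For analyticity, fix \(\omega_0\in\Omega\). Since \(V'(T(\omega_0))>0\) and \(V\) is analytic, the analytic inverse function theorem shows that \(T\) coincides near \(\omega_0\) with an analytic local inverse of \(V\). This is consistent with Proposition~\ref{prop.local-integrability}, since \(\omega_0\) is not a discontinuity point of \(T\). To remove the moving endpoint and the square-root singularity, substitute \(s = T(\omega)\,\tau\) and then \(\tau\) in terms of \(\theta\in[0,1]\), so that \(\omega - V(T(\omega)\tau)\) factors as \((1-\tau)\,h(\omega,\tau)\). Here \(h\) is analytic, positive, with \(h(\omega,1) = T(\omega)V'(T(\omega))>0\), and \(h>0\) on \([0,1)\) by the minimality of \(T\). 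Setting \(\tau = 1-\theta^2\) turns \(ds/\sqrt{\omega-V}\) into a regular analytic integrand in \((\omega,\theta)\). Near \(\tau=0\) there is a possible issue, since \(V\) is only analytic on \((0,\infty)\). I would handle it by splitting the integral at a fixed \(s_1\in(0,T(\omega_0))\). On \([s_1,T(\omega)]\) the argument above applies. On \([0,s_1]\) the integrand \(s(\omega - V(s))^{-1/2}\) is analytic in \(\omega\) uniformly for \(s\in(0,s_1]\), because \(\omega - V(s)\geq c>0\) there, so the integral extends holomorphically to complex \(\omega\) near \(\omega_0\) by differentiation under the integral (Morera). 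Analyticity of \(\lambda\) then follows from holomorphic dependence on parameters.

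For the asymptotics as \(\omega\to 0\), I would rewrite the formula using \(u = V(s)\) on the branch where \(T\) inverts \(V\). For small \(\omega\) the interval \((0,T(\omega))\) corresponds to \(u\in(0,\omega)\) with \(s = T(u)\), and \(s\,ds = \tfrac12 (T^2)'(u)\,du\). The integral becomes
\begin{equation}
\lambda(\omega) = \int_0^{\omega} \frac{(T^2)'(u)}{\sqrt{\omega-u}}\,du ,
\end{equation}
which is justified by Proposition~\ref{prop.local-integrability}, applied on the first interval \((\omega_1^*,\omega_2^*)=(0,\omega_2^*)\) where \((T^2)'\in L^1\). Now use \eqref{T}: given \(\varepsilon>0\), for \(u\) small we have \((T^2)'(u)\le \varepsilon u^{-1/2}\). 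Then
\begin{equation}
\lambda(\omega)\le \varepsilon\int_0^\omega \frac{du}{\sqrt{u}\sqrt{\omega-u}} = \varepsilon\pi ,
\end{equation}
since the Beta-type integral is independent of \(\omega\). Hence \(\lambda(\omega)=o(1)\).

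I expect the main obstacle to be the analyticity near the turning point \(T(\omega)\), where the integrand has an inverse square-root singularity and the endpoint moves with \(\omega\). The desingularising substitution, together with checking that \(h\) stays positive and analytic jointly in a complex neighbourhood, is the delicate part. A secondary issue is that \(V\) need not be analytic at \(0\), which is handled by the splitting described above.
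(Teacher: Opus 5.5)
Your argument is correct, with one caveat below that applies equally to the paper's proof. It follows a different route from the paper in both parts.

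\textbf{Analyticity.} The paper splits at $s_n$ and substitutes $y=V(s)$ on the monotone branch. This gives the Abel-type integral $\int_{\omega_n^*}^{\omega}(T^2)'(y)(\omega-y)^{-1/2}\,dy$. Away from the singularity it expands $(\omega-y)^{-1/2}$ binomially about $\omega_0$, using the $L^1$ bound on $(T^2)'$. Near $y=\omega$ it inserts the power series of $(T^2)'$ at $\omega_0$ and integrates term by term.

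You stay in the $s$-variable and split at an arbitrary point below $T(\omega_0)$. You then remove both the moving endpoint and the inverse square root by $s=T(\omega)(1-\theta^2)$ and the factorisation $\omega-V(T(\omega)\tau)=(1-\tau)h(\omega,\tau)$, with $h(\omega,\tau)=T(\omega)\int_0^1 V'\big(T(\omega)(\tau+\sigma(1-\tau))\big)\,d\sigma$, and conclude by holomorphic dependence on parameters. This is more local and self-contained. It only needs the analytic inverse function theorem at $T(\omega_0)$ and the strict inequality $V<\omega_0$ on $[0,T(\omega_0))$. It does not use the decomposition of $(0,\sup V)$ into the intervals $(\omega_k^*,\omega_{k+1}^*)$, nor the $L^1$ bound on $(T^2)'$. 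The paper's route has the advantage that its Abel representation is the one reused for the limit $\omega\to0$.

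\textbf{Limit $\omega\to0$.} Your Beta-integral bound $\lambda(\omega)\le\pi\sup_{0<u<\omega}u^{1/2}(T^2)'(u)$ is more robust than the paper's bound $2\omega^{1/2}\|(T^2)'\|_{L^\infty(0,\omega)}$. The paper's step $\|(T^2)'\|_{L^\infty(0,\omega)}\sim(T^2)'(\omega)$ fails when $(T^2)'$ blows up at $0$. For example, for $G(s)=-|s|^5$ one has $(T^2)'(\omega)=c\,\omega^{-1/3}$, so the paper's bound is infinite, whereas yours gives $\lambda(\omega)=O(\omega^{1/6})$.

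\textbf{The caveat.} The identity $\lambda(\omega)=\int_0^\omega (T^2)'(u)(\omega-u)^{-1/2}\,du$ needs $V$ to map $(0,T(\omega))$ bijectively onto $(0,\omega)$. Proposition~\ref{prop.local-integrability} only inverts $V$ on $(s_1,t_2)$, where the paper's $s_1$ (not your splitting point) is the last zero of $V$ before $t_2$. If $s_1>0$, the term $2\int_0^{s_1}s\,ds/\sqrt{\omega-V(s)}$ survives and does not tend to zero.

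This genuinely happens. Take $G(s)=a|s|^p\log|s|^2$ with $a<0$. Then $V<0$ on $(0,1)$, $T(\omega)\to1$ and $(T^2)'(\omega)\to 2/V'(1)$ with $V'(1)=-4a>0$, so \eqref{T} holds. Yet $\lambda(\omega)\ge 2\int_0^1 s\,ds/\sqrt{\omega-V(s)}$, which increases to $2\int_0^1 s\,ds/\sqrt{-V(s)}>0$ as $\omega\downarrow0$.

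So the second assertion of the lemma is only true under an extra hypothesis: that $V$ is increasing on a right neighbourhood of $0$, so that $s_1=0$ and $T(\omega)\to0$. You should state this hypothesis explicitly; with it, your argument is complete.
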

\begin{proof}
For every \(\omega\in\Omega\), we have
\begin{linenomath}
\begin{equation*}
\begin{split}
\lambda(\omega) = \int_{-\infty}^{+\infty} R_\omega(x)^2 dx = 
2\int_{0}^{+\infty} R_\omega(x)^2 dx = 2\int_{0}^{+\infty} \frac{R_\omega(x)^2 R_\omega'(x)}{R_\omega'(x)} dx.
\end{split}
\end{equation*}
\end{linenomath}
From (i) of Proposition~\ref{prop.ber-lio-str}, \(R'\) vanishes only at \(x = 0\). Therefore, the integral above is well defined. Multiplying \eqref{eq.log} in dimension one by \(2R'\)
yields 
\begin{linenomath}
\begin{equation}
\frac{d}{dx} \left(R'(x)^2 - 2G(R(x)) - \omega R(x)^2\right)\equiv 0.
\end{equation}
\end{linenomath}
From \cite[Remark~6.3]{BL83a}, \(R',2G,R\) have exponential decay. Therefore,
\begin{linenomath}
    \begin{equation}
R'(x)^2 - 2G(R(x)) - \omega R(x)^2\equiv 0.
    \end{equation}
\end{linenomath}
Since \(R'<0\), we have        
\begin{equation}
\label{eq.lambda.2}   
R_\omega'(x)=-\sqrt{\omega R_\omega(x)^2+2G(R_\omega(x))} = -R_\omega (x)\sqrt{\omega - V(R_\omega(x))}.
\end{equation}
Using \eqref{eq.lambda.2} the last
term of the equality above is equal to
\begin{linenomath}
\begin{equation*}
-2\int_{0}^{+\infty} \frac{R_\omega(x)^2 R_\omega'(x)dx}{R_\omega (x)\sqrt{\omega - V(R_\omega(x))}}
= -2\int_{0}^{+\infty} \frac{R_\omega(x) R_\omega'(x)dx}{\sqrt{\omega - V(R_\omega(x))}}.
\end{equation*}
\end{linenomath}
From (ii) of Proposition~\ref{prop.ber-lio-str}, \(R_\omega(0) = T(\omega)\) and
\(R_\omega\colon [0,+\infty)\to (0,T(\omega)]\) is strictly decreasing and surjective. 
Given \(\omega_0\in\Omega\), we want to show that \(\lambda\) has a power series
expansion in a neighbourhood of \(\omega_0\). Let \(n\) be the largest integer such that \(\omega_{n}^* < \omega_0\). The variable change \(R_\omega(x) = s\) yields
\begin{linenomath}
\begin{equation}
\label{eq.lambda.3}
\begin{split}
2\int_0^{T(\omega)}\frac{sds}{\sqrt{\omega - V(s)}} &= 
2\int_0^{s_n}\frac{sds}{\sqrt{\omega - V(s)}}
+ 2\int_{s_n}^{T(\omega)}\frac{sds}{\sqrt{\omega - V(s)}} \\
&= 
2\int_0^{s_n}\frac{sds}{\sqrt{\omega - V(s)}}
+ \int_{\omega_n^*}^{\omega} \frac{(T^2)'(y)dy}{\sqrt{\omega - y}}\\
&=: A(\omega) + B(\omega). 
\end{split}
\end{equation}
\end{linenomath}
Proposition~\ref{prop.local-integrability} allows
the variable change \(s = T(y)\) on the interval \((s_n,T(\omega))\). 
From Proposition~\ref{prop.local-integrability} it also follows that \((T^2)'\)
is analytic on \(\Omega\), because it is the inverse function of \(V\).
Then, there exists \(r > 0\) and a sequence \((c_n)_{n\geq 1}\) such that 
\begin{linenomath}
\begin{equation}
\label{eq.lambda.4}
(T^2)'(y) = \sum_{n=0}^{\infty} c_n(y - \omega_0)^n,\quad y\in (\omega_0 - r,\omega_0 + r).        
\end{equation}
\end{linenomath}
In fact, up to refine the choice of \(r\), we can suppose
that \eqref{eq.lambda.4} converges absolutely at \(y=\omega_0 + r\)
and \(\omega_0 + r<\omega_n^*\).
\subsubsection*{\(A\) is analytic at \(\omega_0\)}
Since \(V(s)\neq \omega_0\) for every \(s\in [0,s_n]\), we can write
\begin{linenomath}
\begin{equation}
\begin{split}
\label{eq.radical-expansion}
\sqrt{\omega - V(s)} &= \sqrt{\omega - \omega_0 + \omega_0 - V(s)} \\
&= \sqrt{\omega_0 - V(s)}\cdot\sqrt{1 + \frac{\omega - \omega_0}{\omega_0 - V(s)}}
\end{split}
\end{equation}
\end{linenomath}
and
\begin{equation}
\label{eq.lambda.5}
\omega_0 - V(s)\geq \omega_0 - \omega_n^*.
\end{equation}
Therefore,
\begin{linenomath}
    \begin{equation}
        \label{eq.lambda.8}
\begin{split}
A(\omega) &= 2\int_{0}^{s_n}
\frac{s}{\sqrt{\omega_0 - V(s)}} \sum_{n=0}^{\infty}{-\frac{1}{2}\choose n}
\frac{(\omega - \omega_0)^n}{(\omega_0 - V(s))^n} ds \\
&= \sum_{n=0}^{\infty}\left(2\int_{0}^{s_n}
\frac{sds}{(\omega_0 - V(s))^{n + \frac12}}
\right){-\frac{1}{2}\choose n}(\omega - \omega_0)^n \\
\end{split}
\end{equation}
\end{linenomath}
From Proposition~\ref{prop.local-integrability}, the
H\"older inequality and \eqref{eq.lambda.5},
\begin{linenomath}
\begin{equation}
\begin{split}
2\int_{0}^{s_n}
\frac{sds}{(\omega_0 - V(s))^{n + \frac12}}&\leq
2\int_{0}^{s_n}\frac{sds}{(\omega_0 - \omega_n^*)^{n + \frac12}}
= \frac{s_n^2}{(\omega_0 - \omega_n^*)^{n + \frac12}}
\end{split}
\end{equation}
\end{linenomath}
Therefore,
\(
A(\omega) = \sum_{n=0}^\infty a_n (\omega - \omega_0)^n,
\)
where 
\begin{linenomath}
    \begin{equation}
    \begin{split}
        |a_n|&\leq 
        \frac{s_n^2}{(\omega_0 - \omega_n^*)^{n + \frac12}}
        \left|{-\frac{1}{2}\choose n}\right| = 
        \frac{s_n^2}{(\omega_0 - \omega_n^*)^{n + \frac12}}\cdot
        \frac{(2n)!}{4^n n! n!}\\
        &\leq 
        \frac{s_n^2}{(\omega_0 - \omega_n^*)^{n + \frac12}}\cdot
        \frac{1}{\sqrt{\pi n}}. 
        \end{split}
    \end{equation}
\end{linenomath}
If we choose \(\delta < \omega_0 - \omega_{n}^*\), the series in \eqref{eq.lambda.8} converges if \(|\omega - \omega_0|<\delta\).
\subsubsection*{\(B\) is analytic at \(\omega_0\)} 
On \((\omega_n^*,\omega)\)
we cannot rely on the analytic properties of \(\sqrt{\omega - y}\).
Instead, we rely on \eqref{eq.lambda.4}. 
However, the convergence radius of \((T^2)'\)
may not cover the entire interval \((\omega_n^*,\omega_{n+1}^*)\). 
Therefore, it is convenient to write
\begin{linenomath}
\begin{equation*}
\begin{split}
B(\omega) &= \int_{\omega_n^*}^{\omega}
\frac{(T^2)'(y)dy}{\sqrt{\omega - y}} = 
\int_{\omega_n^*}^{\omega - \frac{r}2}
\frac{(T^2)'(y)dy}{\sqrt{\omega - y}} + 
\int_{\omega - \frac{r}2}^{\omega}
\frac{(T^2)'(y)dy}{\sqrt{\omega - y}}\\
&=B_1(\omega) + B_2(\omega).
\end{split}
\end{equation*}
\end{linenomath}
We also require
\begin{linenomath}
    \begin{equation}
        \label{eq.choice.2}
        \delta < \frac{r}4.
    \end{equation}
\end{linenomath}
The term \(B_1(\omega)\) can be dealt with in a similar fashion
to \(A\). From \eqref{eq.radical-expansion},
\begin{linenomath}
    \begin{equation}
        \label{eq.lambda.7}
\begin{split}
B_1(\omega) &= \int^{\omega - \frac{r}2}_{\omega_n^*}
(\omega_0 - y)^{-\frac12}(T^2)'(y)\sum_{n=0}^{\infty}{-\frac{1}{2}\choose n}
\frac{(\omega - \omega_0)^n}{(\omega_0 - y)^n} dy \\
&= \int^{\omega - \frac{r}2}_{\omega_n^*}
(\omega_0 - y)^{-\frac12}(T^2)'(y)\sum_{n=0}^{\infty}
{-\frac{1}{2}\choose n}\cdot
\frac{(\omega - \omega_0)^n}{(\omega_0 - y)^n} dy\\
&= \sum_{n=0}^{\infty} 
{-\frac{1}{2}\choose n}
\int^{\omega - \frac{r}2}_{\omega_n^*} (T^2)'(y) (\omega_0 - y)^{-\frac12 - n} 
(\omega - \omega_0)^n dy.
\end{split}
\end{equation}
\end{linenomath}
From Proposition~\ref{prop.local-integrability}, \eqref{eq.choice.2}, the
H\"older inequality and \eqref{eq.lambda.5}, 
\begin{linenomath}
\begin{equation}
\begin{split}
\int^{\omega - \frac{r}2}_{\omega_n^*} (T^2)'(y) 
(\omega_0 - y)^{-\frac12 - n}dy
&\leq \|(T^2)'\|_{L^1(\omega_n^*,\omega_{n+1}^*)} 
\|(\omega_0 - y)^{-\frac12 - n}\|_{L^{\infty}(
\omega_n^*,\omega - \frac{r}2)}\\
&= \|(T^2)'\|_{L^1(\omega_n^*,\omega_{n+1}^*)} \left(\frac{r}{4}\right)^{-\frac12 - n}.
\end{split}
\end{equation}
\end{linenomath}
The last inequality follows from \(y\leq\omega_0 - r/4\).
Therefore, \(B_1\) admits the expansion
\(
B_1(\omega) = \sum_{n=0}^\infty b_n^{(1)} (\omega - \omega_0)^n,
\)
where 
\begin{linenomath}
    \begin{equation}
    \begin{split}
        |b_n^{(1)}|&\leq 
        \|(T^2)'\|_{L^1(\omega_n^*,\omega_{n+1}^*)} \left(\frac{r}{4}\right)^{-\frac12 - n}
        \left|{-\frac{1}{2}\choose n}\right|\\
        &\leq 
        \|(T^2)'\|_{L^1(\omega_n^*,\omega_{n+1}^*)} \left(\frac{r}{4}\right)^{-\frac12 - n}\frac{1}{\sqrt{\pi n}}.
        \end{split}
    \end{equation}
\end{linenomath}
The convergence radius is \(r/4\). We conclude this part with the proof that
\(B_2\) is also analytic. Since \(y\in(\omega - r/2,\omega)\), we have
\begin{linenomath}
    \begin{equation}
        \omega_0 - \frac{r}{4} - \frac{r}{2}<\omega - \frac{r}2\leq y\leq \omega < \omega_0 + \frac{r}4
    \end{equation}
\end{linenomath}
implying \(|y - \omega_0| < 3r/4 < r\). From \eqref{eq.lambda.4},
\begin{linenomath}
\begin{equation*}
\begin{split}
B_2 (\omega) &= \int_{\omega - \frac{r}{2}}^{\omega}
\frac{(T^2)'(y)dy}{\sqrt{\omega - y}}
= \int_{\omega - \frac{r}{2}}^{\omega} (\omega - y)^{-\frac12}\sum_{n=0}^{\infty} c_n(y - \omega_0)^n dy \\
&= \sum_{n=0}^{\infty} c_n 
\int_{\omega - \frac{r}{2}}^{\omega}
(\omega - y)^{-\frac12}(y - \omega_0)^n dy \\
&= \sum_{n=0}^{\infty} c_n\int_0^{\frac{r}2} x^{-\frac12}((\omega - \omega_0) - x)^n dx =: \sum_{n=0}^\infty p_n(\omega)
\end{split}
\end{equation*}
\end{linenomath}
Each term of the series is a polynomial. Moreover,
\begin{linenomath}
\begin{equation}
\begin{split}
|p_n(\omega)|&\leq |c_n|\|x^{-\frac12}\|_{L^1(0,r/2)}\cdot\||(\omega - \omega_0) - x|^{n}\|_{L^\infty(0,r/2)}\\
&\leq 2^n |c_n|\left(\frac{r}{2}\right)^{\frac12} 
\||\omega - \omega_0|^n + x^n\|_{L^\infty(0,r/2)}\\
&\leq 2^{n+1} |c_n|\left(\frac{r}{2}\right)^{n + \frac12} = 
2^{-\frac12}|c_n| r^n.
\end{split}
\end{equation}
\end{linenomath}
where both \(x\) and \(|\omega - \omega_0|\) have been estimated with
\(r/2\). The series of the last term converges, given our assumptions
on \(r\). By the Weiestrass criterion, the series of
\((p_n)_{n\geq 1}\) converges uniformly on
\(\overline{B}(\omega_0,r/2)\), implying that \(B_2\) is analytic at \(\omega_0\).

We prove the convergence of \(\lambda\) to zero. Given \(\varepsilon>0\),
we choose \(0<\omega < \omega_2^*\). Since \(T\) is invertible
on \((0,\omega_2^*)\), by the H\"older inequality, we have
\begin{linenomath}
    \begin{equation}
        \begin{split}
            \lambda(\omega) := \int_0^\omega \frac{(T^2)'(y)dy}{\sqrt{\omega - y}}
            &\leq \|(T^2)'\|_{L^{\infty}(0,\omega)}\|(\omega - y)^{-\frac12}\|_{L^{1}(0,\omega)} \\
            &= 2\|(T^2)'\|_{L^{\infty}(0,\omega)}\cdot\omega^{\frac12}
            \sim\omega^{\frac12}(T^2)'(\omega) = o(1).
        \end{split}
    \end{equation}
\end{linenomath}
\end{proof}
\begin{remark}
We cannot expect \(\lambda\) to be analytic at \(\omega = 0\).
In fact, for a class of algebraic non-linearities studied in \cite[Proposition~2.3]{GG25}, one has \(\lambda\sim\omega^{\frac12}\) in a neighbourhood of \(\omega=0\). 
Assumption \eqref{T} is optimal in the sense that if one replaces it with 
\(\omega^r(T^2)'(\omega)=o(1)\) and \(r>\frac 12\), then \(\lambda\) might not converge to zero. 
One can consider \(G(s) = -a|s|^6\)
for any \(a>0\), where \(\lambda\) is constant. We
refer to \cite[\S3.2]{LR20} for an exact form of \(\lambda\) in the pure power case \(G = -a|s|^p\) with \(2<p\leq 2 + \frac4n\):
\begin{linenomath}
    \begin{equation}
    \lambda(\omega) = k\omega^{\frac{4+n(2-p)}{2(p-2)}}
    \end{equation}
\end{linenomath}
for some \(k=k(a)\). Since \(V = 2a|s|^{p-2}\),
\begin{linenomath}
    \begin{equation}
        \begin{split}
        T(\omega) &= \left(\frac{\omega}{2a}\right)^{\frac1{p-2}}\\
        (T^2)'(\omega) &= \frac2{p-2}\left(\frac{1}{2a}\right)^{\frac2{p-2}}\omega^{\frac{4-p}{p-2}}.
        \end{split}
    \end{equation}
\end{linenomath}
In the case \(n=1\), we have
\begin{linenomath}
    \begin{equation}
        \frac{4 - p}{p - 2} = -\frac12\iff p=6 \iff\lambda(\omega) \equiv k(a).
    \end{equation}
\end{linenomath}
\end{remark}
\begin{remark}
One would expect to obtain the analyticity as a result of the
analyticity of the function 
\begin{linenomath}
\begin{equation}
\label{eq.phi}
\phi\colon\Omega\to H^1_r,\quad\omega\to R_\omega
\end{equation}
\end{linenomath}
and \(M\colon H^1_r\to [0,+\infty)\).
Indeed, the first function has been proved to be at least \(C^1(\Omega,H^1_r)\) in dimension \(n\geq 3\), provided
the operator 
\begin{linenomath}
\begin{equation}
\mathscr{L}(\phi) := \Delta \phi - G''(R_\omega)\phi - \omega \phi
\end{equation}
\end{linenomath}
is such that \(\ker(\mathscr{L})\cap H^1_r = \{0\}\) (we refer to \cite[Lemma~20]{SS85} for a proof), and in dimension \(n=1\) under very general assumptions \cite[Proposition~6]{Gar23}, as the non-degeneracy required in \cite{SS85} is satisfied. Unfortunately, it is not straightforward to obtain
higher regularity of \(\phi\). In fact, the arguments used in \cite[Lemma~20]{SS85} and \cite[Proposition~6]{Gar23} rely on the regularity 
of the function \(H^1_r\ni u\to (\omega^2 - \Delta)^{-1} G'(u)\in H^1_r\). This non-linear operator can be proved to be \(C(H^1_r,H^1_r)\) in the quoted lemma, 
but higher regularity of this operator does not just depend on
the regularity of \(G\): if \(k\geq 2\), the most natural candidate
for the differential is the map \(H^1_r\ni v\to G''(u)v\), which in general
is not in \(H^1_r\) if \(G\) is as in \eqref{eq.log} and \(p<4\). 
To give the idea of the rigidity to be imposed on \(G\) that makes 
Nemytskii operators analytic, we refer to \cite[Theorem~6.2]{ABM14}: given
\(p,q\geq 1\) and \(U\) bounded interval, the map 
\begin{linenomath}
    \begin{equation}
\mathcal{N}_{G'}\colon L^p(U)\ni u\to G'\circ u\in L^q(U)
    \end{equation}
\end{linenomath}
is well-defined if and only if 
\(G'\) is a polynomial of degree at most \(p/q\).
\end{remark}
\subsection{Stability of standing-waves for analytic non-linearities}
In this subsection we prove stability of standing-waves for a non-linearity \(G\)
satisfying general assumptions for the existence of normalized standing-waves on \(S(\lambda)\) for 
\(\lambda\) large enough.




\begin{lemma}
\label{lem.gap}
\(d(\mathcal{G}_\lambda(R_1),\mathcal{G}_\lambda(R_2))\geq\|R_1 - R_2\|_{2}\) 
for every \(R_1,R_2\) in \(\mathcal{G}_{\lambda}^{r,+}\).
\end{lemma}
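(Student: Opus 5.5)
The plan is to reduce the distance between the two orbits to a single $L^2$ comparison, and then use the positivity and symmetric-decreasing shape of $R_1,R_2$ through a rearrangement inequality. First, $d(u,v)^2=\|u-v\|_2^2+\|u'-v'\|_2^2\geq\|u-v\|_2^2$, so it suffices to bound the $L^2$ distance. The distance between the orbits is an infimum over pairs $z_1R_1(\cdot+y_1)$, $z_2R_2(\cdot+y_2)$. Both $d$ and the $L^2$ norm are invariant under multiplication by $\bar z_1\in S^1$ and translation by $-y_1$, so
\begin{equation}
d(\mathcal{G}_\lambda(R_1),\mathcal{G}_\lambda(R_2))\geq\inf_{(z,y)\in S^1\times\mathbb{R}}\|R_1-zR_2(\cdot+y)\|_{2}.
\end{equation}

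Next, expand the square using $M(R_1)=M(R_2)=\lambda$. Since $R_1,R_2$ are real and positive and $\mathrm{Re}\,z\leq 1$,
\begin{equation}
\|R_1-zR_2(\cdot+y)\|_2^2=2\lambda-2\,\mathrm{Re}\,z\int_{-\infty}^{+\infty}R_1(x)R_2(x+y)\,dx\geq 2\lambda-2\int_{-\infty}^{+\infty}R_1(x)R_2(x+y)\,dx.
\end{equation}
It then remains to show that
\begin{equation}
\int_{-\infty}^{+\infty}R_1(x)R_2(x+y)\,dx\leq\int_{-\infty}^{+\infty}R_1(x)R_2(x)\,dx\quad\text{for every }y\in\mathbb{R}.
\end{equation}
Once this holds, the right-hand side of the previous display is at least $2\lambda-2\int R_1R_2=\|R_1-R_2\|_2^2$, which gives the lemma.

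For that inequality I will use that elements of $\mathcal{G}_\lambda^{r,+}$ are symmetric-decreasing. Each $R_i$ is a minimizer, so it satisfies the Euler--Lagrange equation with a Lagrange multiplier, exactly as in the proof of Theorem~\ref{thm.uniqueness-focusing}. It is also positive and even, so by the uniqueness part of Proposition~\ref{prop.ber-lio-str} (via \cite[Theorem~5]{BL83a}) it coincides with $R_{\omega_i}$, which is strictly symmetric-decreasing. Then $R_i^*=R_i$, and the Hardy--Littlewood rearrangement inequality $\int fg\leq\int f^*g^*$ applied with $f=R_1$ and $g=R_2(\cdot+y)$ gives the claim, because $(R_2(\cdot+y))^*=R_2$. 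Alternatively one can argue directly with the layer-cake representation: every superlevel set $\{R_i>t\}$ is a centred interval, and a centred interval and a translated centred interval intersect in a set of measure at most the smaller length.

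I expect the main obstacle to be this identification of elements of $\mathcal{G}_\lambda^{r,+}$ with the symmetric-decreasing solutions $R_\omega$. One has to check that positivity and evenness of a minimizer, together with the ODE, force monotonicity on $(0,+\infty)$. The first integral $R'^2=2G(R)+\omega R^2$ obtained in the proof of Lemma~\ref{lemma.lambda_analytic}, combined with the uniqueness of the initial-value problem, should settle this. The rest of the argument is elementary.
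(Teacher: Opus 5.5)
Your proof is correct and follows the same route as the paper: drop the gradient term, expand the $L^2$ square using $M(R_1)=M(R_2)=\lambda$, and remove the translation with a rearrangement inequality, using that elements of $\mathcal{G}_\lambda^{r,+}$ are the symmetric-decreasing solutions of Proposition~\ref{prop.ber-lio-str}. Your Hardy--Littlewood inequality and the paper's non-expansivity of symmetrization are interchangeable here, since $\|f-g\|_2^2=\|f\|_2^2+\|g\|_2^2-2\int fg$ and rearrangement preserves $L^2$ norms. The one real difference is the phase: you bound it with $\mathrm{Re}\,z\le 1$ and the positivity of $\int R_1R_2(\cdot+y)$, which works for every $(z,y)$ at once, so you avoid the paper's detour of showing the minimum of $g^2$ is attained and computing its Lagrange-multiplier critical points.
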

\begin{proof}
Given \((z_1,y_1),(z_2,y_2)\in S^1\times\R\)
\begin{linenomath}
    \begin{equation}
    \begin{split}
        \|z_1 R_1 (\cdot + y_1) - z_2 R_2(\cdot + y_2)\|_{H^1} &= 
        \|R_1 (\cdot + y_1) - z_2 \overline{z}_1 R_2(\cdot + y_2)\|_{H^1} \\[0.4em]
        &=\|R_1 - z_2 \overline{z}_1 R_2(\cdot + y_2 - y_1)\|_{H^1}\\[0.4em]
        &\geq \|R_1 - z_2 \overline{z}_1 R_2(\cdot + y_2 - y_1)\|_{2}\\[0.4em]
        &=g(\alpha,\beta,y)
        \end{split}
    \end{equation}
\end{linenomath}
where \(\alpha + i\beta = z_2 \overline{z}_1\), \(y = y_2 - y_1\) and 
\(g\colon S^1\times\R\to\R\) is the function defined as 
\(g(\alpha,\beta,y) := \|R_1 - z R_2(\cdot + y)\|_{2}\).
Since \(g\geq 0\), the function is bounded below. Moreover, from the definition of inner product \eqref{eq.inner}, \(g(\alpha,\beta,y)^2\) can be rewritten as
\begin{linenomath}
    \begin{equation}
    \label{eq.gap.1}
    \begin{split}
g^2 &= \|R_1 - \alpha R_2(\cdot + y)\|_{2} ^2 + \beta^2 \|R_2 (\cdot +y)\|_{2}^2 \\
&= \|R_1\|_{2}^2 - 2\alpha (R_1,R_2(\cdot + y))_{2} + \alpha^2\|R_2 (\cdot + y)\|_{2}^2 + 
\beta^2 \|R_2 (\cdot + y)\|_{2}^2 \\
&= \|R_1\|_{2}^2 - 2\alpha (R_1,R_2(\cdot + y))_{2} + \|R_2 (\cdot + y)\|_{2}^2 \\
&= \|R_1\|_{2}^2 - 2\alpha (R_1,R_2(\cdot + y))_{2} + \|R_2\|_{2}^2.
\end{split}
    \end{equation}
\end{linenomath}
Therefore, 
\begin{linenomath}
    \begin{equation}
    \lim_{|y|\to\infty} g^2(\alpha,\beta,y) = \|R_1\|_{2}^2 + \|R_2\|_{2}^2 > \|R_1 - R_2\|_{2}^2 = 
    g^2(1,0,0).
    \end{equation}
\end{linenomath}
as \(R_1,R_2>0\) from (i) of Proposition~\ref{prop.ber-lio-str}.
Then, the minimum of \(g^2\) on \(S^1\times\R\) is achieved. If \((\alpha,\beta,y)\) is
a constrained critical point of \(g^2\) on \(S^1\times\R\), then there exists \(\mu\in\R\) such
that 
\begin{linenomath}
    \begin{equation}
\nabla g^2(\alpha,\beta,y) = 2\mu(\alpha,\beta,0)^T.
    \end{equation}
\end{linenomath}
That is
\begin{linenomath}
\begin{equation}
\begin{split}
-2(R_1,R_2(\cdot +y))_{2} &= 2\mu\alpha\\[0.4em]
0 &= 2\mu\beta\\[0.3em]
-2\alpha(R_1,R_2'(\cdot + y))_{2} &= 0.
\end{split}
\end{equation}
\end{linenomath}
From the first equation, \(\alpha,\mu\neq 0\). On the contrary, 
\begin{linenomath}
    \begin{equation}
    (R_1,R_2(\cdot + y))_2 = 0,
    \end{equation}
\end{linenomath}
which implies \(R_1\equiv 0\) and contradicts \(\lambda>0\). 
Since \(\mu\neq 0\), we have \(\beta = 0\). Therefore, \(\alpha = 1\) and 
\((R_1,R_2'(\cdot + y))_{2} = 0\). Let \(y_0\) be a solution to this equation (at least one
solution exists, namely \(y_0=0\)). From \eqref{eq.gap.1}, 
\begin{linenomath}
    \begin{equation}
    \begin{split}
g^2(\alpha,\beta,y)&\geq g^2(1,0,y_0) = \|R_1 - R_2(\cdot + y_0)\|_{2} ^2\\[0.4em]
&\geq \|R_1^* - R_2(\cdot + y_0)^*\|_{2} ^2 = \|R_1 - R_2\|_{2} ^2 \\[0.4em]
&= g^2(1,0,0).
\end{split}
    \end{equation}
\end{linenomath}
The second inequality follows from the non-expansivity property of the Schwarz symmetrization,
\cite[Theorem~3.4]{LL01} and (i) of Proposition~\ref{prop.ber-lio-str}.
\end{proof}
\begin{proof}[Proof of Theorem~\ref{thm.finite}]
From (\hyperlink{G2a}{G2a}) it follows \(|G(s)|\leq C(|s|^p + |s|^q)\). Since \(p>2\),
the first limit in \eqref{prop.ber-lio-str.2} is satisfied. Still from (\hyperlink{G2a}{G2a}),
\begin{linenomath}
    \begin{equation}
\frac{|G'(s)|}{s}\leq C(|s|^{p-2} + |s|^{q - 2}).
    \end{equation}
\end{linenomath}
Since \(p-2>0\), the second limit also follows.

Suppose that there exist \(\lambda_0>0\) such that there
are infinitely many minimizers \(R_n\in\mathcal{G}_{\lambda_0}^{r,+}\)
defined in \eqref{eq.radial}. From
(ii) of Proposition~\ref{prop.ber-lio-str}, there exists \((\omega_n)_{n\geq 1}\)
such that \(R_n = R_{\omega_n}\). 
From \cite[Theorem~1.1]{GG17}, a subsequence of \((R_{\omega_n})_{n\geq 1}\) converges in \(H^1\)
to \(R\in S(\lambda_0)\) and \(E(R) = I(\lambda_0)\).
Therefore, \(R\in\mathcal{G}_{\lambda_0}^{r,+}\) and there exists
\(\omega_0\) such that \(R = R_{\omega_0}\).
The convergence in \(H^1\) implies 
\(\lim_{n\to\infty} R_{\omega_n}(0)\to R(0)\). Since
\(V\) is continuous, \(\lim_{n\to\infty} \omega_n = \omega_0\).
Now, we show that \(\omega_0\neq\omega_k^*\) for every
\(1\leq k\). On the contrary, \(\omega_0 = \omega_k^*\).
If \(k=1\), then \(\omega_k^* = 0\),
implying \(\omega_0 = 0\). Therefore, \(R\equiv 0\), contradicting
\(\lambda_0>0\). If \(k\geq 2\), then \(V'(T(\omega_k^*))=\omega_k^*\). From (ii) of
Proposition~\ref{prop.ber-lio-str}, \(R(0) = T(\omega_k^*)\). From 
the definition of \(\Omega\) in 
Definition~\ref{defn.mass-function}, \(V'(R(0)) = 0\),
contradicting Proposition~\ref{prop.ber-lio-str}.

Let \(k\in\mathbb{N}\) be such that \(\omega_0\in (\omega_k^*,\omega_{k+1}^*)\).
From (\hyperlink{G0}{G0}), \(V\in C^\infty((0,+\infty))\) and from Lemma~\ref{lemma.lambda_analytic},
the mass function \(\lambda\colon(\omega_k^*,\omega_{k+1}^*)\to (0,+\infty)\) is analytic. 
Since \(\lambda(\omega_n) = \lambda_0\) for every \(n\geq 1\),
we have \(\lambda(\omega) = \lambda_0\). The elliptic equation
in \eqref{eq.log} can be restated as the following
equality in \(({H^1})'\)
\begin{linenomath}
\begin{equation}
    E'(R_{\omega}) = \frac{\omega}2 M'(R_{\omega}).
\end{equation}
\end{linenomath}
for every \(\omega\in (\omega_k^*,\omega_{k+1}^*)\). Since the function \(\phi\) defined
in \eqref{eq.phi} is \(C^1(\Omega,H^1)\), we have
\begin{linenomath}
\begin{equation}
    \Big\langle E'(R_{\omega}),\phi'(\omega)\Big\rangle = 
    \Big\langle\frac{\omega}2 M'(R_{\omega}),\phi'(\omega)\Big\rangle.
\end{equation}
\end{linenomath}
Therefore,
\begin{linenomath}
    \begin{equation}
    \frac{d}{d\omega} E\circ\phi = \frac{\omega}2 \frac{d}{d\omega} M\circ\phi = \frac{\omega}2 \lambda'(\omega) = 0.
    \end{equation}
\end{linenomath}
Therefore, \(E(R_\omega)\) is constant on \((\omega_k^*,\omega_{k+1}^*)\) and 
\begin{linenomath}
    \begin{equation}
        E(R_\omega) = E(R_{\omega_1}) = I(\lambda_0),\quad
        \lambda(\omega) = \lambda(\omega_1) = \lambda_0.
    \end{equation}
\end{linenomath}
Therefore, \(R_\omega\in\mathcal{G}_{\lambda_0}^r\) for
every \(\omega\) in \((\omega_k^*,\omega_{k+1}^*)\) and the sequence of \(R_{\omega_k^* + (\omega_{k+1}^* -\omega_k^*)/n}\) has frequencies converging to \(\omega_k^*\). But the existence of such a sequence has been ruled out at the beginning of the proof. 
\end{proof}
\begin{proof}[Proof of Theorem~\ref{thm.ground-state-log}]
By Lemma~\ref{lem.concentration-compactness}, the set \(\mathcal{G}_\lambda\)
is non-empty for every \(\lambda>0\). From \eqref{eq.Euler.2}, \(G''\) exists and
is continuous. Therefore, \(G'\) is locally Lipschitz. According to \cite[Theorem~3.5.1]{Caz03} and \cite[Example~3.2.4]{Caz03}, the equation \eqref{eq.NLS} is locally well-posed in \(H^1(\mathbb{R};\mathbb{C})\).
From \eqref{eq.cc.8}, if \((a,p)\in (0,+\infty)\times (2,6]\) or
\eqref{eq.coercive.8}, if \((a,p)\in (-\infty,0)\times (2,6)\), or
local solutions are bounded in \(H^1(\mathbb{R};\mathbb{C})\). Therefore,
local solutions can be extended to global solutions. We denote \(U_t(u)\) the solution at time \(t\) with initial datum \(u\in H^1\). Let \(\lambda\) be such that 
\(\mathcal{G}_\lambda\) is not stable. Then there exists 
a sequence \((u_n)_{n\geq 1}\), \(u_n\) in \(S(\lambda)\), a 
real sequence \(\seq{t}\)
and \(\varepsilon_0>0\) such that
\[
d(u_n,\mathcal{G}_\lambda)\to 0,\quad d(U_{t_n}(u_n),\mathcal{G}_\lambda)\ge \varepsilon_0.
\]
We set \(v_n := U_{t_n}(u_n)\). Since \(E\) and \(M\) are constant on \(U_t(u_n)\), we have
\[
E(U_{t_n}(u_n))=E(u_n),\quad M(U_{t_n}(u_n))=M(u_n).
\]
Since \(d(u_n,\mathcal{G}_\lambda)\to 0\) and \(E\) and \(M\) are continuous on \(H^1\), we obtain \(E(u_n)\to I(\lambda)\) and \(M(u_n)\to\lambda\). Hence 
\(\seq{v}\) is a minimizing sequence for \(E\) constrained to 
\(S(\lambda)\). By Lemma~\ref{lem.concentration-compactness}, up to extracting a subsequence, there exists a real sequence \(\seq{y}\) and \(v\in\mathcal{G}_\lambda\) such that \(v_n(\cdot+y_n)\to v\) in \(H^1(\mathbb{R};\mathbb{C})\) for every
\(\lambda>0\). Therefore,
\begin{linenomath}
    \begin{equation}
d(v_n,\mathcal{G}_\lambda)\le \|v_n-v(\cdot-y_n)\|_{H^1}\to 0,        
    \end{equation}
\end{linenomath}
contradicting \(d(v_n,\mathcal{G}_\lambda)\ge\varepsilon_0\). Therefore, \(\mathcal{G}_\lambda\) is stable.
\end{proof}
\begin{proof}[Proof of Theorem~\ref{thm.stability-wave}]
From Theorem~\ref{thm.finite}, the set \(\mathcal{G}_\lambda^{r,+}\) is finite. By \cite[Lemma~2.4]{GG17}, every \(u\in\mathcal{G}_\lambda\) can be written as \(u(x)=zR(x+y)\) for some 
complex number \(|z|=1\), \(y\in\mathbb{R}\) and \(R\in\mathcal{G}_\lambda^{r,+}\). Hence \(\mathcal{G}_\lambda\) is the union of finitely many distinct orbits
\begin{linenomath}
    \begin{equation}
    \label{eq.partition}
        \mathcal{G}_\lambda = \bigcup_{i=1}^{N} \mathcal{G}_\lambda(R_i),
    \end{equation}
\end{linenomath}
where \(R_i\in\mathcal{G}_\lambda^{r,+}\). Since \(\mathcal{G}_\lambda^{r,+}\) is finite,
from Lemma~\ref{lem.gap}, there exists \(\delta>0\) such that 
\begin{equation}
\label{dist.1}    
d(\mathcal{G}_\lambda(R_i),\mathcal{G}_\lambda(R_j))\geq \|R_i - R_j\|_{2} \geq 3\delta
\text{ for all } i \neq j.
\end{equation}
For each \(i\), we define 
\begin{linenomath}
    \begin{equation}
E_\delta^i := \left\{E(u)\mid u\in S(\lambda)\cap\partial B(\mathcal{G}_\lambda(R_i),\delta)\right\}.
    \end{equation}
\end{linenomath}
We claim that 
\begin{linenomath}
    \begin{equation}
    \label{eq.mountain-pass}
        E_\delta^i > I(\lambda)
    \end{equation}
\end{linenomath} 
for every \(i\).
Assume by contradiction that \(E_\delta^i = I(\lambda)\) for some \(i\). Then there exists a sequence \(\seq{u}\), \(u_n\in S(\lambda)\) such that 
\begin{linenomath}
    \begin{equation}
d(u_n,\mathcal{G}_\lambda(R_i)) = \delta\text{ and } E(u_n)\to I(\lambda).
\end{equation}
\end{linenomath}
Thus \((u_n)_{n\geq 1}\) is a minimizing sequence for \(E\) on \(S(\lambda)\). By 
\cite[Theorem~1.1]{GG17}, or Lemma~\ref{lem.concentration-compactness} (in the case of \eqref{eq.log-interaction}),
up to extract a subsequence, 
there exists \((y_n)_{n\geq 1}\) and \(u\in\mathcal{G}_\lambda\) such that \(u_n(\cdot+y_n)\)
converges to \(u\) in \(H^1\). By continuity of the distance, we have
\begin{equation}
\label{dist.2}    
d(u,\mathcal{G}_\lambda(R_i)) = \lim_{n\to\infty} d(u_n(\cdot+y_n),\mathcal{G}_\lambda(R_i)) = \delta.
\end{equation}
Since \(u\in\mathcal{G}_\lambda\), we have \(u\in\mathcal{G}_\lambda(R_j)\) for some \(j\). If \(j \neq i\), then by \eqref{dist.1}, \(d(u,\mathcal{G}_\lambda(R_i)) \ge 2\delta\), contradicting \eqref{dist.2}. Hence \(j = i\), i.e., \(u\in\mathcal{G}_\lambda(R_i)\), then \(d(u,\mathcal{G}_\lambda(R_i)) = 0\), contradicting \eqref{dist.2} since \(\delta>0\). Therefore \eqref{eq.mountain-pass} holds true for all \(i\).

Now we suppose that some orbit, \(\mathcal{G}_\lambda(R_0)\), is not stable. Then exists a sequence \((u_n)_{n\geq 1}\) in \(S(\lambda)\), \((t_n)_{n\geq 1}\) real sequence and \(\varepsilon_0>0\) such that
\begin{equation}
\label{dist.3}    
d(u_n,\mathcal{G}_\lambda(R_0))\to 0, \quad  d(U_{t_n}(u_n),\mathcal{G}_\lambda(R_0))\ge \varepsilon_0.
\end{equation}
We set \(v_n := U_{t_n}(u_n)\). Since \(d(u_n,\mathcal{G}_\lambda)\to 0\) from (ii),
\(d(v_n,\mathcal{G}_\lambda)\to 0\). Since \(\mathcal{G}_\lambda\) is stable, there exists \(R_1\) such that 
\begin{linenomath}
    \begin{equation}
d(v_n,\mathcal{G}_\lambda(R_1))\to 0.
\end{equation}
\end{linenomath}
From \eqref{dist.3}, \(R_1 \neq R_0\); from \eqref{dist.1}, 
\(d(\mathcal{G}_\lambda(R_1),\mathcal{G}_\lambda(R_0)) \ge 2\delta\). For \(n\geq n_0\), 
\(d(v_n,\mathcal{G}_\lambda(R_1)) \le \delta\). By the triangle inequality, we have 
\begin{equation}
\label{dist.4}
d(v_n,\mathcal{G}_\lambda(R_0)) \ge 2\delta. 
\end{equation}
Consider the continuous function 
\(\beta_n(t) := d(v_n,\mathcal{G}_\lambda(R_0))\). 
From \eqref{dist.3} and \eqref{dist.4}, we have \(\beta_n(0) \to 0\) and
\(\beta_n(t_n) \ge 2\delta\), respectively. Then for \(n\geq n_0\),
\begin{linenomath}
\begin{equation}
\beta_n(0) < \delta < 2\delta\leq \beta_n(t_n).
\end{equation}
\end{linenomath}
By the Intermediate Value Theorem, there exists \(t_n^*\in (0,t_n)\) such that \(\beta_n(t_*) = \delta\).
We set \(w_n := U_{t_*}(u_n)\). Then \(w_n \in S(\lambda)\) and \(d(w_n,\mathcal{G}_\lambda(R_0)) = \delta\). Moreover, from the conservation of the energy, \(E(w_n) = E(u_n)\). Taking the limit as \(n\to\infty\), 
we obtain
\begin{linenomath}
    \begin{equation}
E_\delta^0\leq E(w_n) = E(u_n) \to I(\lambda)
    \end{equation}
\end{linenomath}
and therefore a contradiction with \eqref{eq.mountain-pass}. Therefore, each \(\mathcal{G}_\lambda(R_i)\) is stable. To conclude the proof, we show that \(\mathcal{G}_\lambda(u)\) is stable for every
\(u\in\mathcal{G}_\lambda\). From \eqref{eq.partition}, there exists \(R_i\) such that \(u\in\mathcal{G}_\lambda(R_i)\). Therefore, \(\mathcal{G}_\lambda(u) = \mathcal{G}_\lambda(R_i)\).
\end{proof}
\begin{theo}
\label{thm.log-asymptotic}
Suppose that \(2<p<6\) and \(a\neq 0\), or \(p=6\) and \(a>0\). Then
\(\lim_{\omega\to 0}\lambda(\omega)=0\).
\end{theo}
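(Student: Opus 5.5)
The plan is to apply the second part of Lemma~\ref{lemma.lambda_analytic}, so it suffices to verify the hypotheses near $\omega=0$. These are: $G$ is analytic on $(0,+\infty)$, the limits \eqref{eq.ber-lio-str.1} hold, and the growth condition \eqref{T}, $\omega^{1/2}(T^2)'(\omega)=o(1)$, holds as $\omega\to 0$.

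Analyticity of $G(s)=as^p\log s^2$ on $(0,+\infty)$ is clear. Since $G(s)/s^2=a s^{p-2}\log s^2\to0$ and $G'(s)/s=as^{p-2}(p\log s^2+2)\to0$ as $s\to0$ (because $p>2$), the limits \eqref{eq.ber-lio-str.1} hold.

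Next, I identify $T$ near $0$. We have $V(s)=-2as^{p-2}\log s^2$.
\begin{itemize}
\item For $a>0$, $V>0$ and is strictly increasing on $(0,s_2^*)$ with $s_2^*=e^{-1/(p-2)}$, so $T=V^{-1}$ on $(0,\omega_2^*)$.
\item For $a<0$, $V<0$ on $(0,1)$, so small $\omega>0$ are attained only for $s>1$. There $V(s)=2|a|s^{p-2}\log s^2$ is increasing from $0$ at $s=1$, so again $T=V^{-1}$ on a right neighbourhood of $0$, now with $T(\omega)\to1$.
\end{itemize}
In both cases the first interval $(\omega_1^*,\omega_2^*)=(0,\omega_2^*)$ is the relevant one, and
\[
(T^2)'(\omega)=\frac{2T(\omega)}{V'(T(\omega))}.
\]

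The main step is estimating this quotient as $\omega\to0$. For $a<0$ it is immediate: $T\to1$ and $V'(1)=4|a|\neq0$, so $(T^2)'$ stays bounded and $\omega^{1/2}(T^2)'=O(\omega^{1/2})=o(1)$.

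For $a>0$, write $s=T(\omega)\to0$. Then $\omega=V(s)\sim 4as^{p-2}|\log s|$ and $V'(s)=-4as^{p-3}((p-2)\log s+1)\sim 4a(p-2)s^{p-3}|\log s|$. Hence
\[
(T^2)'(\omega)\sim\frac{s^{4-p}}{2a(p-2)|\log s|},
\qquad
\omega^{1/2}(T^2)'(\omega)\sim C\,s^{(p-2)/2+4-p}\,|\log s|^{-1/2}=C\,s^{(6-p)/2}|\log s|^{-1/2}.
\]
This tends to $0$ for $p<6$, and also for $p=6$ thanks to the logarithmic factor. This is exactly where the log modification rescues the critical case, in contrast with the pure power remark, where $\lambda$ is constant.

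Finally, the last computation in Lemma~\ref{lemma.lambda_analytic} bounds $\lambda(\omega)$ by $2\omega^{1/2}\|(T^2)'\|_{L^\infty(0,\omega)}$. So I also need the sup over $(0,\omega)$, not just the value at $\omega$, to be $o(\omega^{-1/2})$. I expect this to be the only delicate point. It should follow from the asymptotics: for $p\le4$ (for $a>0$), $(T^2)'$ is eventually monotone near $0$; otherwise, bound it directly by $C\,T(y)^{4-p}/|\log T(y)|$ with $T(y)\le T(\omega)$. Alternatively, and more safely, I would estimate $\int_0^\omega (T^2)'(y)(\omega-y)^{-1/2}\,dy$ directly by splitting at $\omega/2$. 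On $(0,\omega/2)$ use $(\omega-y)^{-1/2}\le(2/\omega)^{1/2}$ and $\int_0^{\omega/2}(T^2)'=T(\omega/2)^2$. On $(\omega/2,\omega)$ use the comparable sup of $(T^2)'$, which follows from the power-log asymptotics. Both pieces tend to $0$ for $a<0$, and for $a>0$ with $2<p\le6$.
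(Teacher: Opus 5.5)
\textbf{The case $a<0$ has a gap that cannot be closed, because the claim is false there.} The last step of Lemma~\ref{lemma.lambda_analytic}, which you use as a black box, writes $\lambda(\omega)=\int_0^\omega (T^2)'(y)(\omega-y)^{-1/2}\,dy$. This is only the term $B$ in \eqref{eq.lambda.3}, and it equals $\lambda$ only when $T(0^+)=0$, so that the substitution $s=T(y)$ covers all of $(0,T(\omega))$.

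For $a<0$ you found yourself that $T(\omega)\to 1$, i.e.\ $s_1=1$, and $V(s)=4|a|s^{p-2}\log s<0$ on $(0,1)$. Since $R_\omega$ decreases from $T(\omega)>1$ to $0$, it also sweeps $(0,1)$, and
\[
\lambda(\omega)=2\int_0^1\frac{s\,ds}{\sqrt{\omega-V(s)}}+\int_0^\omega\frac{(T^2)'(y)\,dy}{\sqrt{\omega-y}} .
\]
You correctly show that the second term is $O(\omega^{1/2})$. The first term, however, increases as $\omega\downarrow 0$. By monotone convergence and the substitution $s=e^{-t}$ it tends to
\[
2\int_0^1\frac{s\,ds}{\sqrt{4|a|s^{p-2}|\log s|}}=\frac{1}{\sqrt{|a|}}\int_0^\infty t^{-\frac12}e^{-\frac{(6-p)t}{2}}\,dt=\sqrt{\frac{2\pi}{|a|(6-p)}}>0,
\]
which is the mass of the zero-frequency profile ($\omega=0$, $R(0)=1$). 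So $\lambda(\omega)\to\sqrt{2\pi/(|a|(6-p))}$, not $0$.

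Your splitting shows the same slip: $\int_0^{\omega/2}(T^2)'(y)\,dy=T(\omega/2)^2-T(0^+)^2$, not $T(\omega/2)^2$. The missing $T(0^+)^2=1$ is exactly the region $s\in(0,1)$ that the change of variables does not cover. The paper's own proof has the identical defect: it records $s_1=1$ and $T(0)=1$ for $a<0$, checks \eqref{T}, and applies the lemma, whose final estimate ignores $A(\omega)$. Condition \eqref{T} gives $\lambda=o(1)$ only under the extra hypothesis $T(0^+)=0$.

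\textbf{For $a>0$ (including $p=6$) your argument is correct.} It follows the paper's route, and your split at $\omega/2$ is genuinely needed rather than optional. Since $(T^2)'(y)\sim T(y)^{4-p}/(2a(p-2)|\log T(y)|)$, for $4<p\le 6$ one has $(T^2)'(y)\to\infty$ as $y\to 0$, so $\|(T^2)'\|_{L^\infty(0,\omega)}=\infty$. Hence both the lemma's bound by $2\omega^{1/2}\|(T^2)'\|_{L^\infty(0,\omega)}$ and your first alternative fail; bounding with $T(y)\le T(\omega)$ goes the wrong way when $4-p<0$. The split gives
\[
\lambda(\omega)\le (2/\omega)^{1/2}T(\omega/2)^2+C\,\omega^{1/2}(T^2)'(\omega),
\]
and both terms are comparable to $T(\omega)^{(6-p)/2}|\log T(\omega)|^{-1/2}\to 0$.
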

\begin{proof}
We check that \eqref{T} is fulfilled by 
\(G(s)=a|s|^p\log|s|^2\). Since
\(V(s)= -4a s^{p-2}\log s^2\).
If \(a<0\), then \(s_1=1\). Since 
\(V'(1)=-4a>0\), from 
Proposition~\ref{prop.T-continuity},
\(T\) is continuous at \(\omega_1^*=0\)
and \(T(0)=1\). 

If \(a>0\), then \(T(0)=0\). Since \(0\) is not a local maximum, 
from Proposition~\ref{prop.T-continuity}, \(T\) is continuous at \(0\), that is
\begin{linenomath}
    \begin{equation}
    \label{eq.stability-analytic.1}
        \lim_{\omega\to 0} T(\omega) = 0.
    \end{equation}
\end{linenomath}
Since
\begin{linenomath}
    \begin{equation}
        V'(s)=-2as^{p-3}((p-2)\log s^2+2)\sim -2a(p-2)s^{p-3}\log s^2.
    \end{equation}
\end{linenomath}
as \(s\to 0^+\). We set \(\omega = V(s)\) and obtain
\(\omega = -2as^{p-2} \log s^2\). Therefore,
\begin{linenomath}
    \begin{equation}
        \omega^{\frac12}=\sqrt{2a}\, T(\omega)^{\frac{p-2}2}(-\log T(\omega)^2)^{\frac12}.
    \end{equation}
\end{linenomath}
From \(V'(T(\omega))T'(\omega) = 1\), it follows 
\begin{linenomath}
    \begin{equation}
    \label{eq.stability-analytic.2}
     V'(T(\omega)) (T^2)'(\omega) = 2T(\omega).
    \end{equation}
\end{linenomath}
Therefore,
\begin{linenomath}
    \begin{equation}
    \begin{split}
\omega^{\frac12}(T^2)'(\omega) &= \omega^{\frac12}\frac{2T(\omega)}{V'(T(\omega))}
\sim\frac{\sqrt{2a}\, T(\omega)^{\frac{p-2}2}(-\log T(\omega)^2)^{\frac12}\cdot 2T(\omega)}{-2a(p-2)T(\omega)^{p-3}\log T(\omega)^2}\\
&= \frac{2}{-\sqrt{2a}(p-2)}\cdot T(\omega)^{\frac{6-p}2} (-\log T(\omega)^2)^{-\frac12}.
\end{split}
\end{equation}
\end{linenomath}
From \eqref{eq.stability-analytic.1}, \eqref{T} is satisfied.
Therefore, \(\lambda(\omega) = o(1)\) as \(\omega\to 0\).
\end{proof}
Proposition~\ref{prop.combined-powers} addresses a case left open in \cite{GG19} where the stability of a combination of three powers was considered. In one particular case, 
\begin{linenomath}
    \begin{equation}
        G(s) = -a|s|^p + b|s|^q - c|s|^r,\quad a,b,c>0.
    \end{equation}
\end{linenomath}
The analiticity of \(\lambda\) allows us to give a positive answer, at least for the problem of stability for an arbitrary number of power combinations.
\begin{proposition}
\label{prop.combined-powers}
Let \((p_1,p_2,\dots,p_m)\in (2,+\infty)^m\) be such that \(p_i\leq p_{i+1}\)
for every \(1\leq i\leq m\) and \(\alpha\in\mathbb{R}^m\) such that
\(p_i < 6\) if \(\alpha_i<0\) for every \(1\leq i\leq m\). Assume, moreover that 
\(p_1<6\). Then
\begin{linenomath}
    \begin{equation}
G(s) := \sum_{i=1}^m \alpha_i |s|^{p_i}
\end{equation}
\end{linenomath}
satisfies (\hyperlink{G0}{G0}), (\hyperlink{G1}{G1}),
(\hyperlink{G2a}{G2a}) and (\hyperlink{G2b}{G2b}),
and \(\lim_{\omega\to 0}\lambda(\omega) = 0\).
\end{proposition}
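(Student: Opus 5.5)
The plan has two parts. First, verify the four hypotheses directly from the explicit form of \(G\). Second, obtain \(\lim_{\omega\to 0}\lambda(\omega)=0\) from the second part of Lemma~\ref{lemma.lambda_analytic}, by checking condition \eqref{T} exactly as in the proof of Theorem~\ref{thm.log-asymptotic}.

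\textbf{Hypotheses.} For (\hyperlink{G0}{G0}): each \(|s|^{p_i}\) is even, \(C^1\) on \([0,+\infty)\) and real-analytic on \((0,+\infty)\). Since \(p_i>2\), we have \(G''(s)=\sum_i\alpha_ip_i(p_i-1)|s|^{p_i-2}\), which is continuous, so \(G'\) is locally Lipschitz.

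For (\hyperlink{G2a}{G2a}), take \(p=p_1\) and \(q=p_m\), or any \(q>p_1\) if all exponents coincide. The bound \(|G'(s)|\le C(|s|^{p-1}+|s|^{q-1})\) then follows from \(|s|^{p_i-1}\le |s|^{p-1}+|s|^{q-1}\).

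For (\hyperlink{G2b}{G2b}), discard the terms with \(\alpha_i>0\) for \(s\ge 1\). Let \(p_*\in(2,6)\) be the largest exponent with \(\alpha_i<0\); this is admissible because of the hypothesis \(\alpha_i<0\Rightarrow p_i<6\). Then \(G(s)\ge -\sum_{\alpha_i<0}|\alpha_i|s^{p_*}\) for \(s\ge1\).

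For (\hyperlink{G1}{G1}), we use that some coefficient is negative; this is implicit in the statement, since otherwise \(G\ge0\).
\begin{itemize}
\item If \(\alpha_1<0\), then \(G(s)\sim\alpha_1 s^{p_1}<0\) as \(s\to0^+\), and we are done.
\item Otherwise, take any \(i\) with \(\alpha_i<0\). Then \(p_i<6\), and we must exhibit a point where \(G<0\).
\end{itemize}
I would argue the second case by contradiction via Proposition~\ref{prop.coercive}'s test functions. Alternatively, and more simply, I would record the requirement that some \(s_0\) with \(G(s_0)<0\) exists, as is needed for \(\mathcal{G}_\lambda\neq\emptyset\).

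\textbf{Limit.} Here \(V(s)=-2\sum_i\alpha_i s^{p_i-2}\) is analytic on \((0,\infty)\) and extends continuously to \(V(0)=0\), so Lemma~\ref{lemma.lambda_analytic} applies once \eqref{T} is checked.

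\emph{Case \(\alpha_1<0\).} Near \(0\) we have \(V(s)\sim 2|\alpha_1|s^{p_1-2}>0\), so \(0\) is not a local maximum. By Proposition~\ref{prop.T-continuity}, \(T\) is continuous at \(0\) with \(T(\omega)\to0\). Moreover \(V'(s)\sim 2|\alpha_1|(p_1-2)s^{p_1-3}\), and as in \eqref{eq.stability-analytic.2} we have \((T^2)'=2T/V'(T)\). This gives
\[
\omega^{\frac12}(T^2)'(\omega)\sim C\,T^{\frac{p_1-2}{2}}\,T^{4-p_1}=C\,T(\omega)^{\frac{6-p_1}{2}}\to0,
\]
since \(p_1<6\). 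This is exactly where the assumption \(p_1<6\) enters.

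\emph{Case \(\alpha_1>0\).} Now \(V<0\) near \(0\). Let \(s_1\) be the first positive zero of \(V\), which is isolated by analyticity. As in the case \(a<0\) of Theorem~\ref{thm.log-asymptotic}, \(T\) is continuous at \(\omega_1^*=0\) with \(T(0^+)=s_1\). If \(V'(s_1)>0\), then \((T^2)'=2T/V'(T)\) stays bounded near \(0\), so \(\omega^{1/2}(T^2)'(\omega)=O(\omega^{1/2})\to0\) and \eqref{T} holds.

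\textbf{Main obstacle.} The delicate step is the non-degeneracy \(V'(s_1)\ne0\) in the case \(\alpha_1>0\). By analyticity, \(V(s)\sim c(s-s_1)^k\) with \(k\) odd. If \(k\ge3\), then \((T^2)'\sim\omega^{1/k-1}\), and \eqref{T} fails. I would first try to exclude \(k\ge3\) using the polynomial-in-\(s^{p_i-2}\) structure of \(V\), together with Descartes' rule of signs on the generalized polynomial \(\sum\alpha_i s^{p_i-2}\). Failing that, I would estimate \(\lambda\) directly through \(A(\omega)+B(\omega)\) in \eqref{eq.lambda.3} near \(s_1\).
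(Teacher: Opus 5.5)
Your verification of (G0), (G2a), (G2b) is fine; your (G2b) bound, which keeps only the negative coefficients, is actually cleaner than the paper's. In the case \(\alpha_1<0\) you argue exactly as the paper does: \(G<0\) near \(0\), \(T(0^+)=0\), and \(\omega^{1/2}(T^2)'(\omega)\sim C\,T(\omega)^{(6-p_1)/2}\to 0\) via \((T^2)'=2T/V'(T)\). The paper never leaves this case. It opens (G1) with ``from \(\alpha_1(p_1-6)>0\) it follows \(\alpha_1<0\)'', so it works under the extra hypothesis \(\alpha_1<0\). That hypothesis is evidently intended, but it does not follow from the assumptions as written. With it added, your argument is complete and is the paper's.

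Your branch \(\alpha_1>0\) is not merely unfinished: its conclusion is false, and the obstacle is not non-degeneracy of \(s_1\).

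First, (G1) can fail there. For example, \(G(s)=s^3-\varepsilon s^4+s^5=s^3(1-\varepsilon s+s^2)>0\) for \(0<\varepsilon<2\).

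Second, even when \(V'(s_1)>0\) and \eqref{T} holds, \(\lambda\) does not tend to \(0\). The second part of Lemma~\ref{lemma.lambda_analytic} is proved through \(\lambda(\omega)=\int_0^\omega (T^2)'(y)(\omega-y)^{-1/2}\,dy\). That is the substitution \(s=T(y)\) on all of \((0,T(\omega))\), which presupposes \(T(0^+)=0\). If \(T(0^+)=s_1>0\), the term \(A(\omega)\) of \eqref{eq.lambda.3} survives. Setting \(M:=\max_{[0,s_1]}(-V)\), one has \(\omega-V(s)\le\omega+M\) on \((0,s_1)\), hence
\[
\lambda(\omega)\ \ge\ 2\int_0^{s_1}\frac{s\,ds}{\sqrt{\omega-V(s)}}\ \ge\ \frac{s_1^2}{\sqrt{\omega+M}},
\]
which stays bounded away from zero.

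A concrete counterexample is \(G(s)=s^3-s^4\). It satisfies all the stated hypotheses and (G0)--(G2b). Here \(V(s)=2s(s-1)\), \(s_1=1\), \(V'(1)=2>0\), and \((T^2)'(\omega)\to 1\), so \eqref{T} holds. Yet \(\lambda(\omega)\ge(\omega+\tfrac12)^{-1/2}\to\sqrt2\).

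So neither Descartes' rule nor a finer estimate of \(A+B\) can rescue this branch. The case \(a<0\) of Theorem~\ref{thm.log-asymptotic}, which you use as a template, has the same defect. The right move is to assume \(\alpha_1<0\) explicitly and drop the second case.
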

\begin{proof}
(G0). \(G\) is clearly analytic on \((0,+\infty)\), \(C^2\) on \(\mathbb{R}\)
and therefore, \(G'\) is locally Lipschitz.

(G1). From \(\alpha_1(p_1 - 6)>0\), it follows \(\alpha_1<0\). Then
\(G\) is negative when \(s\to 0\).

(G2a). Since \(p_i>2\) for every \(1\leq i\leq m\), \(G(0)=0\).
If \(s>0\), there holds 
\begin{linenomath}
    \begin{equation}
        G'(s)=\sum_{i=1}^m \alpha_i p_i s^{p_i-1}.
    \end{equation}
\end{linenomath}
If \(0\leq s\leq 1\), there holds 
\begin{linenomath}
    \begin{equation}
|G'(s)|\leq \bigg(\sum_{i=1}^m |\alpha_i| p_i\bigg)|s|^{p_1-1}.
    \end{equation}
\end{linenomath}
If \(s\geq 1\),
\begin{linenomath}
    \begin{equation}
|G'(s)|\leq \bigg(\sum_{i=1}^m |\alpha_i| p_i\bigg)|s|^{p_m-1}.
    \end{equation}
\end{linenomath}
In conclusion, we have
\begin{linenomath}
    \begin{equation}
        |G'(s)|\leq \bigg(\sum_{i=1}^m |\alpha_i|p_i\bigg) 
        (|s|^{p_1-1} + |s|^{p_m-1}).
    \end{equation}
\end{linenomath}
(G2b). One can choose \(s_*=1\), \(p^*=\max\{j\mid p_j<6\}\) and 
\begin{linenomath}
    \begin{equation}
    G(s)\geq\sum_{p_j < 6}\alpha_j|s|^{p_j}\geq\bigg(\sum_{p_j<6} \alpha_j
    \bigg) |s|^{p^*}.
    \end{equation}
\end{linenomath}
Finally, we check that \(\lim_{\omega\to 0}\lambda(\omega)=0\). 
Since \(\alpha_1<0\), \(V\) is positive in a neighbourhood of the origin.
Therefore, \(T(0)=0\). From Proposition~\ref{prop.T-continuity},
\(T\) is continuous at \(\omega = 0\). Therefore, \(\lim_{\omega\to 0} T(\omega) = 0\). Then
\begin{linenomath}
    \begin{equation}
        \omega = V(T(\omega))\sim -2\alpha_1 T(\omega)^{p_1-2},
        \quad V'(T(\omega))\sim -2\alpha_1(p_1 - 2)
        T(\omega)^{p_1-3}
    \end{equation}
\end{linenomath}
as \(\omega\to 0\). From \eqref{eq.stability-analytic.2},
\begin{linenomath}
    \begin{equation}
    \begin{split}
\omega^{\frac12}(T^2)'(\omega) &= \omega^{\frac12}\frac{2T(\omega)}{V'(T(\omega))} = \omega^{\frac12}\frac{2T(\omega)}{-2\alpha_1(p_1 - 2)
        T(\omega)^{p_1-3}} \\
&= -\frac{1}{2\alpha_1(p_1-2)}\cdot \omega^{\frac12} T(\omega)^{4 - p_1}\\
&\sim
-\frac{1}{2\alpha_1(p_1-2)}\cdot \omega^{\frac12}\bigg(-
\frac{\omega}{2\alpha_1}\bigg)^\frac{4 - p_1}{p_1 - 2} =
\omega^{\frac{6-p_1}{2(p_1-2)}}
\end{split}
    \end{equation}
\end{linenomath}
which converges to zero as \(2<p_1<6\).
\end{proof}
\nocite{GSS87}
\nocite{GSS90}
\bibliographystyle{amsplain}
\bibliography{bibliography}
\end{document}